\newtheorem{thm}{Theorem}[section]
\newtheorem{lem}[thm]{Lemma}
\newtheorem{cor}[thm]{Corollary}
\newtheorem{defn}[thm]{Definition}
\theoremstyle{remark}
\newtheorem{rem}{Remark}[thm]
\newcommand*{\cl}[1]{\overline{#1}}
\def\a{\mathbf{a}}
\def\b{\mathbf{b}}
\newcommand{\ca}[2]{\Fab{\a}{#2}{#1}}
\newcommand{\cb}[2]{\Fab{\b}{#2}{#1}}
\def\w{\omega}
\newcommand{\cw}[1]{\Fa{\w}{#1}}
\newcommand{\csw}[1]{\Fa{\wh{\w}}{#1}}
\def\Blk{\Omega}
\def\BlkBd{\wh{\Omega}}
\def\wp{\omega}
\newcommand{\cwp}[2]{\Fabs{\wp}{#1}{#2}{\a, \b}}
\newcommand{\cwph}[2]{\Fab{\wp}{#1}{#2}}
\def\P{\bfP}
\def\E{\bfE}
\newcommand{\cP}{\P}
\newcommand{\cE}{\E}
\newcommand{\cPf}[1]{\Fbs{\P}{#1}{\a, \b}}
\newcommand{\csPf}[2]{\Fbs{\wh{\P}}{#1}{\a, \b, #2}}
\def \G{\mathrm G}
\newcommand{\cG}[1]{\Fa{\G}{#1}}
\newcommand{\cGi}[2]{\Fab{\G}{#1}{#2}}
\newcommand{\csG}[1]{\Fa{\wh{\G}}{#1}}
\newcommand{\csGi}[2]{\Fab{\wh{\G}}{#1}{#2}}
\def\H{\mathrm{H}}
\newcommand{\cH}[2]{\Fa{\H}{#1, #2}}
\newcommand{\cHp}[2]{\Fas{\H}{#1, #2}{\a, \b}}
\def\prt{\sigma}
\newcommand{\cprt}[2]{\Fa{\prt}{#1, #2}}
\newcommand{\cprtp}[2]{\Fas{\prt}{#1, #2}{\a, \b}}
\def\Flux{\mathrm{F}}
\newcommand{\cFlux}[2]{\Fa{\Flux}{#1, #2}}
\newcommand{\cFluxp}[2]{\Fas{\Flux}{#1, #2}{\a, \b}}
\def\lH{\mathrm{h}}
\newcommand{\clH}[2]{\Fas{\lH}{#1, #2}{\alpha, \beta, \aml, \bml}}
\def\lFlux{\mathrm{f}}
\newcommand{\clFlux}[2]{\Fas{\lFlux}{#1, #2}{\alpha, \beta, \aml, \bml}}
\def\HE{\wh{\mathrm H}}
\def\VE{\wh{\mathrm V}}
\newcommand{\cHE}[1]{\Fa{\HE}{#1}}
 \newcommand{\cHEi}[2]{\Fab{\HE}{#1}{#2}}
\newcommand{\cVE}[1]{\Fa{\VE}{#1}}
 \newcommand{\cVEi}[2]{\Fab{\VE}{#1}{#2}}
\def\I{\mathrm I}
\def\J{\mathrm J}
\newcommand{\csI}[1]{\Fa{\wh{\I}}{#1}}
\newcommand{\csJ}[1]{\Fa{\wh{\J}}{#1}}
\def \Gp{\mathrm G}
\newcommand{\cGp}[1]{\Fas{\Gp}{#1}{\a, \b}}
\newcommand{\csGp}[2]{\Fas{\wh{\Gp}}{#1}{\a, \b, #2}}
\def\EStShp{\mathrm M} 
\newcommand{\cEStShp}[2]{\Fas{\EStShp}{#1}{\a, \b, #2}}
\newcommand{\cEStShphs}[3]{\Fas{\EStShp}{#1}{\tau_{#3}\a, \b, #2}}
\newcommand{\cEStShpvs}[3]{\Fas{\EStShp}{#1}{\a, \tau_{#3}\b, #2}}
\def\EShp{\mathcal M}
\newcommand{\cEShp}[1]{\Fas{\EShp}{#1}{\a, \b}}
\newcommand{\cEShphs}[2]{\Fas{\EShp}{#1}{\tau_{#2}\a, \b}}
\newcommand{\cEShpvs}[2]{\Fas{\EShp}{#1}{\a, \tau_{#2}\b}}
\def \Iz {I}
\newcommand{\cIz}[1]{\Fbs{\Iz}{#1}{\a, \b}}
\def \aml {\mfa}
\def \bml {\mfb}
\def \ams{\mathfrak{A}}
\def \bms{\mathfrak{B}}
\def\Shp{\Gamma}
\def\shp{\gamma}
\newcommand{\cStShp}[2]{\Fabs{\Shp}{#1}{#2}{\alpha, \beta}}
\newcommand{\cShp}[1]{\Fas{\shp}{#1}{\alpha, \beta, \aml, \bml}}
\def\MinVar{\mathrm C}
\newcommand{\cMinVar}[1]{\Fas{\MinVar}{#1}{\bfa, \bfb}}
\newcommand{\cMinVarhs}[2]{\Fas{\MinVar}{#1}{\tau_{#2}\bfa, \bfb}}
\newcommand{\cMinVarvs}[2]{\Fas{\MinVar}{#1}{\bfa, \tau_{#2}\bfb}}
\def\MinDis{\Delta}
\newcommand{\cMinDis}[1]{\Fas{\MinDis}{#1}{\bfa, \bfb}}
\newcommand{\cMinDisvs}[2]{\Fas{\MinDis}{#1}{\bfa, \tau_{#2}\bfb}}
\def\MaMin{\pmb{\zeta}}
\newcommand{\cMaMin}[1]{\Fas{\MaMin}{#1}{\alpha, \beta, \aml, \bml}}
\def\LimShp{\mathcal{R}}
\def\LimShpBd{\mathcal{I}}
\newcommand{\cLimShp}{\Fs{\LimShp}{\alpha, \beta, \aml, \bml}}
\newcommand{\cLimShpBd}[1]{\Fs{\LimShpBd}{\alpha, \beta, #1}}
\def\A{\mathrm A}
\def\B{\mathrm B}
\newcommand{\cA}[1]{\Fas{\A}{#1}{\alpha}}
\newcommand{\cB}[1]{\Fas{\B}{#1}{\beta}}
\newcommand{\bbC}{\mathbb{C}}
\newcommand{\bbQ}{\mathbb{Q}}
\newcommand{\bbR}{\mathbb{R}}
\newcommand{\bbZ}{\mathbb{Z}}
\newcommand{\bfE}{\mathbf{E}}
\newcommand{\bfP}{\mathbf{P}}
\newcommand{\bfa}{\mathbf{a}}
\newcommand{\bfb}{\mathbf{b}}
\newcommand{\bfk}{\mathbf{k}}
\newcommand{\bfu}{\mathbf{u}}
\newcommand{\mfa}{\mathfrak{a}}
\newcommand{\mfb}{\mathfrak{b}}
\newcommand{\sC}{\mathcal{C}}
\newcommand{\sG}{\mathcal{G}}
\newcommand{\sM}{\mathcal{M}}
\newcommand{\sX}{\mathcal{X}}
\newcommand{\one}{\mathbf{1}}
\newcommand{\lc}{\lceil}
\newcommand{\rc}{\rceil}
\newcommand{\Var}{\mathbf{Var}}
\DeclareMathOperator\supp{supp}
\def\Exp{\mathrm {Exp}}
\def\dd{\mathrm d}
\def \I{\mathrm I}
\def \J{\mathrm J}
\def \T{\mathrm T}
\def\sG{\wh{\mathrm G}}
\definecolor{darkblue}{rgb}{.1,.1,.6}
\newcommand{\wt}[1]{\widetilde{#1}}
\newcommand{\wh}[1]{\widehat{#1}}
\newcommand{\Fabs}[4]
{
\ifthenelse{\isempty{#2}}
{
{#1}_{#3}^{#4}
}
{
{#1}_{#3}^{#4}(#2)
}
}
\newcommand{\Fab}[3]
{
\Fabs{#1}{#2}{#3}{}
}
\newcommand{\Fas}[3]
{
\Fabs{#1}{#2}{}{#3}
}
\newcommand{\Fbs}[3]
{
\Fabs{#1}{}{#2}{#3}
}
\newcommand{\Fa}[2]
{
\Fabs{#1}{#2}{}{}
}
\newcommand{\Fs}[2]
{
\Fabs{#1}{}{}{#2}
}
\def\EA{\mathrm A}
\def\EB{\mathrm B}
\newcommand{\cEA}[2]{\Fabs{\EA}{#1}{#2}{\bfa}}
\newcommand{\cEB}[2]{\Fabs{\EB}{#1}{#2}{\bfb}}
\def\EMin{\zeta}
\newcommand{\cEMin}[1]{\Fas{\EMin}{#1}{\bfa, \bfb}}
\newcommand{\cEMinhs}[2]{\Fas{\EMin}{#1}{\tau_{#2}\bfa, \bfb}}
\newcommand{\cEMinvs}[2]{\Fas{\EMin}{#1}{\bfa, \tau_{#2}\bfb}}
\newcommand{\cMiEh}[1]{\Fa{\MiEh}{#1}}
\newcommand{\cMiEv}[1]{\Fa{\MiEv}{#1}}
\def\MiEh{\wh{\mathrm{H}}}
\def\MiEv{\wh{\mathrm{V}}}
\def\HDis{\mathrm{d}_{\mathrm{H}}}
\newcommand{\cHDis}[2]{\Fa{\HDis}{#1, #2}}
\def\Clus{\mathrm{R}}
\newcommand{\cClus}[1]{\Fbs{\Clus}{#1}{\a, \b}}
\def\StCon{\mathcal{S}}
\newcommand{\cRc}{\Fs{\StCon}{\alpha, \beta, \aml, \bml}}
\newcommand{\cRh}{\Fs{\mathcal{H}}{\alpha, \beta, \aml, \bml}}
\newcommand{\cRv}{\Fs{\mathcal{V}}{\alpha, \beta, \aml, \bml}}
\def\Cat{\mathrm{C}}
\newcommand{\bCat}[2]{\Cat_{#1}(#2)}
\definecolor{darkgreen}{rgb}{.1,.6,.1}
\def\w{\omega} 
\def\Z{\mathbb Z} 
\newcommand{\be}{\begin{equation}}   
\newcommand{\ee}{\end{equation}}
\newcommandx{\note}[2][1=]{\todo[linecolor=yellow,backgroundcolor=yellow!25,bordercolor=yellow,#1]{#2}}
\title[Corner growth model]
{Flats, spikes and crevices: the evolving shape of the inhomogeneous corner growth model}
\author[E.~Emrah]{Elnur Emrah}
\address{Elnur Emrah\\ KTH Royal Institute of Technology \\ Department of Mathematics \\ SE-100 44 Stockholm\\ Sweden.}
\email{elnur@kth.se}
\urladdr{https://sites.google.com/view/elnur-emrah}
\thanks{E.\ Emrah was partially supported by the grant KAW 2015.0270 from the Knut and Alice Wallenberg Foundation and by the Mathematical Sciences Department at Carnegie Mellon University through a postdoctoral position.} 
\author[C.~Janjigian]{Christopher Janjigian}
\address{Christopher Janjigian\\ Purdue University\\  Mathematics Department\\  150 N. University Street
\\   West Lafayette, IN 47907\\ USA.}
\email{cjanjigi@purdue.edu}
\urladdr{http://www.math.purdue.edu/~cjanjigi}
\thanks{C.\ Janjigian was partially supported by National Science Foundation grant DMS-1954204 and by a postdoctoral grant from the Fondation Sciences Math\'ematiques de Paris while working at Universit\'e Paris Diderot.}
\author[T.~Sepp\"al\"ainen]{Timo Sepp\"al\"ainen}
\address{Timo Sepp\"al\"ainen\\ University of Wisconsin-Madison\\  Mathematics Department\\ Van Vleck Hall\\ 480 Lincoln Dr.\\   Madison WI 53706-1388\\ USA.}
\email{seppalai@math.wisc.edu}
\urladdr{http://www.math.wisc.edu/~seppalai}
\thanks{T.\ Sepp\"al\"ainen was partially supported by  National Science Foundation grant  DMS-1602486 and DMS-1854619, and by the Wisconsin Alumni Research Foundation.} 
\keywords{Corner growth model, limit shapes, last-passage percolation, TASEP, flux}
\subjclass[2000]{60K35, 60K37} 
\thanks{}
\date{\DTMnow. } 
\begin{document}
\begin{abstract}
We study the macroscopic evolution of the growing cluster in the exactly solvable corner growth model with independent exponentially distributed waiting times. The rates of the exponentials are given by an addivitely separable function of the site coordinates. When computing the growth process (last-passage times) at each site, the horizontal and vertical additive components of the rates are allowed to also vary respectively with the column and row number of that site. This setting includes several models of interest from the literature as special cases. 
Our main result provides simple explicit variational formulas for the a.s.\ first-order asymptotics of the growth process under a decay condition on the rates. Formulas of similar flavor were conjectured in \cite{rain-00}, which we also establish. Subject to further 
mild conditions, we prove the existence of the limit shape and describe it explicitly. We observe that the boundary of the limit shape can develop flat segments adjacent to the axes and spikes along the axes. Furthermore, we record the formation of persistent macroscopic spikes and crevices in the cluster that are nonetheless not visible in the limit shape. As an application of the results for the growth process, we compute the flux function and limiting particle profile for the TASEP with the step initial condition and disorder in the jump rates of particles and holes. Our methodology is based on concentration bounds and estimating the boundary exit probabilities of the geodesics in the increment-stationary version of the model, with the only input from integrable probability being the distributional invariance of the last-passage times under permutations of columns and rows.         
\end{abstract}

\maketitle



\section{Introduction}

\subsection{Some background and the contribution of the present work} \label{SsCont}

Stochastic growth far from equilibrium arises in models of diverse phenomena such as propagation of burning fronts, spread of infections, colonial growth of bacteria, liquid penetration 
into porous media and vehicular traffic flow \cite{halp-zhan-95, krug-spoh}. 
In these models, a growth process describes the time-evolution of a randomly growing cluster that represents, for example, a tissue of infected cells. Mathematical study of growth processes dates at least back to Eden's model \cite{eden-61}. A fundamental type of result in this subject is that the cluster associated with a given growth process acquires a deterministic \emph{limit shape} in a suitable scaling limit. Understanding the geometric properties of 
limit shapes has been one of the main research themes, for example, in percolation theory since at least the seminal work of D. Richardson \cite{rich73}. See this brief introduction \cite{damr-rass-sepp-16} and survey articles \cite{auff-damr-hans-16, damr-18, mart-06}. 

Of particular interest is to determine whether and in what manner local inhomogeneities in a growth model are manifested in the limit shape. This line of inquiry was pursued in both mathematics and physics literature with some early work in the 1990s, particularly on disordered exclusion and related growth processes \cite{benj-ferr-land, jano-lebo-92, jano-lebo-94, kand-muka-92, krug-ferr, schu-doma-93, sepp-krug, trip-barm-98, wolf-tang-90}. It has now been rigorously observed in various settings that suitably introduced inhomogeneity into the parameters of a growth model can create new geometric features in the limit shape including flat segments \cite{baha-bodi-18, barr-15, emra-16-ecp, ciec-geor-18, grav-trac-wido-02b, grav-trac-wido-02a, kniz-petr-saen-18, sepp-krug, sly-16}, spikes \cite{beff-sido-vares-10}, corners \cite{ciec-geor-18} and pyramids \cite{ahlb-damr-sido-16}.
Such features are often an indication of a \emph{phase transition} at the level of fluctuations of the growth process as observed, for example, in \cite{baik-bena-pech-05, barr-15, bena-corw, grav-trac-wido-02b, grav-trac-wido-02a, kniz-petr-saen-18}.

The present paper revisits the exactly solvable, inhomogeneous CGM from \cite{boro-pech, joha-08} that has independent and exponentially distributed waiting times with possibly distinct rates given by an addivitely separable function of the site coordinates. Hence, the inhomogeneity can be represented in terms of real parameters (namely the additive components of the rates) attached to the columns and rows. The model arises naturally in several contexts, including from the totally asymmetric simple exclusion process (TASEP) with the step initial condition and particlewise and holewise disorder. As elaborated on in Subsection \ref{SsiCGM}, we generalize the model slightly by computing the growth process at each site from a distinct collection of waiting times. With this enhancement, the model in particular  
unifies the following somewhat disparate settings from the literature. 
\begin{enumerate}[(\romannumeral1)]
\item Random rates from some work in the 1990s on TASEP with the step initial condition and particlewise disorder \cite{benj-ferr-land, krug-ferr, sepp-krug} and more recently in \cite{emra-16-ecp, emra-janj-17}. 
\item Macroscopically inhomogeneous rates as in \cite{cald-15, ciec-geor-18} in the special case that the speed function is additively separable. A discrete version of such a model (with geometrically distributed waiting times) appeared recently in \cite{kniz-petr-saen-18}.  
\item Fixed defective rates on the south or west boundaries as in \cite{bala-cato-sepp}, on a thickened west boundary (a few columns) as in \cite{baik-bena-pech-05} and, more generally, on thickened south and west boundaries (a few columns and rows) as in \cite{bena-corw}.
\item Suitably rescaled defective rates in a few columns and rows considered in \cite{bena-corw, boro-pech}. The discrete version of the model in \cite{bena-corw} appeared later in \cite{corw-liu-wang-16}.   
\item Defective rates near north or east boundaries used in \cite{sepp-cgm-18}. 
\item Growing rates as in \cite{joha-08}. 
\end{enumerate} 
(Precise connections to the above models are explained in the longer version of this article \cite[Subsection 3.9]{emra-janj-sepp-19-shp-long}). 

When the rates are identical, a well-known result of H.\ Rost \cite{rost} identifies the limit shape as a certain explicit parabolic region. 
In this homogeneous case, the limit shape completely governs the growth of the cluster to the leading order in time. The purpose of this work is to study the macroscopic evolution of the cluster in presence of inhomogeneity. 

We find that the inhomogeneity can influence the cluster qualitatively in two aspects, and the limit shape can fail to capture the full picture of growth at the macroscopic scale. We focus on the columns in the following discussion as analogous remarks hold also for the rows. First, when the smaller column parameters are sufficiently rare, the cluster evolves into an approximately flat shape near the vertical axis. This behavior creates a flat segment in the boundary of the limit shape adjacent to the vertical axis, and has been observed earlier in \cite{emra-16-ecp, sepp-krug}. Second, the cluster can grow at distinct speeds across columns leading to persistent macroscopic variations in its height profile. As a result, after a while the cluster visually resembles a structure with large \emph{crevices} and \emph{spikes}. 
Out of these features, the limit shape only remembers the maximal size of the spikes, and registers this information as a spike (line segment) along the vertical axis emanating from the vertical intercept of its boundary inside the quadrant. A systematic treatment of the formation of spikes and crevices in the CGM seems to be new.

In this paper, we describe the macroscopic evolution of the cluster and elucidate the growth behavior outlined above. 
Our main result provides the first-order asymptotics of the growth process in terms of an explicit variational formula under a mild condition ensuring at least linear growth. With further reasonable assumptions, the formula leads to an exact description of the limit shape, which controls the macroscopic growth asymptotically at sites increasingly away from both axes. The formula also gives the leading order growth of the cluster along a fixed set of columns or rows. This information is, in general, not encoded in the limit shape. 

As an application of the results for the CGM, we also describe the macroscopic evolution of the particles in the associated disordered TASEP. In particular, we derive the flux function and limiting particle profile from the limit shape. 
Subsection \ref{SsResDis} provides a more detailed account of our results.

It has come to our attention that a statement somewhat similar to our main result was conjectured by E.\ Rains in \cite[Conjecture 5.2]{rain-00}, 
 which also contains analogous claims for various other integrable percolation models. Although peripheral to the present work, we reformulate and prove the part of the conjecture pertinent to our setting to highlight the connection.  

A sequel \cite{emra-janj-sepp-19-buse} to the present  paper will study geometric features such as Busemann limits, geodesics, and the competition interface in the inhomogeneous  LPP model, and also apply these results to an inhomogeneous tandem of queues.  

There is also some technical novelty in the treatment of the model. In the present setting, the existence of the limit shape does not follow from standard subadditive arguments and takes up a significant part of the paper to establish. 
We attain this through concentration bounds for the growth process, development of which utilizes explicit increment-stationary versions of the process. 

As proved in \cite{boro-pech}, the CGM studied here is connected to Schur measures \cite{okou-01} and thereby possesses a determinantal structure. In particular, the one-point distribution of the growth process can be written in terms of a Fredholm determinant with an explicit kernel. We take advantage of one feature that follows from this representation, namely, the distributional invariance of the growth process under permutations of columns and rows (stated in Lemma \ref{LGDisPerInv}). Apart from this point, our methodology (described in Section \ref{SsMeth}) does not rely on integrable probability.  

\subsection{Limit shape in the CGM}
\label{SsLimShpCGM}

The general two-dimensional CGM consists of a given collection of nonnegative real-valued random waiting times $\{\omega(i, j): i, j \in \bbZ_{>0}\}$ and a corner growth process $\{\G(i, j): i, j \in \bbZ_{>0}\}$ defined through the recursion  
\begin{align}
\label{EGP}
\G(i, j) = \max \{\one_{\{i > 1\}}\G(i-1, j), \one_{\{j > 1\}}\G(i, j-1)\} + \omega(i, j) \quad \text{ for } i, j \in \bbZ_{>0}. 
\end{align}
This process represents a randomly growing cluster in the first  quadrant of the plane, given at time $t \in \bbR_{\ge 0}$ by 
\begin{align}
\label{EClus}
\Clus(t) = \{(x, y) \in \bbR_{>0}^2: \G(\lc x \rc, \lc y \rc) \le t\}. 
\end{align}
In other words, the unit square $(i-1, i] \times (j-1, j]$ is added to the cluster at time $t = \G(i, j)$ for $i, j \in \bbZ_{>0}$. The closure of \eqref{EClus} in $\bbR_{\ge 0}^2$ is given by 
\begin{align*}
\overline{\Clus(t)} = \{(x, y) \in \bbR_{\ge 0}^2: \G(\lc x \rc + \one_{\{x = 0\}}, \lc y \rc + \one_{\{y = 0\}}) \le t\} \quad \text{ for } t \in \bbR_{\ge 0}.  
\end{align*}
The limit shape of the cluster is defined as the limiting set 
\begin{align}
\label{ELimShpDef}
\LimShp = \lim_{t \rightarrow \infty} t^{-1}\cl{\Clus(t)}, 
\end{align}
with respect to the Hausdorff metric (on nonempty, closed, bounded subsets of $\bbR^2_{\ge 0}$, see Appendix \ref{SsHaus}) provided that the limit exists\footnote{The definition of the cluster in some literature can differ slightly from \eqref{EClus}. For example in \cite{mart-04}, the growth process lives on $\bbZ_{\ge 0}^2$ and the cluster at a given time is defined as a closed subset of $\bbR_{\ge 0}^2$ with the floor function instead of the ceiling function. These variations do not have any impact on the limit shape.}. The definition will be slighly modified in Subsection \ref{SsLimShp} via suitable truncation in the case of superlinear growth in time. 

The first instance of the CGM in the literature had i.i.d.\ exponential waiting times and appeared in connection with a fundamental interacting particle system, TASEP, in a pioneering work \cite{rost} of H.\ Rost. 
Recall that the standard TASEP \cite{spit-70} is a continuous-time Markov process on particle configurations on $\bbZ$ that permit at most one particle per site (exclusion), and evolves as follows: Each particle independently attempts to jump at a common rate $c > 0$ to the next site to its right. Per the exclusion rule, the jump is allowed only if the next site is vacant. The dynamics is unambiguously defined since simultaneous jump attempts a.s.\ never happen. To connect with the CGM, start the TASEP from the step initial condition meaning that the particles occupy the sites of $\bbZ_{\le 0}$ at time zero. Label the particles with positive integers from right to left such that particle $j$ is initially at site $-j+1$ for $j \in \bbZ_{>0}$. Let $\T(i, j)$ denote the time of the $i$th jump of particle $j$, and write  
\begin{align}
\omega'(i, j) = \T(i, j) - \max \{\one_{\{i > 1\}}\T(i-1, j), \one_{\{j > 1\}}\T(i, j-1)\} \quad \text{  for } i, j \in \bbZ_{>0}. \label{ETP}
\end{align} 
By the strong Markov property, $\omega'(i, j) \sim \Exp(c)$ and are jointly independent for $i, j \in \bbZ_{>0}$. Then, since the recursions in \eqref{EGP} and \eqref{ETP} are the same, the $\T$-process is equal in distribution to the $\G$-process defined with i.i.d.\ $\Exp(c)$-distributed waiting times. 

A celebrated result in \cite{rost}, based on the above correspondence with TASEP, identifies the limit shape of the CGM with i.i.d.\ Exp$(c)$ waiting times as the parabolic region given by 
\begin{align}
\label{ERostLimShp}
\LimShp = \{(x, y) \in \bbR_{\ge 0}^2:  \sqrt{x}+\sqrt{y} \le \sqrt{c}\}. 
\end{align}
If the waiting times are i.i.d.\ and geometrically distributed, $\LimShp$ is also explicitly known as a certain elliptic region \cite{cohn-elki-prop-96, jock-prop-shor-98, sepp98mprf}. More generally, for i.i.d.\ waiting times subject to mild conditions, the limit in \eqref{ELimShpDef} still exists and is a concave region with the boundary inside $\bbR_{>0}^2$ extending continuously to the axes \cite{mart-04}. Furthermore, the limit can be characterized in terms of variational formulas over certain infinite dimensional spaces \cite{geor-rass-sepp-16}. 
However, these formulas presently do not yield detailed geometric information about the limit shape except in the above exactly solvable cases. For example, it is unclear precisely when the limit shape has flat segments in the boundary. If the waiting times attain their maxima frequently enough to create an infinite cluster of oriented percolation, the boundary of the limit shape becomes flat in a cone symmetric around the diagonal of the plane \cite{geor-rass-sepp-16}. In the context of 
undirected first-passage percolation, 
 this phenomenon goes back to the classic paper of R.\ Durrett and T.\ Liggett \cite{durr-ligg-81}, and was subsequently studied in \cite{auff-damr-13-ptrf, marc-02}. It is not known whether this is the only mechanism to produce flat segments with i.i.d.\ waiting times. Due to the limited knowledge in the general i.i.d.\ case, a natural starting point as a homogeneous setting for our study into the effects of inhomogeneity is the i.i.d.\ exponential model. 

\subsection{Simulations of flat segments, spikes and crevices}
\label{SsSim}

Varying the rates of the exponential waiting times can create flat spots, spikes and crevices in the evolving shape of the cluster. Let us illustrate these features through some simulations of the CGM deferring their further discussion to Subsection \ref{SsResDis}. 
  
The simulations below share a common sample of independent $\Exp(1)$-distributed waiting times $\{\w(i, j): i, j \in [N]\}$ where $N = 4000$. Each simulation constructs waiting times with specific rates $\lambda_{m, n}(i, j) > 0$ by setting
\begin{align*}
\w_{m, n}(i, j) = \frac{\w(i, j)}{\lambda_{m, n}(i, j)} \sim \Exp(\lambda_{m, n}(i, j)) \quad \text{ for } m, n \in [N], i \in [m], j \in [n]. 
\end{align*}
The value $\G(m, n)$ of the growth process at each site $(m, n) \in [N]^2$ is then computed through \eqref{EGP} with $\w_{m, n}(i, j)$ in place of $\w(i, j)$ for $i \in [m]$, $j \in [n]$. Finally, the cluster $\Clus(t)$ is computed from \eqref{EClus} at time $t = 1000$.  

Figure \ref{FSim} depicts a realization of $\Clus(t)$ together with the boundary of the limit shape approximation $t\LimShp$ in four cases. For comparison, Figure \ref{FRost} covers the homogeneous case where the rates are $1$ and $\LimShp$ is given by \eqref{ERostLimShp} with $c = 1$. In the remaining cases, $\LimShp$ is the subset of 
$\bbR_{\ge 0}^2$ given by 
\begin{align}
\LimShp &= \{x \ge y \text{ and } \sqrt{x} + \sqrt{y} \le 1\} \cup \{x \le y \text{ and } 2(x+y) \le 1\} \cup \{(0, y):  1/2 \le y \le 1\}. \label{ELimShpEg}
\end{align}

\begin{figure}
\begin{subfigure}[t]{.5\textwidth}
  \centering
  \includegraphics[scale=0.6]{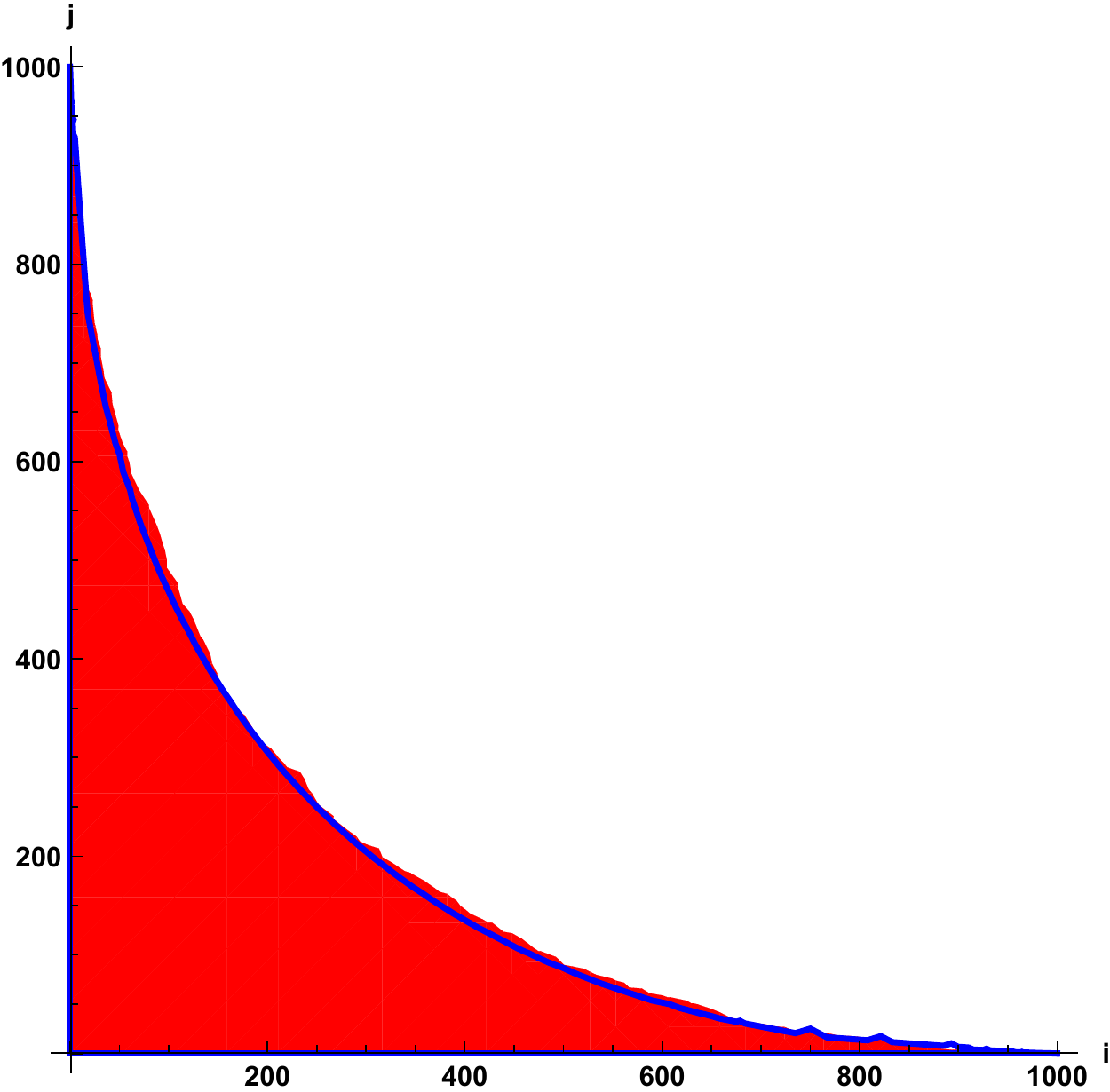}
  \caption{$\lambda_{m, n}(i, j) = 1$.}
  \label{FRost}
\end{subfigure}%
\begin{subfigure}[t]{.5\textwidth}
  \centering
  \includegraphics[scale=0.6]{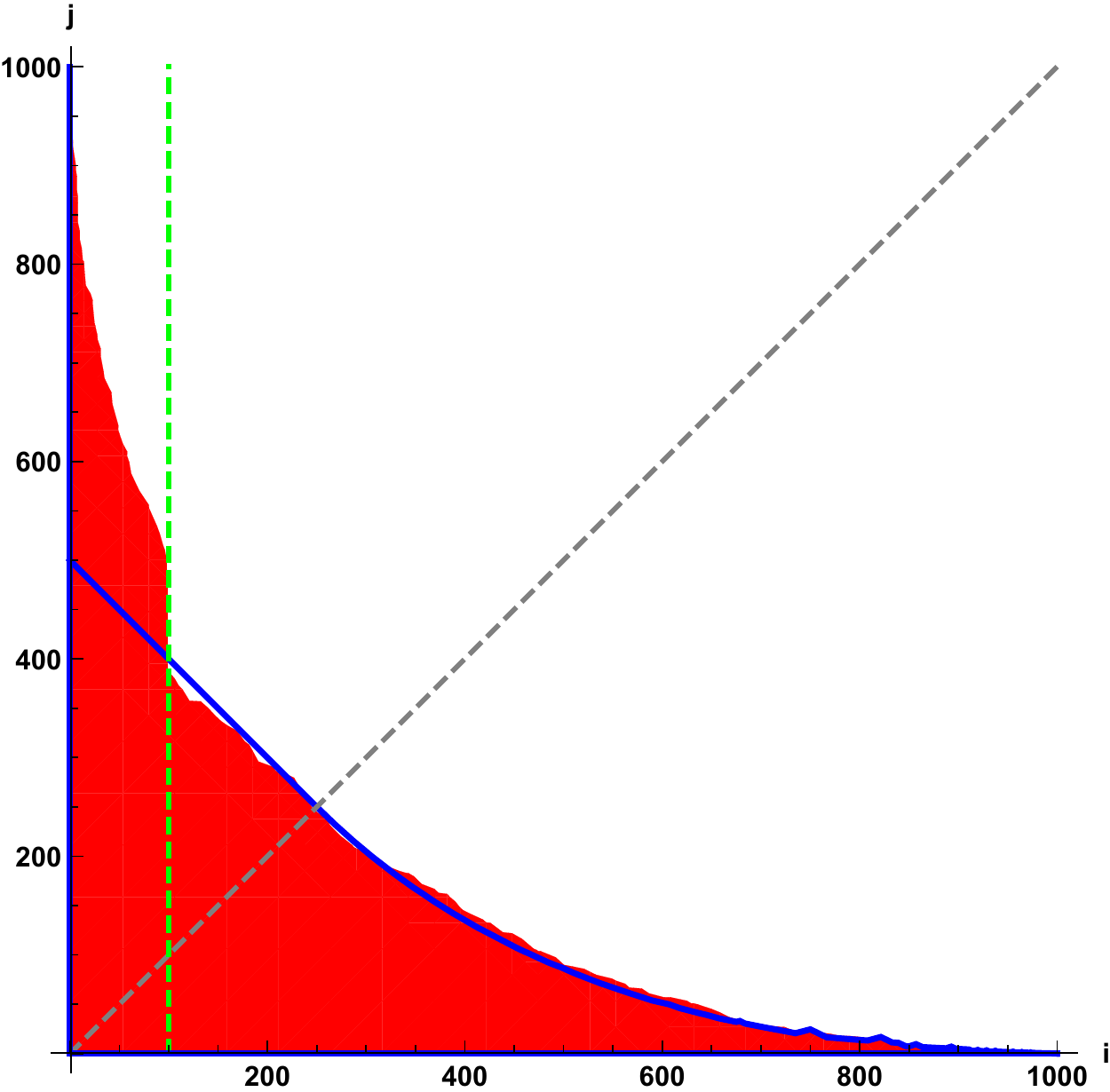}
  \caption{$\lambda_{m, n}(i, j) = \begin{cases}0.5 \quad &\text{ if } i = 100 \\ 1 \quad &\text{ otherwise. }\end{cases}$}
  \label{FFlatSpike}
\end{subfigure}
\begin{subfigure}[t]{.5\textwidth}
  \centering
  \includegraphics[scale=0.6]{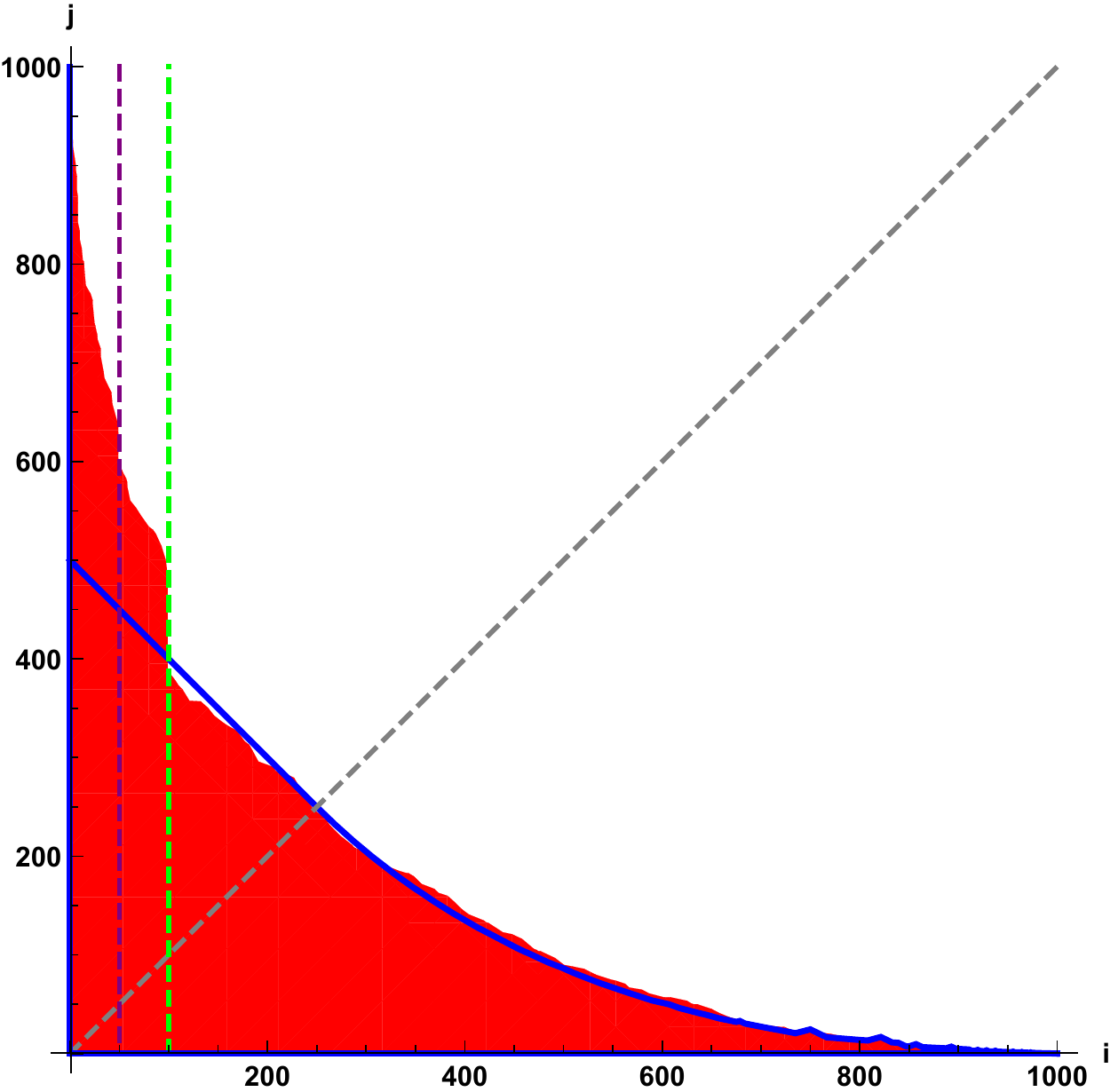}
  \caption{$\lambda_{m, n}(i, j) = \begin{cases}0.75 \quad &\text{ if } i = 50 \\ 0.50 \quad &\text{ if } i = 100 \\ 1 \quad &\text{ otherwise. }\end{cases}$}
  \label{FSpike2}
\end{subfigure}%
\begin{subfigure}[t]{.55\textwidth}
  \centering
  \includegraphics[scale=0.6]{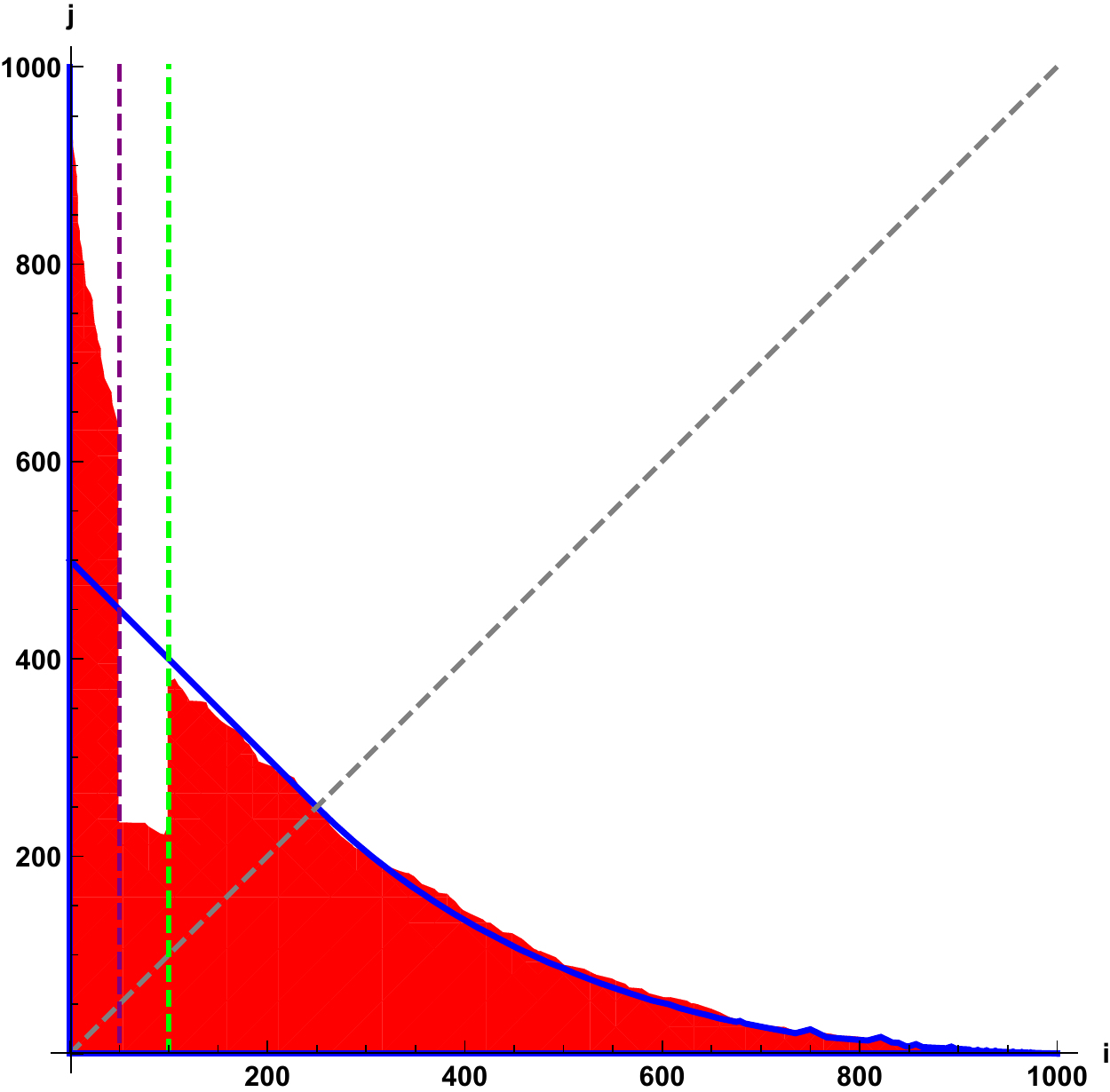}
  \caption{$\lambda_{m, n}(i, j) = \begin{cases}0.25 \quad &\text{ if } i = 50 \text{ and } m < 100 \\ 0.5 \quad &\text{ if } i = 100 \text{ and } m \ge 100 \\ 1 \quad &\text{ otherwise. }\end{cases}$}
  \label{FCrevice}
\end{subfigure}%
\caption{The cluster $\Clus(t)$ (red) and the boundary of the region $t\LimShp$ (blue) at time $t = 1000$ in four simulations of CGM with indicated rates. (a) Homogeneous case. $\LimShp$ is given by \eqref{ERostLimShp} with $c = 1$. (b) Flat spot above the diagonal (dashed gray) to the right of column $100$ (dashed green) and spikes to the left of column $100$. $\LimShp$ is given by \eqref{ELimShpEg} in this and subsequent cases. (c) Larger spikes to the left of column $50$ (dashed purple) and smaller spikes between columns $50$ and $100$. (d) Crevices between columns $50$ and $100$. }
\label{FSim}
\end{figure}

\subsection{Exponential CGM with inhomogeneous rates}
\label{SsiCGM}

In the basic version of our setting, $\cw{i, j}{} \sim \Exp(a_i+b_j)$ for $i, j \in \bbZ_{>0}$ for some real parameter sequences $\a = (a_i)_{i \in \Z_{>0}}$ and $\b = (b_j)_{j \in \Z_{>0}}$. To have positive rates the parameters are assumed to satisfy $a_i + b_j > 0$ for $i, j \in \bbZ_{>0}$. The earliest appearances of this CGM were perhaps in \cite{boro-pech, joha-08}. The model arises via a limit transition from the CGM considered earlier in \cite{joha00b}. The latter has independent geometric waiting times with multiplicatively separable inhomogeneity in fail parameters and comes from a Schur measure \cite{okou-01}. As proved in \cite{boro-pech}[Theorem 1], the present model is closely linked to the complex Wishart ensemble 
(also known as the Laguerre ensemble) in the sense that the square root of the largest singular value of a natural generalization of an $m \times n$ sized realization of this ensemble has the same distribution as $\G(m, n)$ for $m, n \in \bbZ_{>0}$. This correspondence was observed earlier in \cite[Proposition 6.1]{baik-bena-pech-05} when $\a$ or $\b$ is a constant sequence, and generalized later to the process level in \cite{diek-warr-09}. 

The model can also be naturally motivated as a TASEP with the step initial condition, and particlewise and holewise disorder. The disorder in rates translates to the feature that the attempts for the $i$th jump of particle $j$ occur at rate $a_i+b_j$ for $i, j \in \bbZ_{>0}$. For an alternative viewpoint, imagine the holes (empty sites) as another class of particles labeled with positive integers such that hole $i$ is at site $i$ for $i \in \bbZ_{>0}$ at time zero. Hole $i$ moves by exchanging positions with the particle to its immediate left at rate $a_i$ and particle $j$ moves by doing the same with the holes to its immediate right at rate $b_j$ for $i, j \in \bbZ_{>0}$. Hence, when they encounter, hole $i$ and particle $j$ exchange positions at net rate $a_i+b_j$, and this exchange is precisely the $i$th jump of particle $j$ for $i, j \in \bbZ_{>0}$. 

In this paper, we consider the following slightly more general setting that will permit us to simultaneously treat the models alluded to in items  (\romannumeral1)-(\romannumeral6) in Section \ref{SsCont}. 
Fix two collections of real parameters 
\begin{align}\a = \{\ca{m}{i}: m \in \bbZ_{>0} \text{ and } i \in [m]\} \quad \text{ and } \quad \b = \{\cb{n}{j}: n \in \bbZ_{>0} \text{ and } j \in [n]\}.\label{EPar}\end{align} 
Abbreviate $\ca{m}{} = (\ca{m}{i})_{i \in [m]}$ and $\cb{n}{} = (\cb{n}{j})_{j \in [n]}$ for $m, n \in \bbZ_{>0}$. Assume that 
\begin{align}
\ca{m}{i}+\cb{n}{j} > 0 \quad \text{ for } m, n \in \bbZ_{>0} \text{ and } i \in [m], j \in [n]. \label{AsmParam}
\end{align} 
For each $m, n \in \bbZ_{>0}$, let 
\begin{align}
\cwp{i, j}{m, n} \sim \Exp(\a_{m}(i) + \b_{n}(j)) \text{ and jointly independent for } i \in [m] \text{ and } j \in [n], \label{EWP}
\end{align} 
and define $\cGp{m, n}$ from these waiting times by \eqref{EGP}. Then, for $t \in \bbR_{\ge 0}$, define the region $\cClus{t}$ from the growth process $\{\cGp{m, n}: m, n \in \bbZ_{>0}\}$ by \eqref{EClus}. A few points worth emphasizing: The $\cGp{}$-process itself does not necessarily satisfy the recursion \eqref{EGP} because the waiting times in \eqref{EWP} are allowed to vary with $m, n$. By the same token, $\cGp{}$-process need not be coordinatewise nondecreasing. Therefore, $\cClus{t}$ may no longer be a connected subset of $\bbR_{>0}^2$ although we shall continue to call it a cluster. 

As corollaries we obtain results for the height process and cumulative particle current (flux process) of TASEP. 
 In terms of the CGM these are  defined respectively by
\begin{align}
\cHp{n}{t} &= \max \{\sup\{m \in \bbZ_{>0}: \cGp{m, n} \le t\}, 0\} \label{EHp} \\
\cFluxp{m}{t} &= \max \{\sup \{n \in \bbZ_{>0}: \cGp{m+n-1, n} \le t, 0\} \label{EFp}
\end{align}
for $m, n \in \bbZ_{>0}$ and $t \in \bbR_{\ge 0}$. In the absence of $m, n$-dependence in \eqref{EWP} for $m, n \in \bbZ_{>0}$, \eqref{EHp} gives the number of jumps executed by particle $n$ by time $t$ and also the \emph{height} (namely, length) of the $n$th row of the cluster at time $t$, while \eqref{EFp} counts the number of particles that have jumped from site $m-1$ to site $m$ by time $t$.  
These interpretations, although not valid in the full generality of \eqref{EWP}, justify the names of the processes in \eqref{EHp}--\eqref{EFp}. 


\subsection{Discussion of the main results}
\label{SsResDis}
The main contributions of this paper are exact first-order asymptotics of the growth process that lead to fairly explicit descriptions of the limit shape and the limiting flux function. Precise results are stated in Section \ref{SRes}. For the moment, we summarize some key points. 

\emph{An explicit centering for the growth process (Theorem \ref{TEmpShp}).} The central result of the paper
 computes an explicit, deterministic approximation to the first order (a \emph{centering} for short, see Definition \ref{DCent} below for the precise meaning) for the growth process under a mild growth condition on the means of the waiting times. More specifically, assuming that $\min \ca{m}{}+\min \cb{n}{}$ does not decay too fast as $m+n$ grows, 
\begin{align}
\cGp{m, n} \stackrel{\text{a.s.}}{\approx} \inf_{-\min \ca{m}{} < z < \min \cb{n}{}} \bigg\{\sum_{i=1}^m \frac{1}{\ca{m}{i}+z} + \sum_{j=1}^n \frac{1}{\cb{n}{j}-z}\bigg\} \quad \text{ for large } m+n. \label{ECent}
\end{align}
Here, $z$ serves as a convenient parameter indexing the increment-stationary versions of the growth process. 
This result is obtained by first developing summable concentration bounds for the growth process. 

\emph{Resolution of a conjecture due to E.\ Rains (Theorem \ref{TRains}).} A formula similar to \eqref{ECent} appeared in \cite{rain-00} within the continuous counterpart of Conjecture 5.2, which is not stated explicitly but can be discerned from the context. We state the part of the conjecture related to the present model and prove it by means of concentration bounds. 

\emph{Shape function (Theorem \ref{TShpFun}).} 
Assume further that the running minima $\min \ca{n}{}$ and $\min \cb{n}{}$ converge to some $\aml, \bml \in \bbR \cup \{\infty\}$ with $\aml + \bml > 0$, respectively, and the empirical distributions associated with $\ca{n}{}$ and $\cb{n}{}$ converge vaguely  to some subprobability measures $\alpha$ and $\beta$, respectively, on $\bbR$ as $n \rightarrow \infty$.  
Then \eqref{ECent} leads to the simpler a.s.\ approximation 
\begin{align}
\cGp{m, n} \stackrel{\text{a.s.}}{\approx}  \inf_{-\aml < z < \bml} \bigg\{m\int_{\bbR} \frac{\alpha(\dd a)}{a+z} + n\int_{\bbR} \frac{\beta(\dd b)}{b-z}\bigg\} \quad \text{ for large } m, n. \label{EShpF}
\end{align}
The centering in \eqref{EShpF} is unique with the property that it extends to a positive-homogeneous and continuous function on $\bbR_{\ge 0}^2$. This 
extension is the shape function (see Definition \ref{DShpFun}) of the growth process. In many variants of the exponential CGM from the literature, the shape function can be either represented as or derived from \eqref{EShpF}, see \cite[Section 3.9]{emra-janj-sepp-19-shp-long} for numerous corollaries to this effect. 

Theorem \ref{TShpFun} is a considerable strengthening of \cite[Theorem 2.1]{emra-16-ecp} which derived the approximation \eqref{EShpF} as $m, n$ grow large along a fixed direction and when the parameters in \eqref{EPar} are not $m, n$-dependent and are randomly chosen subject to a joint ergodicity condition. This condition enabled  \cite{emra-16-ecp} to utilize subadditive ergodic theory to obtain the existence of the shape function and then compute it through convex analysis from the shape functions of the increment-stationary growth processes. An obstruction to implementing this approach in the present setting is that the waiting times in \eqref{EWP} are not stationary with respect to lattice translations and, therefore, the existence of the shape function is no longer guaranteed by standard subadditive ergodic theory. 

\emph{Growth near the axes (Theorem \ref{TNarShp}).} Another consequence of \eqref{ECent} is that 
\begin{align}
\cGp{m, n} \stackrel{\text{a.s.}}{\approx} 
\begin{cases} 
n \int_{\bbR}  (b+\min \ca{m}{})^{-1} \beta(\dd b)&\quad \text{ when $n$ is large and $m/n$ is small} \\ m \int_{\bbR}  (a+\min \cb{n}{})^{-1} \alpha(\dd a)&\quad \text{ when $m$ is large and $n/m$ is small} \end{cases} \label{ENarShp}
\end{align}
provided that $\inf \a + \inf \b > 0$ and the appropriate half of the vague convergence assumption above holds. In particular, \eqref{ENarShp} describes the asymptotics of the growth process along a fixed column or row. The result demonstrates the possibility of macroscopically uneven growth in the cluster, for example, across columns and reveals the underlying reason for this as the variations in the $\min \ca{m}{}$ sequence. Near the axes, the right-hand side of \eqref{EShpF} is approximately given by 
\begin{align*}
\begin{cases} n \int_{\bbR}  (b+\aml)^{-1} \beta(\dd b)&\quad \text{ when $n$ is large and $m/n$ is small} \\ m \int_{\bbR}  (a+\bml)^{-1} \alpha(\dd a)&\quad \text{ when $m$ is large and $n/m$ is small} \end{cases} 
\end{align*}
in contrast with \eqref{ENarShp}. Discrepancies in these approximations are manifested as macroscopic spikes and crevices in the cluster relative to the boundary of the limit shape near the axes as shown in Figure \ref{FSpikeCrevice}. See Subsection \ref{SsSpikes} for a precise calculation in support of the figure. 

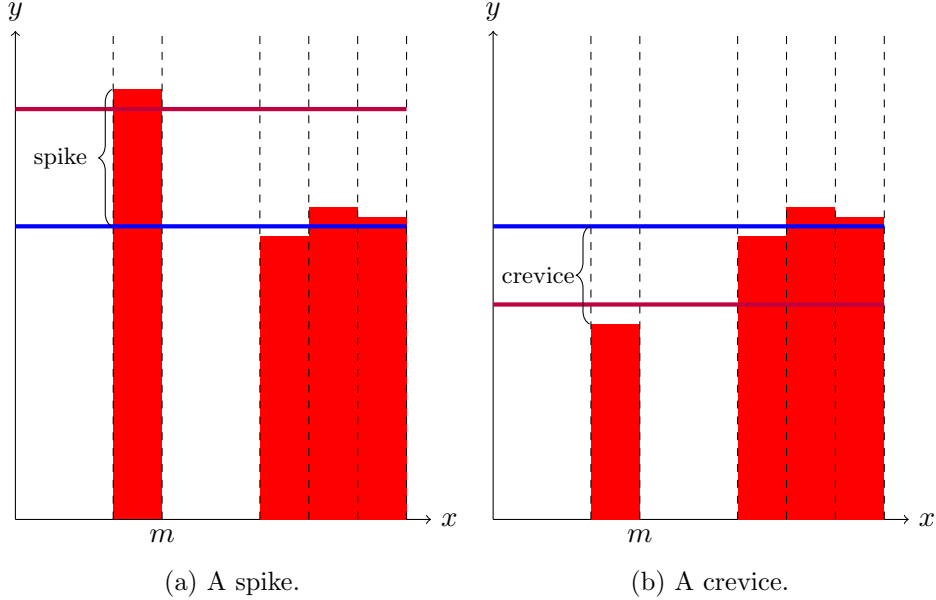
\begin{figure}[!h]  
\centering 
 \begin{subfigure}[b]{0.4\linewidth}
\centering    
\begin{tikzpicture}[scale=1.3]   
      \draw[->] (0,0) -- (4.25,0) node[right] {$x$};  
      \draw[->] (0,0) -- (0,5) node[above] {$y$};
\fill[red] (1, 0)rectangle(1.5, 4.4); 
\fill[red] (2.5, 0)rectangle(3, 2.9);
\fill[red] (3, 0)rectangle(3.5, 3.2);
\fill[red] (3.5, 0)rectangle(4.0, 3.1);
\draw[dashed](1, 0)--(1, 5);  
\draw[dashed](1.5, 0)node[below]{$m$}--(1.5, 5); 
\draw[dashed](2.5, 0)--(2.5, 5);  
\draw[dashed](3, 0)--(3, 5); 
\draw[dashed](3.5, 0)--(3.5, 5);  
\draw[dashed](4, 0)--(4, 5);
\draw[blue, ultra thick](0, 3)--(4, 3);
\draw[purple, ultra thick](0, 4.2)--(4, 4.2);
\draw [decorate,decoration={brace,amplitude=5pt, raise=0.5pt},yshift=0pt]
(1, 3) -- (1,4.4) node [black,midway,xshift=-0.7cm] {\footnotesize
spike};
    \end{tikzpicture}%
    \caption{A spike. } \label{FSpik}  
  \end{subfigure}
\begin{subfigure}[b]{0.4\linewidth}
\centering
    \begin{tikzpicture}[scale=1.3]   
      \draw[->] (0,0) -- (4.25,0) node[right] {$x$};  
      \draw[->] (0,0) -- (0,5) node[above] {$y$};
\fill[red] (1, 0)rectangle(1.5, 2); 
\fill[red] (2.5, 0)rectangle(3, 2.9);
\fill[red] (3, 0)rectangle(3.5, 3.2);
\fill[red] (3.5, 0)rectangle(4.0, 3.1);
\draw[dashed](1, 0)--(1, 5);  
\draw[dashed](1.5, 0)node[below]{$m$}--(1.5, 5); 
\draw[dashed](2.5, 0)--(2.5, 5);  
\draw[dashed](3, 0)--(3, 5); 
\draw[dashed](3.5, 0)--(3.5, 5);  
\draw[dashed](4, 0)--(4, 5);
\draw[blue, ultra thick](0, 3)--(4, 3);
\draw[purple, ultra thick](0, 2.2)--(4, 2.2);
\draw [decorate,decoration={brace,amplitude=5pt,raise=0.5pt},yshift=0pt]
(1, 2) -- (1,3) node [black,midway,xshift=-0.7cm] {\footnotesize
crevice};
    \end{tikzpicture}%
\caption{A crevice.} \label{FCrev}  
\end{subfigure}
\caption{An illustration of the cluster along column $m \in \bbZ_{>0}$ and after columns with much larger indices at time $t$ (red) in the case $\min \bfa_m \neq \aml$. The lines $y \int_{\bbR} (b+\aml)^{-1}\beta(\dd b) = t $ (blue) and $y\int_{\bbR} (b+\min \a_m)^{-1}\beta(\dd b) = t$ (purple) are shown. (a) A spike forms when $\min \bfa_m > \mathfrak{a}$ (b) A crevice forms when $\min \bfa_m < \mathfrak{a}$. }
\label{FSpikeCrevice}
\end{figure}

\emph{Limit shape (Theorem \ref{TLimShp}).} With the aid of \eqref{EShpF}-\eqref{ENarShp} and assuming $\alpha, \beta \neq 0$, the limit shape (in the sense of \eqref{ELimShpDef}) can be identified as the union of the sublevel set 
\begin{align}\bigg\{(x, y) \in \bbR_{\ge 0}^2: \inf_{z \in (-\aml, \bml)} \bigg\{x \int_{\bbR} \frac{\alpha(\dd a)}{a+z} + y \int_{\bbR} \frac{\beta(\dd b)}{b-z} \bigg\} \le 1\bigg\} \label{ELimShpBlk}\end{align} 
of the shape function, and the line segments 
\begin{align}
\bigg\{(x, 0) \in \bbR_{\ge 0}^2: x\int_{\bbR} \dfrac{\alpha(\dd a)}{a+\bms} \le 1\bigg\} \quad \text{ and } \quad \bigg\{(0, y) \in \bbR_{\ge 0}^2: y \int_{\bbR} \dfrac{\beta(\dd b)}{b+\ams} \le 1\bigg\}, \label{ELimShpAxes}
\end{align}
where $\ams = \sup_{m \in \bbZ_{>0}} \min \ca{m}{}$ and $\bms = \sup_{n \in \bbZ_{>0}} \min \cb{n}{}$. See Figure \ref{FLimShp} for an illustration. (When $\alpha = 0$ or $\beta = 0$, the above set is unbounded but can still be viewed as the limit shape in a truncated sense, see Subsection \ref{SsLimShp}). Computations behind the subsequent discussion are either omitted or postponed to Subsections \ref{SsLimShp}-\ref{SsFlat}. 
The statements pertinent to the vertical axis have obvious analogues for the horizontal axis. 

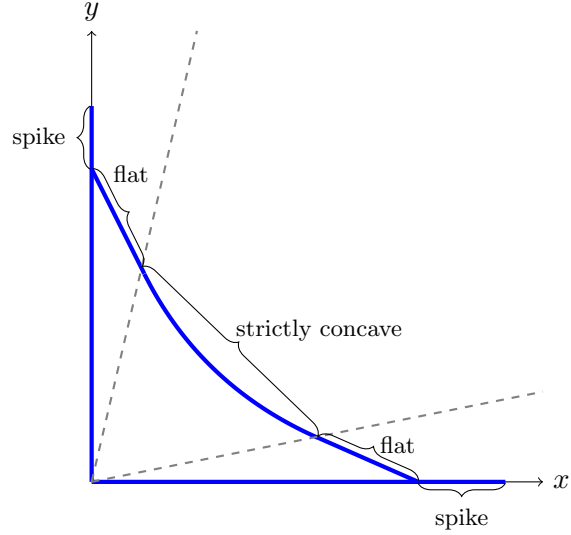
\begin{figure}[!h]
\centering
\begin{tikzpicture}
\draw[->] (0,0) -- (6,0) node[right] {$x$};  
\draw[->] (0,0) -- (0,6) node[above] {$y$};
\draw[ultra thick, blue] (0, 0)--(5.5, 0);
\draw[ultra thick, blue] (0, 0)--(0, 5);
\draw[blue, ultra thick]   plot[smooth,domain=0.7:3] (\x, {(2.5-sqrt(\x))^2});
\draw[blue, ultra thick]   plot[smooth,domain=0:0.7] (\x, {-1.99*(\x-0.7)+2.77});
\draw[blue, ultra thick]   plot[smooth,domain=0:0.6] ({-2.27*(\x-0.6)+2.98}, \x);
\draw[thick, gray, dashed] (0, 0)--(3, 0.6)--(6, 1.2);
\draw[thick, gray, dashed] (0, 0)--(0.7, 3)--(1.4, 6);
\draw [decorate,decoration={brace,amplitude=5pt, raise=0.5pt},yshift=0pt]
(0, 4.16) -- (0,5) node [black,midway,xshift=-0.7cm] {\footnotesize
spike};
\draw [decorate,decoration={brace,amplitude=5pt, raise=0.5pt, mirror},xshift=0pt]
(4.34, 0) -- (5.5,0) node [black,midway,yshift=-0.5cm] {\footnotesize
spike};
\draw [decorate,decoration={brace,amplitude=5pt, raise=0.5pt},xshift=0pt]
(0.65, 2.85) -- (3,0.59) node [black,midway,yshift=0.3cm, xshift=1.2cm] {\footnotesize
strictly concave};
\draw [decorate,decoration={brace,amplitude=5pt, raise=0.5pt, mirror},xshift=0pt]
(0.65, 2.85) -- (0, 4.16) node [black,midway,yshift=0.6cm, xshift=0.2cm] {\footnotesize
flat};
\draw [decorate,decoration={brace,amplitude=5pt, raise=0.5pt, mirror},xshift=0pt]
(4.34, 0) -- (3,0.59) node [black,midway,yshift=0.2cm, xshift=0.4cm] {\footnotesize
flat};
\end{tikzpicture}
\caption{An illustration of the boundary of the limit shape (blue) and the boundary of the region \eqref{ECurvReg} (dashed gray). The strictly concave part and the (possibly empty) flat segments and spikes of the limit shape are indicated.}
\label{FLimShp}
\end{figure}

The boundary of the limit shape inside $\bbR_{>0}^2$ connects to the axes at the points 
\begin{align}
\bigg(\bigg\{\int_{\bbR} \frac{\alpha(\dd a)}{a+\bml}\bigg\}^{-1}, 0\bigg) \quad \text{ and } \quad \bigg(0, \bigg\{\int_{\bbR} \frac{\beta(\dd b)}{b+\aml}\bigg\}^{-1}\bigg). \label{EIntcp}
\end{align}
Comparing with \eqref{ELimShpAxes} shows that the limit shape has a vertical \emph{spike}, namely, a vertical line segment above the second intercept in \eqref{EIntcp}, if and only if $\ams > \aml$. The latter is the precise condition for the occurrence of a vertical spike in the cluster. Thus, the limit shape retains some residual memory of the spikes in the cluster by encoding their maximal size (to the first-order asymptotics) as the lengths of its spikes. However, the crevices and non-maximal spikes of the cluster, despite being persistent macroscopic scale structures, are not visible in the limit shape.

The boundary of the limit shape can be explicitly parametrized. It is curved (strictly concave) inside the nonempty conic region given by 
\begin{align}
\bigg\{(x, y) \in \bbR_{>0}^2: x \int_{\bbR} \frac{\alpha(\dd a)}{(a-\aml)^{2}} > y \int_{\bbR} \frac{\beta(\dd b)}{(b+\aml)^2} \ \text{ and } \ x\int_{\bbR} \frac{\alpha(\dd a)}{(a+\bml)^2} < y \int_{\bbR} \frac{\beta(\dd b)}{(b-\bml)^2}\bigg\}, \label{ECurvReg}
\end{align}
and is flat elsewhere. In particular, the boundary has a flat segment inside $\bbR_{>0}^2$ adjacent to the vertical axis if and only if $\int_{\bbR} (a-\aml)^{-2}\alpha(\dd a) < \infty$. This condition indicates that the small parameters in $\ca{m}{}$ become sufficiently infrequent as $m \rightarrow \infty$. It holds precisely when $\aml < \inf \supp \alpha$ or $\int_{\bbR} (a-\inf \supp \alpha)^{-2}\alpha(\dd a) < \infty$ since $\aml \le \inf \supp \alpha$. 
The formation of flat segments can be understood geometrically in terms of the coalescence of geodesics in the associated LPP model, which we leave to the sequel \cite{emra-janj-sepp-19-buse}. 

Formulas \eqref{ELimShpBlk}-\eqref{ELimShpAxes} illuminate how the inhomogeneity introduced through the parameters $\a$ and $\b$ at the microscopic scale propogates to the limit shape. This happens by means of three partially independent mechanisms: the limiting empirical measures $\alpha, \beta$, the limiting running minima $\aml, \bml$ and the maximal running minima $\ams, \bms$. The dependence on the running minima implies that changing the means of the waiting times in a single column or row can alter the limit shape. This feature is reminiscent of the sensitivity of the flux function to a slow bond in TASEP \cite{basu-sido-sly-16}. 




\emph{The limiting height and flux functions (Theorem \ref{TLimFlux}).} The knowledge of the shape function also leads to following centerings for the height and flux processes. 
\begin{align*}
\cHp{n}{t} &\stackrel{\text{a.s.}}{\approx} \max \bigg\{\sup_{z \in (-\aml, \bml)}\bigg\{\frac{t-n\int_{\bbR} (b-z)^{-1}\beta(\dd b)}{\int_{\bbR} (a+z)^{-1}\alpha(\dd a)}\bigg\}, 0\bigg\} \\
\cFluxp{m}{t}&\stackrel{\text{a.s.}}{\approx} \max \bigg\{\sup_{z \in (-\aml, \bml)}\bigg\{\frac{t-m\int_{\bbR}(a+z)^{-1}\alpha(\dd a)}{\int_{\bbR} (a+z)^{-1}\alpha(\dd a)+\int_{\bbR}(b-z)^{-1}\beta(\dd b)}\bigg\}, 0\bigg\}, 
\end{align*}
for sufficiently large $m, n$ and all $t$. The formulas above are obtained from \eqref{EShpF} assuming further that the measures $\alpha, \beta$ are nonzero. 

%

\emph{Height of a fixed row (Theorem \ref{TNarHeight}).} Similarly, it follows from \eqref{ENarShp} that 
\begin{align*}\cHp{n}{t} \stackrel{\text{a.s.}}{\approx} t\bigg\{\int_{\bbR} \frac{\alpha(\dd a)}{a+\min \cb{n}{}}\bigg\}^{-1} \quad \text{ for fixed $n$ and large $t$.} \end{align*}

\subsection{Methodology}
\label{SsMeth}

We study the growth process $\cGp{}$ through couplings with its increment-stationary versions $\csGp{}{z}$ indexed by the $z$-parameter in \eqref{ECent}. The horizontal $\csGp{}{z}$-increments are independent along any row and exponentially distributed with explicit rates that are invariant under vertical translations, and an analogous statement holds for the vertical increments. This feature is sometimes referred to as the \emph{Burke property} in reference to Burke's theorem from queueing theory since the increments correspond to inter-arrival/departure times of customers in an interpretation of the CGM as M/M/1 queues in tandem. The details can be found, for example, in \cite[Section 7]{mart-06}. 

Due to the distributional structure of the increments,  
the $\csGp{}{z}$-process concentrates around the expression inside the infimum in \eqref{ECent} with overwhelming probability. Utilizing the coupling with $z$ chosen as the unique minimizer $\EMin$ in \eqref{ECent}, 
we then derive similar concentration bounds for the $\G$-process. The right tail bound comes easily since the $\cGp{}$-process is dominated by the $\csGp{}{z}$-process for each $z$. For the left tail bound, we first show that in the LPP representation of $\csGp{}{\EMin}$ the geodesic from the origin to $(m, n)$ exits the boundary close to the origin with overwhelming probability assuming monotonicity of the parameters. In this case, a left tail bound for $\cGp{m, n}$ can be extracted from that of $\csGp{}{\EMin}$. 
On the other hand, the distributional invariance of the $\cGp{m, n}$ under permutations of the parameters 
implies that the bound continues to hold without the monotonicity condition. The bounds obtained in this manner are not sharp but suffice for the purposes of first-order asymptotics. 

In the context of percolation and directed polymer models, the idea of coupling with increment-stationary processes to compute limit shapes dates back to \cite{sepp98mprf}.

An alternative path to the results proved in this work is to utilize the determinantal structure in the model. For example, formula \eqref{ECent} can be predicted from the correlation kernel. To obtain asymptotics in the strength of the present work, one would likely still turn to summable tail bounds for the growth process. Developing such bounds from the correlation kernel appears more involved than the more elementary arguments used here.

\subsection{Outline}
\label{SsOut}

The remainder of this text is organized as follows. Section \ref{SiLPP} casts the growth process as an LPP process with inhomogeneous exponential weights. This section also constructs the TASEP with the step initial condition and disorder in particles and holes from the growth process. The main results are formulated precisely in Section \ref{SRes}. Concentration bounds for the growth process are developed in Section \ref{SConBdLPP}. The centerings \eqref{ECent} and \eqref{EShpF} are derived in Sections \ref{SEmpShp} and \ref{SPrShpFun}, respectively. Section \ref{SThinRec} obtains approximations to the growth process near the axes. Section \ref{SPrLimShp} computes the limit shape. Section \ref{SPrFlux} computes the limiting flux and height functions for the disordered TASEP. 
Some standard and auxiliary facts are recorded in Appendix \ref{SApdx}. 

\subsection{Notation and conventions}
\label{SsNot}

Let $\bbZ$, $\bbQ$, $\bbR$, and $\bbC$ denote the spaces of integers, rational, real and complex numbers, respectively. For $a \in \bbR$, define $\bbZ_{\ge a} = \{i \in \bbZ: i \ge a\}$ and make analogous definitions if the set is replaced with $\bbR$ or the subscript is replaced with $> a$, $\le a$ or $< a$. Write $\emptyset$ for the empty set. For $n \in \Z_{>0}$, $[n]=\{1,2,\dotsc,n\}$ with the convention that $[n]=\varnothing$ for $n\in\Z_{\le0}$. For $x \in \bbR$, $\lc x \rc = \inf \bbZ_{\ge x}$ and $x_+ = \max \{x, 0\}$. 

For a real sequence $(c_i)_{i \in \bbZ_{>0}}$, write $c_{p, n}^{\min} = \min \{c_i: i \in [n] \smallsetminus [p-1]\}$ for $n \in \bbZ_{>0}$ and $p \in [n]$, and abbreviate $c_{1, n}^{\min} = c_n^{\min}$ for $n \in \bbZ_{>0}$. For $k \in \bbZ_{\ge 0}$, denote by $\tau_k$ the shift map $(c_i)_{i \in \bbZ_{>0}} \mapsto (c_{i+k})_{i \in \bbZ_{>0}}$. 

A function $f: \bbR^2_{>0} \rightarrow \bbR$ is positive-homogeneous if $f(cx, cy) = cf(x, y)$ for $x, y, c > 0$. Being an \emph{increasing} or \emph{decreasing} function is understood in the strict sense.  For any set $X$ and subset $A \subset X$, write $\one_{A}$ for the indicator function of $A$ that equals $1$ on $A$ and $0$ on the complement $X \smallsetminus A$. 
For any function $f: A \rightarrow \bbR \cup \{\infty, -\infty\}$, the product $\one_{A}f$ equals $f$ on $A$ and $0$ on $X \smallsetminus A$. If $X$ is a topological space, $\cl{A}$ denotes the closure of $A$ in $X$. 

The support of a Borel measure $\mu$ on $\bbR$ is the set $\supp \mu = \bbR \smallsetminus U$ where 
$U \subset \bbR$ is the largest open set with $\mu(U) = 0$. For $\lambda \in \bbR_{>0}$, the exponential distribution with rate $\lambda$, denoted $\Exp(\lambda)$, is the Borel measure on $\bbR$ with density $x \mapsto \one_{\{x > 0\}} \lambda e^{-\lambda x}$. Its mean and variance are $\lambda^{-1}$ and $\lambda^{-2}$, respectively. The statement $X \sim \Exp(\lambda)$ means that the random variable $X$ is $\Exp(\lambda)$-distributed. For $x \in \bbR$, the Dirac measure $\delta_{x}$ is the Borel probability measure on $\bbR$ such that $\delta_x\{x\} = 1$. 

A sequence of events $(E_n)_{n \in \bbZ_{>0}}$ in a probability space occurs with overwhelming probability if for any $p \in \bbZ_{>0}$ the probability of $E_n$ is at least $1-C_pn^{-p}$ for $n \in \bbZ_{>0}$ for some constant $C_p > 0$ dependent only on $p$. 

\subsection*{Acknowledgement}
The authors are grateful to an anonymous referee for helpful comments. 

\section{LPP with inhomogeneous exponential weights}
\label{SiLPP}

In this section, we reintroduce the model from the percolation perspective and mention its special features due to the exponential weights (waiting times) that contribute to our analysis. We also discuss the disordered TASEP associated with the growth process. 

\subsection{Last-passage times, geodesics and exit points}

A finite sequence $\pi = (\pi_i)_{i \in [p]}$ in $\bbZ^2$ is an \emph{up-right path} if $\pi_{i}-\pi_{i-1} \in \{(1, 0), (0, 1)\}$ for $1 < i \le p$. For $k, l, m, n \in \bbZ$, write $\Pi_{k, l}^{m, n}$ for the set of all up-right paths with $\pi_1 = (k, l)$ and $\pi_p = (m, n)$. Let $\Blk = \bbR^{\bbZ_{>0}^2}$ and $\BlkBd = \bbR^{\bbZ_{\ge 0}^2}$. Define the \emph{last-passage times} on $\Blk$ by 
\begin{align}
\cGi{m, n}{k, l} &= \max \limits_{\pi \in \Pi_{k, l}^{m, n}} \sum_{(i, j) \in \pi} \cw{i, j} \quad \text{ for } m, n, k, l \in \bbZ_{>0} \text{ and } \w \in \Blk, \label{ELPT}
\end{align}
and on $\BlkBd$ by
\begin{align}
\csGi{m, n}{k, l} &= \max \limits_{\pi \in \Pi_{k, l}^{m, n}} \sum_{(i, j) \in \pi} \csw{i, j} \quad \text{ for } k, l, m, n \in \bbZ_{\ge 0} \text{ and } \csw{} \in \BlkBd. \label{ELPT2} 
\end{align}
We  work with $k \le m$ and $l \le n$ in the sequel, in which case $\Pi_{k, l}^{m, n}$ is nonempty and the maxima above are finite. Any maximizer $\pi \in \Pi_{k, l}^{m, n}$ in \eqref{ELPT} or \eqref{ELPT2} is called a \emph{geodesic}. Being finite and nonempty, $\Pi_{k, l}^{m, n}$ contains at least one geodesic. We abbreviate $\cG{m, n} = \cGi{m, n}{1, 1}$ (consistently with \eqref{EGP}) and  $\csG{m, n} = \csGi{m, n}{0, 0}$.  

For $m, n \in \Z_{>0}$, define the \emph{horizontal} and \emph{vertical exit points} by 
\begin{align}
\cHE{m, n} &= \max \{i \in \bbZ_{\ge 0}: i \le m \text{ and } \csG{m, n} = \csGi{i, 0}{} + \csGi{m, n}{i, 1}\}\label{EhorExit}\\
\cVE{m, n} &=  \max \{j \in \bbZ_{\ge 0}: j \le n \text{ and } \csG{m, n} = \csGi{0, j}{} + \csGi{m, n}{1, j}\},  \label{EverExit}
\end{align}
respectively.
$\cHEi{m, n}{}$ is the maximal $i \in \{0, \dotsc, m\}$ such that $(i, 0) \in \pi$ for some geodesic $\pi \in \Pi_{0, 0}^{m, n}$, and then  $(i, 0)$ is the site where $\pi$ exits the horizontal boundary $\bbZ_{\ge 0} \times \{0\}$. Likewise for $\cVEi{m, n}{}$. 
\subsection{Bulk LPP process}
\label{SsbLPP}

Let $\cP$ denote the Borel probability measure on $\Blk$ under which
\begin{align*}
\{\cw{i, j}: i, j \in \Z_{>0}\} \text{ are independent and } \cw{i, j} \sim \Exp(1) \text{ for } i, j \in \bbZ_{>0}. 
\end{align*}
Write $\cE$ for the corresponding expectation. For $m, n \in \bbZ_{>0}$, let $\{\cwph{i, j}{m, n}: (i, j) \in [m] \times [n]\}$ be a collection of independent $\Exp(1)$-distributed random variables on the probability space $(\Blk, \cP)$. No assumption is made about the joint distribution of $\cwph{i, j}{m ,n}$ and $\cwph{i', j'}{m', n'}$ if $(m, n) \neq (m', n')$.  

Introduce \emph{inhomogeneity (disorder)} through real parameters in \eqref{EPar} assumed to satisfy \eqref{AsmParam}. 
A weight (waiting time) process with property \eqref{EWP} can be defined on $(\Blk, \cP)$ by setting 
\begin{align}
\cwp{i, j}{m, n} = \frac{\cwph{i, j}{m, n}}{\ca{m}{i}+\cb{n}{j}} \quad \text{ for } m, n \in \bbZ_{>0} \text{ and } i \in [m], j \in [n]. \label{EWPdef}
\end{align}
Let $\cPf{m, n}$ denote the distribution of the weights $\{\cwp{i, j}{m, n}: i \in [m], j \in [n]\}$ under $\cP$ for $m, n \in \bbZ_{>0}$. In other words, $\cPf{m, n}$ is the Borel probability measure on $\bbR^{[m] \times [n]}$ under which 
\begin{align}
&\{\cw{i, j}: (i, j) \in [m] \times [n]\} \text{ are independent and } \nonumber\\
&\cw{i, j} \sim \Exp(\ca{m}{i}+\cb{n}{j}) \text{ for } (i, j) \in [m] \times [n]. \label{EblkwDis}
\end{align}

Define the \emph{bulk LPP process} via \eqref{ELPT} using the weights in \eqref{EWPdef}. Namely, the value of the process at site $(m, n)$ is given by 
\begin{align*}
\cGp{m, n} = \max \limits_{\pi \in \Pi_{1, 1}^{m, n}} \sum_{(i, j) \in \pi} \cwp{i, j}{m, n} \quad \text{ for } m, n \in \bbZ_{>0} \text{ and } i \in [m], j \in [n]. 
\end{align*}
This is a particular construction of the corner growth process discussed in Subsection \ref{SsiCGM}. 

\subsection{Stationary last-passage increments}

The horizontal and vertical $\csG{}$-increments are defined by 
\begin{align}
\csI{m, n} &= \one_{\{m > 0\}}(\csG{m, n}-\csG{m-1, n}) \label{EHorInc}\\
\csJ{m, n} &= \one_{\{n > 0\}}(\csG{m, n}-\csG{m, n-1}) \label{EVerInc} 
\end{align}
respectively, for $m, n \in \bbZ_{\ge 0}$. From the definitions,  $\csI{m, 0}=\csw{m, 0}$ and $\csJ{0, n}=\csw{0, n}$ for $m, n \in \bbZ_{>0}$. 

Let $\cIz{m, n}=(-\min \ca{m}{}, \min \cb{n}{})$ for $m, n \in \bbZ_{\ge 0}$ with the convention $\min \ca{0}{} = \min \cb{0}{} = \infty$. For $m, n \in \bbZ_{\ge 0}$ and $z \in \cIz{m, n}$, let $\csPf{m, n}{z}$ denote the Borel probability measure on $\bbR^{([m] \cup \{0\}) \times ([n] \cup \{0\})}$ under which 
\be
\label{Estw}
\begin{aligned}
&\{\csw{i, j}: i \in [m] \cup \{0\}, j \in [n] \cup \{0\}\} \text{ are independent, } \csw{}(0, 0) \stackrel{\text{a.s.}}{=} 0, \text{ and } \\ &\text{ for } i \in [m], j \in [n],  \quad \csw{i, j} \sim \Exp(\ca{m}{i}+\cb{n}{j}), \\ 
&\csw{i, 0} \sim \Exp(\ca{m}{i}+z) \ \text{and }  \ \csw{0, j} \sim \Exp(\cb{n}{j}-z). 
\end{aligned}
\ee
Under $\csPf{m, n}{z}$  the bulk weights   $\{\csw{i, j}: i \in [m], j \in [n]\}$  have distribution  $\cPf{m, n}$ described in \eqref{EblkwDis} and $\csG{}$-increments are stationary in the sense that 
\be\label{EBurke1}\begin{aligned}
&\text{$\{\csI{i, n}: i \in [m]\}$ are independent with $\csI{i, n}\sim$ Exp$(\ca{m}{i}+z)$}\\
\text{and } 
&\text{$\{\csJ{m, j}: j \in [n]\}$ are independent with $\csJ{m, j}\sim$ Exp$(\cb{n}{j}-z)$.}
\end{aligned}\ee
A stronger version of this property is in \cite[Lemma 4.2]{bala-cato-sepp} for constant parameters. The extension to the general case is sketched in \cite{emra-16-ecp}. 

We study the bulk LPP process mainly through the coupling $\cw{i, j} = \csw{i, j}$ for $i, j \in \bbZ_{>0}$. Then $\cGi{m, n}{k, l} = \csGi{m, n}{k, l}$ for $m, n, k, l \in \bbZ_{>0}$, and  
\begin{align}
\label{EexitId}
\csG{m, n} = \begin{cases} \csG{\cHE{m, n}, 0} + \cGi{m, n}{\cHE{m, n}, 1}, &\text{if } \cHE{m, n}>0 \\[3pt]
\csG{0, \cVE{m, n}} + \cGi{m, n}{1, \cVE{m, n}}, &\text{if } \cVE{m, n}>0  \end{cases} 
\end{align}
for $m, n \in \bbZ_{>0}$ follows from definitions \eqref{EhorExit}--\eqref{EverExit}. Utilizing the stationarity and the independence structure of $\sG$-increments, we establish sufficient control over the exit points and then gain access to the bulk LPP process via \eqref{EexitId}. 

\subsection{TASEP with particlewise and holewise disorder}
\label{SsTASEP}

Assume now that $\cw{i, j} \ge 0$ for $i, j \in \bbZ_{>0}$. Define the height of an \emph{interface} over site $n \in \bbZ_{>0}$ at time $t \in \bbR_{\ge 0}$ by 
\begin{align}
\cH{n}{t} = \max \{\sup \{m \in \bbZ_{> 0}: \cG{m, n} \le t\}, 0\}. \label{EH}
\end{align}
Since the weights are nonnegative, $\cG{}$ is coordinatewise nondecreasing. Hence, $\cH{n}{t}$ is nonincreasing in $n$ and nondecreasing in $t$. Note that $\cH{n}{t}$ also measures the width of the cluster in \eqref{EClus} at level $n$ and time $t$.  

The height variables also represent evolving configurations of particles on $\bbZ$ as follows: The position of particle $n \in \bbZ_{>0}$ at time $t \in \bbR_{\ge 0}$ is given by 
\begin{align}
\cprt{n}{t} = \cH{n}{t}-n+1. \label{Eprt}
\end{align}
Since $\cprt{n}{t}$ is decreasing in $n$ and nonincreasing in $t$, the particles move right over time retaining their order. In particular, each site is occupied by at most one particle at any time. Assume further that $\cw{i, j} > 0$ for $i, j \in \bbZ_{>0}$. Then each particle jumps one step at a time and the particles start from the step initial condition i.e.\ $\sigma(n, 0) = -n+1$ for $n \in \bbZ_{>0}$. 

Define the (total) flux over the time interval $[0, t]$ through site $i \in \bbZ_{>0}$ by 
\begin{align}
\cFlux{i}{t} &= \max \{\sup \{j \in \bbZ_{>0}: \cG{i+j-1, j} \le t\}, 0\}. \label{Eflux} 
\end{align}
Note from definitions \eqref{EH} and \eqref{Eprt} that 
\begin{align*}
\{j \in \bbZ_{>0}: \cG{i+j-1, j} \le t\} = \{j \in \bbZ_{>0}: \cH{j}{t} \ge j+i-1\} = \{j \in \bbZ_{>0}: \cprt{j}{t} \ge i\}. 
\end{align*}
Since the particles are initially at negative sites, it follows that $\cFlux{i}{t}$ counts the number of particles that have jumped from $i-1$ to $i$ by time $t$. 

Define the height process $\{\cHp{n}{t}: n \in \bbZ_{>0}, t \in \bbR_{\ge 0}\}$, the particle process $\{\cprtp{n}{t}: n \in \bbZ_{>0}, t \in \bbR_{\ge 0}\}$ and the flux process $\{\cFluxp{m}{t}: m \in \bbZ_{>0}, t \in \bbR_{\ge 0}\}$ through \eqref{EH}, \eqref{Eprt} and \eqref{Eflux}, respectively, using the bulk LPP process $\cGp{}$ in place of $\G$. 
The disordered TASEP arises in the special case $\ca{m}{i} = a_i$ and $\cb{n}{j} = b_j$ for $m, n \in \bbZ_{>0}$, $i \in [m]$, $j \in [n]$. 

\section{Main results}
\label{SRes}

We state our main results in this section. Throughout, fix two collections of real parameters $\a = \{\ca{m}{i}: m \in \bbZ_{>0}, i \in [m]\}$ and $\b = \{\cb{n}{j}: n \in \bbZ_{>0}, j \in [n]\}$ subject to condition \eqref{AsmParam}. 

\subsection{An explicit centering for the LPP process}
\label{SsCent}
\begin{defn}\label{DCent}
We call a (deterministic) function $F: \bbZ_{>0}^2 \rightarrow \bbR$ a \emph{centering} for the $\cGp{}$-process if for any $\epsilon > 0$, $\cP$-a.s., there exists a random $L \in \bbZ_{>0}$ such that 
\begin{align}
|\cGp{m, n}-F(m, n)| \le \epsilon (m+n) \quad \text{ for } m, n \in \bbZ_{\ge L}. \label{ECentDef}
\end{align}
\end{defn}
The definition does not determine $F$ uniquely since, for any function $f: \bbZ_{>0}^2 \rightarrow \bbR$ with 
$
\sup_{\substack{m, n \in \bbZ_{\ge l}}}(m+n)^{-1}f(m, n) \stackrel{l \rightarrow \infty}{\rightarrow} 0, 
$
the function $F+f$ also satisfies \eqref{ECentDef}. The results of this subsection provide an explicit centering under a mild condition on the parameters.  

Condition \eqref{AsmParam} implies that the interval $\cIz{m, n} = (-\min\ca{m}{}, \min \cb{n}{})$ 
is nonempty for $m, n \in \bbZ_{\ge 0}$. When $m, n \in \bbZ_{>0}$,  the length $|\cIz{m, n}| = \min \ca{m}{} + \min \cb{n}{}$ of this interval equals the reciprocal of the maximal mean of the weights in the rectangle $[m] \times [n]$: 
\begin{align*}
|\cIz{m, n}|^{-1} = \max_{i \in [m], j \in [n]} \cE[\cwp{i, j}{m, n}] \quad \text{ for } m, n \in \bbZ_{>0}. 
\end{align*}
Define 
\begin{align}
\label{EMeanSt}
\cEStShp{m, n}{z} = \sum_{i=1}^m \frac{1}{\ca{m}{i}+z} + \sum_{j=1}^n \frac{1}{\cb{n}{j}-z} \quad \text{ for } m, n \in \bbZ_{\ge 0}
\end{align}
and $z \in \bbC \smallsetminus (\{-\ca{m}{i}: i \in [m]\} \cup \{\cb{n}{j}: j \in [n]\})$. When $z \in \cIz{m, n}$, \eqref{EMeanSt} gives the mean of $\csG{m, n}$ under $\csPf{m, n}{z}$ 
as can be seen from \eqref{EHorInc}, \eqref{EVerInc} and \eqref{EBurke1}. 
Next define 
\begin{align}
\label{EECenter}
\cEShp{m, n} = \inf_{z \in \cIz{m, n}} \cEStShp{m, n}{z} \quad \text{ for } m, n \in \bbZ_{>0}.
\end{align}

Our first result bounds the difference $\cGp{m, n}-\cEShp{m, n}$ with error terms that are of smaller order than $m+n$ provided that $|\cIz{m, n}|$ does not decay too quickly. Note that the bounds require only one of $m$ and $n$ to be sufficiently large.  
\begin{thm}
\label{TEmpShp}
Let $p > 0$. Then, $\cP$-a.s., there exists a random $L \in \bbZ_{>0}$ such that
\begin{align*}
\cGp{m, n} &\le \cEShp{m, n} + |\cIz{m, n}|^{-1}(m+n)^{1/2+p} \\
\cGp{m, n} &\ge \cEShp{m, n} - |\cIz{m, n}|^{-1}(m+n)^{9/10+p}
\end{align*}
for $m, n \in \bbZ_{>0}$ with $m+n \ge L$. 
\end{thm}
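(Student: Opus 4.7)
The plan is to establish both inequalities with overwhelming probability for each fixed $(m,n)$ and to convert these bounds to the a.s.\ statement over $m+n \ge L$ by Borel--Cantelli on the common probability space $(\Blk,\cP)$ that carries the weights $\cwp{i,j}{m,n}$ for all $(m,n)$ through \eqref{EWPdef}. Throughout write $\EMin = \EMin(m,n)$ for the unique minimizer of $z \mapsto \cEStShp{m,n}{z}$ on $\cIz{m,n}$, characterized by
\begin{align*}
\sum_{i=1}^m (\ca{m}{i}+\EMin)^{-2} = \sum_{j=1}^n (\cb{n}{j}-\EMin)^{-2}.
\end{align*}

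For the upper bound I use the standard coupling with the increment-stationary process at level $\EMin$: set $\csw{i,j}=\cwp{i,j}{m,n}$ for $i\in[m],\,j\in[n]$ and draw the boundary weights $\csw{i,0},\csw{0,j}$ according to \eqref{Estw} at $z=\EMin$, independently of the bulk. Because every path in $\Pi_{1,1}^{m,n}$ lies in $\Pi_{0,0}^{m,n}$ and the boundary weights are nonnegative,
\begin{align*}
\cGp{m,n} \le \csG{m,n} = \sum_{i=1}^m \csw{i,0} + \sum_{j=1}^n \csJ{m,j},
\end{align*}
the last expression being a sum of $m+n$ independent exponentials with rates $\ca{m}{i}+\EMin$ and $\cb{n}{j}-\EMin$ and total mean $\cEShp{m,n}$. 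A standard Chernoff--Bernstein estimate for sums of independent exponentials then produces the upper-tail bound
\begin{align*}
\csPf{m,n}{\EMin}\bigl\{\csG{m,n}-\cEShp{m,n}>s\,|\cIz{m,n}|^{-1}\sqrt{m+n}\bigr\} \le 2 e^{-c s^2}
\end{align*}
for $s$ in a polynomial range. Taking $s=(m+n)^p$ yields the first inequality of the theorem with failure probability summable over $(m,n)$.

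For the lower bound, the exit decomposition \eqref{EexitId} together with nonnegativity of the weights gives the deterministic inequality
\begin{align*}
\csG{m,n}-\cGp{m,n} \le \one_{\{\cHE{m,n}>0\}}\csG{\cHE{m,n},0} + \one_{\{\cVE{m,n}>0\}}\csG{0,\cVE{m,n}},
\end{align*}
since the passage time from $(\cHE{m,n},1)$ or $(1,\cVE{m,n})$ to $(m,n)$ can be extended to a path from $(1,1)$ by appending a portion along one axis, which only adds nonnegative bulk weights. Combining the lower-tail analogue of the concentration estimate above with control on the two boundary passage times reduces the task to an exit-point bound of the form
\begin{align*}
\csPf{m,n}{\EMin}\bigl\{\cHE{m,n}\vee\cVE{m,n}>(m+n)^{4/5+p}\bigr\} \le (\text{summable tail in } (m,n)).
\end{align*}
The minimizer condition $\partial_z\cEStShp{m,n}{\EMin}=0$ places the characteristic direction of the stationary geodesic at $(m,n)$. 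I first assume that the parameter sequences $\ca{m}{}$ and $\cb{n}{}$ are in nondecreasing order; in this case the relevant rates $\ca{m}{i}+\EMin$ for small $i$ are comparable to $|\cIz{m,n}|^{-1}$, and a perturbation argument in the spirit of Bal\'azs--Cator--Sepp\"al\"ainen shows that a large exit would force $\csG{m,n}$ to deviate from $\cEStShp{m,n}{\EMin\pm\delta}$ at an appropriately chosen shift $\delta$ by an amount the upper-tail concentration rules out with overwhelming probability. The monotonicity assumption is then removed via the distributional permutation invariance of $\cGp{m,n}$ (Lemma~\ref{LGDisPerInv}), which transfers the \emph{probability} bound to arbitrary parameter orderings since both $\cEShp{m,n}$ and $|\cIz{m,n}|$ are symmetric functions of the parameters. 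Given the exit bound, $\csG{\cHE{m,n},0}$ is a sum of at most $(m+n)^{4/5+p}$ independent exponentials with rates of order $|\cIz{m,n}|^{-1}$, and a further Chernoff estimate bounds it by $C|\cIz{m,n}|^{-1}(m+n)^{9/10+p}$ with overwhelming probability; the analogous estimate applies to $\csG{0,\cVE{m,n}}$.

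The main obstacle is the exit-point bound: handling potentially small individual rates $\ca{m}{i}+\EMin$ without any structural assumption on the parameters is delicate, and the suboptimal exponent $9/10$ in the lower-bound error reflects an elementary trade-off between the concentration scale $(m+n)^{1/2}$ and the exit scale $(m+n)^{4/5}$, obtained without recourse to integrable-probability input. Once the summable tail bounds are in hand for each $(m,n)$, Borel--Cantelli over $\bbZ_{>0}^2$ produces the random threshold $L$ claimed in the statement.
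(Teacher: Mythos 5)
Your overall architecture matches the paper's: couple $\cGp{}$ with the increment-stationary process at $z=\EMin$, derive concentration via the Burke decomposition $\csG{m,n}=\sum_{i=1}^m\csw{i,0}+\sum_{j=1}^n\csJ{m,j}$, control the geodesic's exit point by a perturbation-in-$z$ argument, pass to monotone parameters via permutation invariance (Lemma~\ref{LGDisPerInv}), and finish with Borel--Cantelli. The upper bound is essentially the paper's proof (minor quibble: a sum of independent exponentials does not obey a uniform sub-Gaussian bound $e^{-cs^2}$ at all scales, and the paper instead uses $2p$-th moment bounds giving only polynomial tails $C_ps^{-p}$, which are all that is needed and all that one can safely claim; this does not change the argument).

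There is a genuine gap in your lower-bound plan. You assert that after ordering $\ca{m}{}$ increasingly, ``the relevant rates $\ca{m}{i}+\EMin$ for small $i$ are comparable to $|\cIz{m,n}|^{-1}$.'' This is false in general: the rate at $i=1$ is $\min\ca{m}{}+\EMin = \cMinDis{m,n}$ whenever $\EMin$ is closer to the left endpoint, and $\cMinDis{m,n}$ can be arbitrarily small relative to $|\cIz{m,n}|$ (the minimizer can sit very near an endpoint of $\cIz{m,n}$). In that regime the boundary weights $\csw{i,0}$ near the origin have huge means, the exit-point perturbation degrades, and a direct implementation of your plan produces a left-tail error term that secretly depends on $\cMinDis{m,n}^{-1}$ rather than on $|\cIz{m,n}|^{-1}$; this is visible in the paper's provisional Lemma~\ref{LLTailMinCon}, whose error scale $\MinDis^{-1/4}\MinVar^{3/8}(m+n)^{1/2}$ is not controllable by $|\cIz{m,n}|^{-1}$ alone. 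The paper's extra ingredient that you are missing is Lemma~\ref{LLTail}, built on Lemma~\ref{LMinShfEst}: one truncates to a sub-rectangle $[m]\times\{L,\dotsc,n\}$ (or its transpose), chosen so that the shifted minimizer $\cEMinvs{m,n-L+1}{L-1}$ is a guaranteed distance $\delta$ from the boundary of its interval, and one controls the discarded piece $\G(1,L-1)$ separately using the inequality $\MinVar \ge \sum_j (a_1+b_j)^{-2}$. Only after this reduction does one apply the provisional bound, and only then do the $\delta$, $\MinVar$, and $|\cIz{m,n}|$ scales interlock to give the $(m+n)^{9/10}$ exponent stated in the theorem. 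Without this shift-to-a-good-boundary step, your exit-point estimate and hence your lower bound do not close.
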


In particular, \eqref{EECenter} is a centering for the $\cGp{}$-process under a mild condition. 
\begin{cor}
\label{CEmpShp}
Assume that, for some $c > 0$ and $\eta > 0$, 
\begin{align}
\label{AsmMin}
|\cIz{m, n}| \ge c(m+n)^{-1/10+\eta} \quad \text{ for } m, n \in \bbZ_{>0}. 
\end{align}
Let $\epsilon > 0$. Then, $\cP$-a.s., there exists a random $L \in \bbZ_{>0}$ such that
\begin{align*}
|\cGp{m, n}-\cEShp{m, n}| \le \epsilon (m+n) \quad \text{ for } m, n \in \bbZ_{\ge 0} \text{ with } m+n \ge L. 
\end{align*}
\end{cor}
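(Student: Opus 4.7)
The plan is to deduce Corollary \ref{CEmpShp} directly from Theorem \ref{TEmpShp} by choosing the free exponent $p$ in that theorem small relative to $\eta$ and using the lower bound on $|\cIz{m,n}|$ from \eqref{AsmMin} to absorb the error terms into $\epsilon(m+n)$.

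Concretely, the two sided error in Theorem \ref{TEmpShp} is dominated by $|\cIz{m,n}|^{-1}(m+n)^{9/10+p}$, since $(m+n)^{9/10+p} \ge (m+n)^{1/2+p}$ for $m+n \ge 1$. Under \eqref{AsmMin} we have $|\cIz{m,n}|^{-1} \le c^{-1}(m+n)^{1/10-\eta}$, so the dominant error is bounded by
\begin{align*}
c^{-1}(m+n)^{1/10-\eta}(m+n)^{9/10+p} \;=\; c^{-1}(m+n)^{1+p-\eta}.
\end{align*}
Choosing any $p \in (0,\eta)$ — for instance $p = \eta/2$ — yields the bound $c^{-1}(m+n)^{1-\eta/2}$, which is $o(m+n)$ as $m+n \to \infty$.

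The formal step is then the following: fix $p = \eta/2$ and apply Theorem \ref{TEmpShp} to obtain, $\cP$-a.s., a random $L_0 \in \bbZ_{>0}$ such that $|\cGp{m,n}-\cEShp{m,n}| \le c^{-1}(m+n)^{1-\eta/2}$ for all $m,n \in \bbZ_{>0}$ with $m+n \ge L_0$. Given $\epsilon > 0$, pick a deterministic $L_1 \in \bbZ_{>0}$ with $c^{-1}L_1^{-\eta/2} \le \epsilon$, and set $L = \max\{L_0, L_1\}$. Then for $m+n \ge L$,
\begin{align*}
|\cGp{m,n}-\cEShp{m,n}| \;\le\; c^{-1}(m+n)^{-\eta/2}(m+n) \;\le\; \epsilon (m+n),
\end{align*}
which is precisely the conclusion of the corollary.

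This deduction is essentially bookkeeping and presents no obstacle; all of the work has already been absorbed into Theorem \ref{TEmpShp}. The role of the hypothesis \eqref{AsmMin} is exactly to ensure that the prefactor $|\cIz{m,n}|^{-1}$ cannot wipe out the gain from the sublinear exponent $9/10+p$ in the lower bound of Theorem \ref{TEmpShp}; note that the exponent $-1/10$ appearing in \eqref{AsmMin} is calibrated precisely so that there is some slack ($\eta > 0$) to choose $p > 0$ with $1+p-\eta < 1$.
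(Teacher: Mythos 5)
Your deduction is correct and is exactly the intended one: the paper states Corollary \ref{CEmpShp} without a separate proof, treating it as an immediate consequence of Theorem \ref{TEmpShp}, and your bookkeeping (substitute the lower bound on $|\cIz{m,n}|$, pick $p<\eta$ so the dominant error exponent $1+p-\eta$ falls below $1$, then choose a deterministic threshold to absorb the constant) is precisely that immediate consequence spelled out.
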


\begin{rem} Assumption \eqref{AsmMin} is not claimed to be sharp.
The result does  fail if $|\cIz{m, n}|$ is allowed to decay too fast.  For example, let  $\ca{n}{i} = \cb{n}{i} = 2^{-i}$ for $n \in \bbZ_{>0}$ and $i \in [n]$. Then  $\cEShp{n, n}\le \cEStShp{n, n}{0}=2\sum_{i=1}^n 2^i \le 2^{n+2}$, while $\G(n, n)\ge \w(n,n) \ge 100\cdot 2^n$ happens with at least probability $e^{-200}$ for each $n$. 
\end{rem}

\subsection{A conjecture due to E.\ Rains}
\label{SsRains}
The next result can be viewed as a variant of Corollary \ref{CEmpShp} on account of the similarity of the centering \eqref{EECenter} to the limit \eqref{ERainsCent} below. The statement is a reformulation of a conjecture due to E.\ Rains \cite{rain-00}. 

\begin{thm}
\label{TRains}
Let $\a = (a_i)_{i \in \bbZ_{>0}}$ and $\b = (b_j)_{j \in \bbZ_{>0}}$ be real sequences subject to 
\begin{align}
&\inf \a + \inf \b > 0 \label{AsmInf},\\ 
&\cEStShp{\infty, \infty}{z} = \sum_{i=1}^\infty \frac{1}{a_i+z} + \sum_{j=1}^\infty \frac{1}{b_j-z} < \infty \quad \text{ for } -\inf \a  < z < \inf \b, \label{AsmRainsConv}
\end{align}
where the equality is a definition. Also define 
\begin{align}
\cEShp{\infty, \infty} = \inf_{-\inf \a < z < \inf \b} \cEStShp{\infty, \infty}{z}. \label{ERainsCent}
\end{align}
On $(\Blk, \cP)$, consider the weights 
\begin{align}
\w_n(i, j) = \frac{\cw{i, j}}{a_{\lc i/n \rc} + b_{\lc j/n \rc}} \quad \text{ for } i, j, n \in \bbZ_{>0}. \label{ERainsWp}
\end{align}
For each $n \in \bbZ_{>0}$, define the last-passage times $\{\G_n(i, j): i, j \in \bbZ_{>0}\}$ via \eqref{ELPT} from the weights $\{\w_n(i, j): i, j \in \bbZ_{>0}\}$. Then, $\cP$-a.s., 
\begin{align*}
\lim_{n \rightarrow \infty} n^{-1}\sup_{i, j \in \bbZ_{>0}} \G_n(i, j) = \cEShp{\infty, \infty}. 
\end{align*}
\end{thm}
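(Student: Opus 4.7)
Since $\w_n(i,j) \ge 0$, the values $\G_n(i,j)$ are coordinatewise nondecreasing, so $\G_n^\infty := \sup_{i,j}\G_n(i,j) = \lim_{M,N\to\infty}\G_n(M,N)$ is a monotone limit. The weights $\w_n$ fit the framework of Section~\ref{SsbLPP} with the $n$-dependent blocked parameter sequences $\bfa^{(n)} := (a_{\lceil i/n\rceil})_i$ and $\bfb^{(n)} := (b_{\lceil j/n\rceil})_j$, and assumption \eqref{AsmInf} ensures $\min\bfa^{(n)}_M + \min\bfb^{(n)}_N \ge \inf\bfa + \inf\bfb > 0$ uniformly in $M,N,n$. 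We control $\limsup$ and $\liminf$ of $n^{-1}\G_n^\infty$ separately.

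\emph{Upper bound.} Fix $\e > 0$ and a near-minimizer $z^* \in (-\inf\bfa, \inf\bfb)$ with $\cEStShp{\infty,\infty}{z^*} < \cEShp{\infty,\infty} + \e/2$. Couple $\w_n$ with a stationary LPP $\sG_n(\cdot,\cdot;z^*)$ obtained by augmenting $\w_n$ with independent boundary weights of rates $a_{\lceil i/n\rceil}+z^*$ and $b_{\lceil j/n\rceil}-z^*$, so that $\G_n(M,N) \le \sG_n(M,N;z^*)$ pathwise. By the Burke property \eqref{EBurke1}, $\sG_n(M,N;z^*)$ equals a sum of $M+N$ independent exponentials with total mean at most $n\,\cEStShp{\infty,\infty}{z^*}$ and total variance at most $n\sum_p[(a_p+z^*)^{-2} + (b_p-z^*)^{-2}] < \infty$; the latter uses $(a_p+z^*)^{-2} \le (\inf\bfa + z^*)^{-1}(a_p+z^*)^{-1}$ together with \eqref{AsmRainsConv}. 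A standard exponential MGF bound for such sums then yields, uniformly in $M,N$,
\[
\cP\bigl(\sG_n(M,N;z^*) > n\,\cEStShp{\infty,\infty}{z^*} + n\e/2\bigr) \le e^{-c n}, \qquad c = c(\e,z^*) > 0.
\]
Because $\{\sG_n(M,N;z^*) > \cdot\}$ is increasing in $(M,N)$, continuity of probability transfers this bound to $\sup_{M,N}\sG_n(M,N;z^*)$, and Borel--Cantelli in $n$ gives $n^{-1}\G_n^\infty \le \cEShp{\infty,\infty} + \e$ eventually a.s. The $\limsup$ bound follows by letting $\e \downarrow 0$ along a rational sequence.

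\emph{Lower bound.} Pick $\eta \in (0,1/10)$, set $K_n = \lfloor n^\eta\rfloor$ and $M_n = N_n = nK_n$, and apply the lower tail bound of Theorem~\ref{TEmpShp} to $\G_n$ at $(M_n,N_n)$ with $p > 0$ chosen so that $(1+\eta)(9/10+p) < 1$; the deterministic error $(\inf\bfa+\inf\bfb)^{-1}(M_n+N_n)^{9/10+p}$ is then $o(n)$. For this to promote the a.s.\ statement of Theorem~\ref{TEmpShp} across $n$, we need the underlying tail estimate to have failure probability summable in $n$ with constants that depend on the parameter sequences only through $|\cIz|^{-1}$, uniformly bounded here by $(\inf\bfa+\inf\bfb)^{-1}$; Borel--Cantelli in $n$ then gives $\G_n(M_n,N_n) \ge C_n - n\e$ eventually a.s., where
\[
C_n := \inf_{z \in (-\inf\bfa, \inf\bfb)} \Bigl\{\sum_{i=1}^{M_n}(a_{\lceil i/n\rceil}+z)^{-1} + \sum_{j=1}^{N_n}(b_{\lceil j/n\rceil}-z)^{-1}\Bigr\}
\]
is the centering from Theorem~\ref{TEmpShp} for the blocked model. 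A change of summation index gives $n^{-1}C_n = \inf_z\bigl\{\sum_{p=1}^{K_n}(a_p+z)^{-1} + \sum_{q=1}^{K_n}(b_q-z)^{-1}\bigr\}$, and we claim this converges to $\cEShp{\infty,\infty}$: the $\le$ direction is immediate by evaluating at any fixed near-minimizer; the $\ge$ direction uses that \eqref{AsmRainsConv} makes the truncated objective coercive at $-\inf\bfa$ and $\inf\bfb$, trapping any sequence of near-minimizers in a compact subinterval of $(-\inf\bfa,\inf\bfb)$ on which monotone pointwise convergence takes over. Thus $\liminf_n n^{-1}\G_n^\infty \ge \liminf_n n^{-1}\G_n(M_n,N_n) \ge \cEShp{\infty,\infty} - \e$, and $\e \downarrow 0$ finishes. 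The principal technical hurdle is the one noted above: extracting from the proof of Theorem~\ref{TEmpShp} a quantitative tail estimate whose failure probability is summable and whose constants depend on the parameter sequences only through $|\cIz|^{-1}$, so that the random threshold $L$ appearing in that theorem's a.s.\ statement can be replaced by a deterministic $n$-dependent one.
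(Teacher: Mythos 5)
Your plan follows essentially the same route as the paper's proof: exhaust the supremum along a diagonal exploiting monotonicity, prove the upper bound by stationary coupling and a tail estimate on $\sG$, and prove the lower bound by applying the left-tail estimate underlying Theorem~\ref{TEmpShp} to the blocked parameter sequences and then showing $\cEShp{K,K}\to\cEShp{\infty,\infty}$. Two minor differences: (1) for the upper tail you use a Chernoff/MGF bound to get exponential decay, whereas the paper reuses Lemma~\ref{LBulkRTBnd} (polynomial moments), takes $l\to\infty$ first using monotonicity, and runs Borel--Cantelli in $n$; both work, and your version is slightly stronger but unnecessary. (2) For the lower tail, you choose a single $n$-indexed diagonal $(nK_n,nK_n)$ with $K_n=\lfloor n^\eta\rfloor$, while the paper applies Lemma~\ref{LLTailMinCon} at $(ln,ln)$ with $l$ fixed and does a two-parameter Borel--Cantelli over $(l,n)$. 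The ``principal technical hurdle'' you flag at the end is exactly what the paper resolves: rather than routing through the cruder $|\cIz|^{-1}(m+n)^{9/10+p}$ error of Theorem~\ref{TEmpShp} (which rests on the blunt bound \eqref{EVarUB}), the paper establishes the uniform-in-$l$ bounds $\cMinVar{l,l}\le c_0$ and $\cMinDis{l,l}\ge c_0^{-1/2}$ (the chain \eqref{E197}--\eqref{E97}, using \eqref{AsmRainsConv}), which simultaneously (i) yields the summable quantitative tail bound from Lemma~\ref{LLTailMinCon} with an explicit $O(l^{1/2+\eta}n^{7/8+\eta})$ error and (ii) traps the minimizers $\EMin_l$ in a compact subinterval of $(-\inf\a,\inf\b)$, giving the convergence $\cEShp{l,l}\to\cEShp{\infty,\infty}$ — so the uniform $\MinVar,\MinDis$ bound is really the one missing lemma in your plan, and once stated it closes both gaps at once.
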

\begin{rem}
Since the conjecture addressed above  is somewhat dispersed within the text of \cite{rain-00}, we explain how to locate it. The statement is a special case of the continuous analogue of Conjecture 5.2. To obtain it, replace $m(\alpha, p_+, p_-)$ in (5.5) with $m_c(z; \rho^+, u; \rho^-)$ from (5.29) and set $u = 0$. The parameters $\rho^\pm = (\rho_i^\pm)$ and $a$ play the role of $\a, \b$ and $z$ in our setting. Assumptions (2.14)--(2.15) there correspond to our \eqref{AsmInf}--\eqref{AsmRainsConv}. The infimum in (5.5) is now to be taken over $z \in (-\inf \a, \inf \b)$. After these changes, the right-hand side of (5.5) becomes $\cEShp{\infty, \infty}$. In direct analogy with (5.7), the weights are chosen as in \eqref{ERainsWp}. Finally, the quantity $\lambda_1$ in (5.5) is precisely $\sup_{i, j \in \bbZ_{>0}} \G_n(i, j)$, which can be inferred from the discussion preceding \cite[Theorem 2.4]{rain-00}.
\end{rem}

\subsection{Shape function}
\label{SShpFun}

\begin{defn}\label{DShpFun} We call a deterministic  function $\shp: \bbR_{>0}^2 \rightarrow \bbR_{\ge 0}$ the {\rm shape function}  of the $\cGp{}$-process if $\shp$ is coordinatewise nondecreasing, positive-homogeneous {\rm(}see Subsection \ref{SsNot}{\rm)} and its restriction to $\bbZ_{>0}^2$ is a centering for $\cGp{}$ in the sense of 
Definition \ref{DCent}. 
\end{defn}
Definition \ref{DShpFun} is consistent with the notion of shape function from earlier literature, see Remark \ref{RDirLim} below. It can be seen from the definition that the shape function, if it exists, is necessarily unique and continuous. 
This subsection provides an explicit formula for the shape function under some natural sufficient conditions for its existence.  

Let $\alpha$ and $\beta$ be finite, nonnegative Borel measures on $\bbR$. Introduce the functions 
\begin{alignat}{2}
\cA{z} &= \int_{\bbR} \frac{\alpha(\dd a)}{a+z} && \quad \text{ for } z \in \bbC \smallsetminus (-\supp \alpha), \label{EAdef} \\
\cB{z} &= \int_{\bbR} \frac{\beta(\dd b)}{b-z} && \quad \text{ for } z \in \bbC \smallsetminus \supp \beta. \label{EBdef}
\end{alignat}
The integrals above are well-defined, and $\cA{}$ and $\cB{}$ are holomorphic functions. For each $z \in \bbC \smallsetminus ((-\supp \alpha) \cup \supp \beta)$, define 
\begin{align}
\label{EStShpFun} \cStShp{x, y}{z} &= x\cA{z} + y\cB{z} = x\int_\bbR \frac{\alpha(\dd a)}{a+z} + y\int_\bbR \frac{\beta(\dd b)}{b-z} \quad \text{ for } x, y > 0. 
\end{align}
As recorded in the last case of \cite[Corollary 3.16]{emra-janj-sepp-19-shp-long}, the shape function 
of the increment-stationary LPP process can be given in the form \eqref{EStShpFun}. This fact will not be used in the sequel, however. 

Assume that 
\begin{align}
\inf \supp \alpha + \inf \supp \beta > 0.  \label{EInfSuppCond}
\end{align}
Zero measures are acceptable here: if $\alpha=0$ then $\inf \supp \alpha=\infty$ and the same for $\beta$.  
Let $\aml, \bml \in \bbR \cup \{\infty\}$ satisfy
\begin{align}
&\aml + \bml > 0 \label{AsmLB}\\
&\aml \le \inf \supp \alpha \text{ and } \bml \le \inf \supp \beta. \label{Eab}
\end{align}
Then the intervals $(-\aml, \infty)$ and $(-\infty, \bml)$ are contained in the domains of \eqref{EAdef} and \eqref{EBdef}, respectively.  In particular, each $z \in (-\aml, \bml)$ is a legitimate parameter in \eqref{EStShpFun}. Therefore, the following definition is sensible: 
\begin{align}
\cShp{x, y} = \inf_{z \in (-\aml, \bml)} \cStShp{x, y}{z} \quad \text{ for } x, y \in \bbR_{>0}. \label{EShpFun}
\end{align}
As will be made precise with Lemma \ref{LEShpConv} below, one can view \eqref{EShpFun} as the continuous analogue of \eqref{EECenter} obtained after letting $m, n \to \infty$. 
From \eqref{EStShpFun}, \eqref{AsmLB} and \eqref{EShpFun} it follows that  the function $\cShp{}$ is nonnegative, concave, finite, coordinatewise nondecreasing and positive-homogeneous. 

The next result shows that the shape function of the $\cGp{}{}$-process exists and can be represented in the form \eqref{EShpFun} under mild conditions. In the statement, 
\begin{align*}
\alpha_m^{\bfa} = \frac{1}{m} \sum_{i=1}^m \delta_{\ca{m}{i}} \quad \text{ and } \quad \beta_n^{\bfb} = \frac{1}{n} \sum_{j=1}^n \delta_{\cb{n}{j}} \quad \text{ for } n \in \bbZ_{>0}
\end{align*}
denote the empirical distributions associated with the parameters $\a$ and $\b$, respectively. 

\begin{thm}
\label{TShpFun}
Assume that 
\begin{align}
&\lim_{m \rightarrow \infty}\alpha_m^{\bfa} = \alpha \quad \text{ and } \quad \lim_{n \rightarrow \infty} \beta_n^{\bfb} = \beta \quad \text{ in the vague topology} \label{AsmVagueConv} \\
&\lim_{m \rightarrow \infty} \min \ca{m}{} = \aml \quad \text{ and } \quad \lim_{n \rightarrow \infty} \min \cb{n}{}  = \bml \label{AsmMinConv}
\end{align} 
for some subprobability measures $\alpha, \beta$ on $\bbR$ and $\aml, \bml \in \bbR \cup \{\infty\}$ such that  \eqref{AsmLB} is satisfied. 
Then for any $\epsilon > 0$, $\cP$-a.s., there exists a random $L \in \bbZ_{>0}$ such that 
\begin{align*}
|\cGp{m, n}-\cShp{m, n}| \le \epsilon (m+n) \quad \text{ for } m, n \in \bbZ_{\ge L}.
\end{align*}
\end{thm}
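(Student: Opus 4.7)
\emph{Strategy.} My plan is to decompose the error as $\cGp{m,n} - \cShp{m,n} = (\cGp{m,n} - \cEShp{m,n}) + (\cEShp{m,n} - \cShp{m,n})$ and control each term separately. The first piece is random and is governed by fluctuations of the LPP process around its explicit centering provided by Theorem \ref{TEmpShp}, while the second is deterministic and amounts to convergence of the empirical variational formula \eqref{EECenter} to its continuous analogue \eqref{EShpFun}. Each piece must be controlled to order $o(m+n)$ uniformly in the direction $m/(m+n) \in (0,1)$.

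\emph{Probabilistic step.} Combining \eqref{AsmMinConv} with \eqref{AsmLB} yields $|\cIz{m,n}| = \min \ca{m}{} + \min \cb{n}{} \to \aml + \bml > 0$, so there exist $L_0 \in \bbZ_{>0}$ and $c_0 > 0$ with $|\cIz{m,n}|^{-1} \le c_0$ for all $m, n \ge L_0$, which is far stronger than the decay hypothesis \eqref{AsmMin}. Choosing $p = 1/20$ in Theorem \ref{TEmpShp} therefore gives $|\cGp{m,n} - \cEShp{m,n}| \le c_0(m+n)^{19/20}$ almost surely for $m+n$ sufficiently large, which is at most $(\epsilon/2)(m+n)$ eventually.

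\emph{Deterministic step.} The remaining task is to show $|\cEShp{m,n} - \cShp{m,n}| \le (\epsilon/2)(m+n)$ for $m, n$ large. Dividing by $m+n$ and writing $s = m/(m+n)$, this reduces to showing that $\inf_{z \in \cIz{m,n}} \{s \int (a+z)^{-1} \alpha_m^{\bfa}(da) + (1-s) \int (b-z)^{-1} \beta_n^{\bfb}(db)\}$ converges to $\inf_{z \in (-\aml, \bml)} \{s \cA(z) + (1-s) \cB(z)\}$ uniformly in $s \in (0,1)$. For each $z_0$ in a compact subset $K \subset (-\aml, \bml)$ and $m$ large enough, the integrand $a \mapsto (a+z_0)^{-1}$ is bounded and continuous on a half-line containing every $\ca{m}{i}$ and decays at infinity; together with vague convergence $\alpha_m^{\bfa} \to \alpha$ and $\min \ca{m}{} \to \aml$, a standard truncation argument gives $\int (a+z_0)^{-1} \alpha_m^{\bfa}(da) \to \cA(z_0)$ pointwise in $z_0 \in K$, and by convexity in $z$ uniformly on $K$. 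The analogous statement holds for $\cB$. Plugging a near-minimizer of the limiting infimum into the empirical one then yields $\cEShp{m,n}/(m+n) \le \cShp{m,n}/(m+n) + o(1)$.

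\emph{Main obstacle.} The genuine difficulty lies in the matching lower bound: one must rule out that the minimizers of $\cEShp{m,n}$ drift toward the endpoints of $\cIz{m,n}$, where the empirical integrals can diverge and cease to reflect the limit. To prevent this I would exploit the fact that the derivatives $-\sum_i (\ca{m}{i}+z)^{-2}$ and $\sum_j (\cb{n}{j}-z)^{-2}$ have definite signs and diverge as $z$ approaches the respective endpoint of $\cIz{m,n}$, yielding an a priori compact confinement of the minimizers inside a subinterval of $(-\aml, \bml)$ determined only by a lower bound for $\min(s, 1-s)$. The degenerate regime $\min(s, 1-s) \to 0$ corresponds to directions nearly parallel to an axis and requires a separate argument along the lines of Theorem \ref{TNarShp}, which controls the growth of $\cGp{}$ in those directions directly. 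Combining the probabilistic and deterministic estimates via the triangle inequality completes the proof.
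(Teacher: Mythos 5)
Your decomposition of the error into the probabilistic piece $\cGp{m,n}-\cEShp{m,n}$ (controlled by Theorem \ref{TEmpShp}) and the deterministic piece $\cEShp{m,n}-\cShp{m,n}$ is exactly the paper's strategy, and the probabilistic step is carried out correctly: under \eqref{AsmMinConv}, $|\cIz{m,n}|^{-1}$ is eventually bounded, so the error bounds from Theorem \ref{TEmpShp} are of order $o(m+n)$.

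There is, however, a genuine gap in the deterministic step. You propose to show ``an a priori compact confinement of the minimizers inside a subinterval of $(-\aml,\bml)$ determined only by a lower bound for $\min(s,1-s)$,'' and to handle the remaining directions with an argument like Theorem \ref{TNarShp}. This confinement is false. The limiting minimizer $\cMaMin{m,n}$ sits at the boundary $-\aml$ (respectively $\bml$) precisely on the flat regions $\cRv$ (respectively $\cRh$), and these regions contain full open cones of directions bounded away from the axes whenever $\int (a-\aml)^{-2}\,\alpha(da) < \infty$. Consequently the empirical minimizers $\cEMin{m,n}$ converge to $-\aml$ rather than staying in any fixed compact subinterval of $(-\aml,\bml)$, even for $s$ bounded away from $0$ and $1$; and the uniform convergence of the Cauchy transforms cannot be applied at $-\aml$ when $\aml = \inf\supp\alpha$ with $\alpha(\{\aml\})>0$. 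The paper (Lemma \ref{LMinConv}) instead proves $\cEMin{m,n}\to\cMaMin{m,n}$ by a mean-value argument that compares derivatives at $z=\cMaMin{m,n}+\epsilon/2$ --- a point that remains strictly interior even when $\cMaMin{m,n}$ is at the endpoint --- and then in Lemma \ref{LEShpConv} evaluates both variational formulas at the projection of $\cMaMin{m,n}$ onto $[-\aml+\delta,\bml-\delta]$, bounding the loss via the explicit second-difference identity \eqref{E35}. Finally, the appeal to Theorem \ref{TNarShp} for ``degenerate directions'' is off-target: that theorem concerns the random growth process near the axes, not the purely deterministic comparison $\cEShp{m,n}\approx\cShp{m,n}$; the paper's deterministic bound is uniform over all $m,n\ge L$ and needs no separate near-axis case. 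You should also note that the cases $\alpha=0$ or $\beta=0$ (where the infimum in \eqref{EShpFun} is attained in the limit at an endpoint and the shape function collapses to a one-variable function) require a dedicated argument, which the paper provides at the end of Lemma \ref{LEShpConv}.
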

\begin{rem}
Assumptions \eqref{AsmVagueConv}--\eqref{AsmMinConv} imply \eqref{Eab}. Hence, the shape function in the theorem given by \eqref{EShpFun} makes sense. 
\end{rem}
\begin{rem}
\label{RShpFunBd}
The condition $m, n \ge L$ can be weakened to $m+n \ge L$ if and only if 
\begin{align}
\cA{\min \cb{n}{}} = \cA{\bml} \text{ and } \cB{-\min \ca{m}{}} = \cB{-\aml} \quad \text{ for } m, n \in \bbZ_{>0}.  \label{EBdExt2}
\end{align}
This is recorded in \cite[Corollary 3.9]{emra-janj-sepp-19-shp-long}. Condition \eqref{EBdExt2} is equivalent to 
\begin{align}
\min \ca{m}{} = \aml \text{ for } m \in \bbZ_{>0} \text{ when } \beta \neq 0 \quad \text{ and } \quad \min \cb{n}{} = \bml \text{ for } n \in \bbZ_{>0} \text{ when } \alpha \neq 0 \label{EBdExt}
\end{align}
by the strict monotonicity of $\cA{}$ and $\cB{}$. (Recall from \eqref{AsmVagueConv} that the excluded case $\alpha = 0$ above corresponds to the mass of $\alpha_m^\a$ escaping to infinity as $m \to \infty$. Similarly for the case $\beta = 0$).  
\end{rem}
\begin{rem} 
When either $\alpha$ or $\beta$ is the zero measure, $\shp$ becomes a one-variable function and can be readily computed from \eqref{EShpFun} by monotonicity. 
\begin{align}
\label{E46}
\cShp{x, y} = \begin{cases}x\cA{\bml} \quad &\text{ if } \beta = 0 \\ y\cB{-\aml} \quad &\text{ if } \alpha = 0\end{cases} \quad \text{ for } x, y > 0. 
\end{align}
Hence, the shape function is identically zero in three cases: 
\begin{align}
\label{EShpFunZero}
\cShp{} = 0 \quad \text{ if } \aml = \infty \text{ or } \bml = \infty \text{ or } \alpha = \beta = 0. 
\end{align}
If none of the conditions in \eqref{EShpFunZero} holds then the shape function is nonzero on $\bbR_{>0}^2$ since 
\begin{align}
\cShp{x, y} \ge x\cA{\bml}+y\cB{-\aml} \ge \max \{x\cA{\bml}, y\cB{-\aml}\} \quad \text{ for } x, y > 0. \label{EShpBdLB}
\end{align} 
\end{rem}
\begin{rem}
The hypotheses of the theorem can be motivated from the following 
statement that holds 
under the weaker assumption \eqref{AsmInf}:  Let $(m_i)_{i \in \bbZ_{>0}}$ and $(n_j)_{j \in \bbZ_{>0}}$ be increasing sequences in $\bbZ_{>0}$. By the (sequential) compactness of $[-\infty, \infty]$ and the space of subprobability measures (see Lemma \ref{LSpmVagComp}), there exist increasing sequences $(k_i)_{i \in \bbZ}$ and $(l_j)_{j \in \bbZ_{>0}}$ in $\bbZ_{>0}$ such that 
\begin{align*}
&\lim_{i \rightarrow \infty}\alpha_{m_{k_i}}^{\bfa} = \alpha \quad \text{ and } \quad \lim_{j \rightarrow \infty} \beta_{n_{l_j}}^{\bfb} = \beta \quad \text{ in the vague topology} \\
&\lim_{i \rightarrow \infty} \min \ca{m_{k_i}}{} = \aml \quad \text{ and } \quad \lim_{j \rightarrow \infty} \min \cb{n_{l_j}}{}  = \bml 
\end{align*} 
for some subprobability measures $\alpha, \beta$ on $\bbR$ and $\aml, \bml \in \bbR \cup \{\infty\}$ with \eqref{AsmLB}. Repeating the argument in Section \ref{SPrShpFun} with $m = m_{k_i}$ and $n = n_{l_j}$ yields that for any $\epsilon > 0$, $\cP$-a.s., there exists a random $L \in \bbZ_{>0}$ such that 
\begin{align*}
|\cGp{m_{k_i}, n_{l_j}}-\cShp{m_{k_i}, n_{l_j}}| \le \epsilon (m_{k_i} + n_{l_j}) \quad \text{ for } i, j \in \bbZ_{>0} \text{ with } i, j \ge L. 
\end{align*}
\end{rem}

\begin{rem}
\label{RDirLim}
Theorem \ref{TShpFun} together with positive-homogeneity and continuity of the shape function implies that there exists a single $\cP$-a.s.\ event on which  
\begin{align}
\lim_{k \rightarrow \infty} k^{-1} \cGp{m_k^x, n_k^y} = \cShp{x, y} \label{ELPPDirLim}
\end{align}
holds for all choices of $x, y > 0$ and all sequences $\{m_k^x, n_k^y\}_{k \,\in\, \bbZ_{>0}}  \subset  \bbZ_{>0}$ such that  $m_k^x/k \rightarrow x$ and $n_k^y/k \rightarrow y$ as $k \rightarrow \infty$. In literature, \eqref{ELPPDirLim} with $m_k^x = \lc k x \rc$ and $n_k^y = \lc k y\rc$ is often taken as the definition of the shape function \cite{mart-06, sepp-cgm}.  
\end{rem}

\begin{rem}
Unlike the usual sequence of arguments, we do not first establish the a.s.\ limit in \eqref{ELPPDirLim} to obtain Theorem \ref{TShpFun}. Instead, we derive Theorem \ref{TShpFun} from Corollary \ref{CEmpShp} by approximating the centering \eqref{EECenter} by the shape function.
\end{rem}

\subsection{Growth near the axes}

The next theorem provides uniform approximations to the growth process at sites where one coordinate is large and the other is relatively small. In particular, the result describes the asymptotics along a fixed column or row. 
\begin{thm}
\label{TNarShp}
Let $S \subset \bbZ_{>0}$ and $\epsilon > 0$. 
\begin{enumerate}[\normalfont (a)]
\item Assume that the first limit in \eqref{AsmVagueConv} holds, and $\inf \a + \inf \limits_{n \in S} \min \cb{n}{} > 0$. Then, $\cP$-a.s., there exist a deterministic $\delta > 0$ and a random $K \in \bbZ_{>0}$ such that 
\begin{align*}
|\cGp{m, n}-m\cA{\min \cb{n}{}}| \le \epsilon m \quad \text{ for } m \in \bbZ_{\ge K} \text{ and } n \in S \text{ with } n \le \delta m.  
\end{align*}
\item Assume that the second limit in \eqref{AsmVagueConv} holds, and $\inf \b+\inf \limits_{m \in S} \min \ca{m}{} > 0$. Then, $\cP$-a.s., there exist a deterministic $\delta > 0$ and a random $K \in \bbZ_{>0}$ such that 
\begin{align*}
|\cGp{m, n}-n\cB{-\min \ca{m}{}}| \le \epsilon n \quad \text{ for } n \in \bbZ_{\ge K} \text{ and } m \in S \text{ with } m \le \delta n. 
\end{align*}
\end{enumerate}
\end{thm}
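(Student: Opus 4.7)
The plan is to prove part (a); part (b) follows from the reflection symmetry $(i,j)\mapsto(j,i)$ swapping $\bfa\leftrightarrow\bfb$. The strategy is to combine the concentration bound of Theorem \ref{TEmpShp} with a direct deterministic analysis of $\cEShp{m,n}$ in the near-axis regime. By hypothesis $C:=\inf\bfa+\inf_{n\in S}\min\cb{n}{}>0$, so $|\cIz{m,n}|^{-1}\le C^{-1}$ uniformly for $m\in\bbZ_{>0}$ and $n\in S$. Taking $p=1/100$ in Theorem \ref{TEmpShp} gives, $\cP$-a.s., some $L$ such that $|\cGp{m,n}-\cEShp{m,n}|\le C^{-1}(m+n)^{91/100}$ for $m+n\ge L$; since $n\le\delta m$ forces $(m+n)^{91/100}=O(m^{91/100})=o(m)$, this gap is eventually at most $(\epsilon/2)m$. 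It therefore suffices to prove the deterministic estimate $|\cEShp{m,n}-m\cA{\min\cb{n}{}}|\le(\epsilon/2)m$ uniformly for $n\in S$ with $n\le\delta m$, once $\delta$ is small enough and $m$ is large enough.

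For the upper bound on $\cEShp{m,n}$, I would test the variational formula \eqref{EECenter} at the explicit choice $z_{m,n}:=\min\cb{n}{}-\sqrt{n/m}$, which lies in $\cIz{m,n}$ as soon as $\sqrt{\delta}<C/2$. The $\bfb$-sum is then crudely dominated by $n(\min\cb{n}{}-z_{m,n})^{-1}=\sqrt{mn}\le\sqrt{\delta}\,m$, while the $\bfa$-sum is controlled by the mean value identity
\begin{equation*}
\sum_{i=1}^m(\ca{m}{i}+z_{m,n})^{-1}-\sum_{i=1}^m(\ca{m}{i}+\min\cb{n}{})^{-1}=\sqrt{n/m}\sum_{i=1}^m\frac{1}{(\ca{m}{i}+z_{m,n})(\ca{m}{i}+\min\cb{n}{})},
\end{equation*}
whose right side is bounded by a constant (depending on $C$) times $\sqrt{\delta}\,m$ because both factors in the denominator are bounded below by $C/2$ on the relevant range. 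For the matching lower bound, every $z\in\cIz{m,n}$ satisfies $z<\min\cb{n}{}$, hence $(\ca{m}{i}+z)^{-1}>(\ca{m}{i}+\min\cb{n}{})^{-1}$; discarding the nonnegative $\bfb$-sum yields $\cEShp{m,n}\ge\sum_i(\ca{m}{i}+\min\cb{n}{})^{-1}$.

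These two bounds reduce the problem to the main technical point, namely
\begin{equation*}
\sup_{n\in S}\Bigl|\tfrac{1}{m}\sum_{i=1}^m(\ca{m}{i}+Z_n)^{-1}-\cA{Z_n}\Bigr|\xrightarrow[m\to\infty]{}0, \qquad Z_n:=\min\cb{n}{}.
\end{equation*}
For $n\in S$ one has $Z_n\ge C-\inf\bfa$, and on the range $z\in[C-\inf\bfa,\infty)$ the kernel $a\mapsto(a+z)^{-1}$ is continuous on $[\inf\bfa,\infty)$, uniformly bounded by $1/C$, and vanishes at infinity. Vague convergence $\alpha_m^{\bfa}\to\alpha$ together with the mass bound $\alpha_m^{\bfa}(\bbR)=1$ then yields pointwise convergence of the integrals via a standard truncation argument (approximate the kernel by compactly supported continuous functions and handle the tail using that $\alpha_m^{\bfa},\alpha$ are subprobability measures). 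Uniformity in $n\in S$ is obtained by equicontinuity, since the derivatives of both Cauchy transforms are bounded by $1/C^2$ on the range, combined with the tail bound $|\cA{z}|,|\int(a+z)^{-1}\dd\alpha_m^{\bfa}(a)|\le 1/z$, which handles the behaviour as $z\to\infty$.

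Combining everything, choose $\delta$ small enough that the $\sqrt{\delta}$-terms from the second paragraph contribute at most $\epsilon m/4$, then take $m$ beyond a random $K$ so that the uniform-convergence error in the third paragraph, multiplied by $m$, is at most $\epsilon m/4$. This yields $|\cEShp{m,n}-m\cA{Z_n}|\le\epsilon m/2$, which together with the concentration bound from the first paragraph gives $|\cGp{m,n}-m\cA{\min\cb{n}{}}|\le\epsilon m$ on the required range. The expected main obstacle is the uniform-in-$n$ convergence of the empirical Cauchy transform; everything else is a direct manipulation of the variational formula and of the already-proved concentration estimate.
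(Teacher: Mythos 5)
Your proof is correct and follows essentially the same architecture as the paper's own argument: invoke the concentration estimate from Theorem~\ref{TEmpShp} (exploiting that $|\cIz{m,n}|$ is uniformly bounded below for $n\in S$), obtain a lower bound on $\cEShp{m,n}$ by monotonicity in $z$ and dropping the nonnegative $\bfb$-sum, obtain an upper bound by testing \eqref{EECenter} at a point slightly to the left of $\min\cb{n}{}$, and reduce the remaining work to uniform-in-$n$ convergence of the empirical Cauchy transform $\frac1m\sum_i(\ca mi+z)^{-1}\to\cA z$, handled via compact uniform convergence plus tail decay. The only cosmetic differences are your $m,n$-dependent test point $z_{m,n}=\min\cb n{}-\sqrt{n/m}$ in place of the paper's fixed offset $\min\cb n{}-\eta$ balanced against $\delta=\min\{1,\epsilon\eta\}$, and the fact that you effectively re-derive the content of Lemma~\ref{LCatCont} (truncation plus equicontinuity) rather than citing it; also note your tail bound should read $(\inf\bfa+z)^{-1}$ rather than $1/z$ when $\inf\bfa<0$, though this does not affect the argument.
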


With the assumptions of Theorem \ref{TShpFun}, the approximations in Theorem \ref{TNarShp} can be replaced with simpler one-dimensional linear functions at sites where both coordinates are large but one is small relative to the other.  
\begin{cor}
\label{CThinShp}
Assume \eqref{AsmLB}, \eqref{AsmVagueConv} and \eqref{AsmMinConv}. Let $\epsilon > 0$. Then, $\cP$-a.s., there exist a deterministic $\delta > 0$ and a random $L \in \bbZ_{>0}$ such that the following hold for $m, n \in \bbZ_{\ge L}$.  
\begin{enumerate}[\normalfont (a)]
\item $|\cGp{m, n}-m\cA{\bml}| \le \epsilon m$ if $n \le \delta m$. 
\item $|\cGp{m, n}-n\cB{-\aml}| \le \epsilon n$ if $m \le \delta n$.
\end{enumerate}
\end{cor}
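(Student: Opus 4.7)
The plan is to reduce the corollary to Theorem \ref{TShpFun} (whose hypotheses match those of the corollary) together with a deterministic estimate on the variational formula \eqref{EShpFun} in the thin regime $n \le \delta m$. Since $m+n \le (1+\delta)m$ there, applying Theorem \ref{TShpFun} with auxiliary parameter $\epsilon' = \epsilon/[3(1+\delta)]$ already gives $|\cGp{m,n} - \cShp{m,n}| \le \tfrac{\epsilon}{3} m$ for $m, n \ge L$ with $L$ random. Thus, by the triangle inequality, it suffices to establish the deterministic bound $|\cShp{m,n} - m\cA{\bml}| \le \tfrac{2\epsilon}{3} m$ whenever $n \le \delta m$, for a suitable deterministic $\delta$. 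Part (b) follows from part (a) by the obvious interchange $(m, \bfa, \alpha, \aml) \leftrightarrow (n, \bfb, \beta, \bml)$, which swaps $\cA{\cdot}$ and $\cB{\cdot}$, so I would only treat (a).

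For the lower bound on $\cShp{m,n}$, I would use that on $(-\aml, \bml)$ the function $\cB{\cdot}$ is nonnegative (since $\supp \beta \subset [\bml, \infty)$ by \eqref{Eab}) and $\cA{\cdot}$ is strictly decreasing with $\cA{z} \to \cA{\bml}$ as $z \nearrow \bml$. The limit is finite because $a + \bml \ge \aml + \bml > 0$ for $a \in \supp \alpha$. Therefore
\begin{equation*}
\cShp{m,n} \;=\; \inf_{z \in (-\aml, \bml)} \{m\cA{z} + n\cB{z}\} \;\ge\; \inf_{z \in (-\aml, \bml)} m\cA{z} \;=\; m\cA{\bml}.
\end{equation*}
For the matching upper bound, assuming $\bml < \infty$, I would test \eqref{EShpFun} against a fixed $z_0 = \bml - \eta$ with $\eta \in (0, \aml + \bml)$; both $\cA{z_0}$ and $\cB{z_0}$ are then finite, so $\cShp{m,n} \le m\cA{z_0} + n\cB{z_0}$. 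Continuity of $\cA{\cdot}$ at $\bml$ lets me choose $\eta$ small enough that $\cA{z_0} \le \cA{\bml} + \tfrac{\epsilon}{3}$; with $\eta$ and hence $\cB{z_0}$ fixed, I then pick $\delta$ small enough that $\delta \cB{z_0} \le \tfrac{\epsilon}{3}$. Combining these gives $\cShp{m,n} \le m\cA{\bml} + \tfrac{2\epsilon}{3}m$ whenever $n \le \delta m$, as required.

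The degenerate case $\bml = \infty$ must be handled separately: \eqref{Eab} forces $\supp \beta = \emptyset$, hence $\beta = 0$, and then \eqref{E46} together with $\cA{z} \to 0$ as $z \to \infty$ (dominated convergence, since $\alpha$ is a finite measure) yields $\cShp{x, y} = x\cA{\bml} = 0$. In this case Theorem \ref{TShpFun} alone delivers $|\cGp{m,n}| \le \epsilon'(1+\delta)m \le \epsilon m$ in the thin regime for any fixed $\delta$. No step is genuinely difficult; the only place that demands care is the ordering of quantifiers — first pick $\eta$, then $\delta$, and only then the $\epsilon'$ (and associated random $L$) supplied by Theorem \ref{TShpFun}.
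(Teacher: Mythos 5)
Your proof is correct. The paper does not actually give an explicit proof of Corollary \ref{CThinShp} (Section \ref{SThinRec} proves only Theorem \ref{TNarShp}), so I can only compare against the route that the placement of the corollary suggests.

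The natural reading is that the corollary is meant to follow from Theorem \ref{TNarShp} applied with a tail $S = \bbZ_{\ge L_0}$, together with the continuity $\cA{\min \cb{n}{}} \to \cA{\bml}$ implied by \eqref{AsmMinConv}. You instead go through Theorem \ref{TShpFun} and a deterministic estimate on $\cShp{}$ near the axis, which is a genuinely different route. Your choice has a concrete advantage: Theorem \ref{TNarShp}(a) assumes $\inf \bfa + \inf_{n \in S} \min \cb{n}{} > 0$, and this is \emph{not} implied by the hypotheses of the corollary. Since $\inf \bfa$ is an infimum over all $m$, it can be strictly below $\aml = \lim_m \min \ca{m}{}$, and one can arrange $\inf \bfa + \bml = 0$ while $\aml + \bml > 0$; then no choice of tail $S$ satisfies the theorem's hypothesis, and one would have to rework its proof (restricting $m, n \ge L_0$ and using \eqref{AsmMinConv} rather than $\inf \bfa$ to bound $|\cIz{m,n}|$ below before invoking Theorem \ref{TEmpShp}). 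Your derivation via Theorem \ref{TShpFun} sidesteps this entirely, since that theorem's hypotheses coincide with the corollary's. The deterministic step — $m\cA{\bml} \le \cShp{m,n} \le m\cA{\bml-\eta} + n\cB{\bml-\eta}$ with $\eta$ then $\delta$ chosen in that order, and the degenerate case $\bml = \infty$ (forcing $\beta = 0$) handled separately — is handled correctly, including the nonnegativity of $\cB{}$ and the monotonicity/continuity of $\cA{}$ at $\bml$, both of which follow from \eqref{Eab} and \eqref{AsmLB}.
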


\subsection{Limit shape}
\label{SsLimShp}
Denote the growing cluster associated with the bulk LPP process by 
\begin{align}\cClus{t} = \{(x, y) \in \bbR_{>0}^2: \cGp{\lc x \rc, \lc y \rc} \le t\} \quad \text{ for } t \in \bbR_{\ge 0}. \label{EClus2}\end{align} 
Our purpose is to describe the \emph{limit shape}, namely, the linear scaling limit of \eqref{EClus2} with respect to the Hausdorff metric $\HDis$ (constructed from the Euclidean metric on $\bbR_{\ge 0}^2$), see Appendix \ref{SsHaus}. To this end, define 
\begin{align}
\cLimShp &= \{(x, y)\in \bbR_{>0}^2: \cShp{x, y} \le 1\} \label{ELimShp} \\
\cLimShpBd{w, z} &= \{(x, 0): x \in \bbR_{\ge 0} \text{ and } x \cA{w} \le 1\} \cup \{(0, y): y \in \bbR_{\ge 0} \text{ and } y \cB{z} \le 1\}. \label{ELimShpBd}
\end{align}
for $\alpha, \beta, \aml, \bml$ subject to \eqref{EInfSuppCond}--\eqref{Eab}, $w > -\inf \supp \alpha$ and $z < \inf \supp \beta$. The following result identifies the limit shape in terms of \eqref{ELimShp}-\eqref{ELimShpBd}. The statement involves a truncation that restricts to a fixed bounded set when the limit shape is unbounded. 

\begin{thm}
\label{TLimShp}
Assume \eqref{AsmLB}, \eqref{AsmVagueConv} and \eqref{AsmMinConv}. Let $0 < \epsilon < 1$. Let $C > 0$ and 
\begin{align}
S &= \bigg\{(x, y) \in \bbR_{\ge 0}^2: x \le \frac{C}{\one_{\{\alpha = 0\} \cup \{\bml = \infty\}}} \text{ and } y \le \frac{C}{\one_{\{\beta = 0\} \cup \{\aml = \infty\}}}\bigg\}  \label{ETrU}
\end{align}
with the convention $1/0 = \infty$. Then, 
$\cP$-a.s., 
\begin{align*}
\lim_{t \rightarrow \infty}\cHDis{S \cap \overline{t^{-1}\cClus{t}}}{S \cap (\cLimShp \cup \cLimShpBd{\ams, \bms})} = 0, 
\end{align*}
where 
\begin{align}
\ams = \sup \limits_{m \in \bbZ_{>0}} \min \ca{m}{} \quad \text{ and } \quad \bms = \sup \limits_{n \in \bbZ_{>0}} \min \cb{n}{}. \label{EAB}
\end{align} 
\end{thm}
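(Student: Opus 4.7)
Set $K_t = S \cap \overline{t^{-1}\cClus{t}}$ and $K = S \cap (\cLimShp \cup \cLimShpBd{\ams, \bms})$. It suffices to show, $\cP$-almost surely, that for every $\epsilon > 0$ and every sufficiently large $t$, both containments $K_t \subseteq N_\epsilon(K)$ and $K \subseteq N_\epsilon(K_t)$ hold, where $N_\epsilon(\cdot)$ denotes the open $\epsilon$-neighborhood in the Euclidean metric. The argument decomposes $\bbZ_{>0}^2$ into a bulk region $\{\min(m, n) \ge L\}$ to which Theorem \ref{TShpFun} applies, and two near-axis strips $\{n < L\}$, $\{m < L\}$ to which parts (a) and (b) of Theorem \ref{TNarShp} apply, for a threshold $L = L(\epsilon) \in \bbZ_{>0}$ chosen at the end.

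\textbf{Upper containment.} For $(x, y) \in K_t$ pick $(m, n) \in \bbZ_{>0}^2$ with $|m - tx|, |n - ty| \le 1$ and $\cGp{m, n} \le t$. In the bulk regime $\min(m, n) \ge L$, Theorem \ref{TShpFun} gives $\cShp{m, n} \le t + \epsilon(m + n)$; positive-homogeneity and continuity of $\cShp{}$ then place $(x, y)$ within $O(\epsilon)$ of $\cLimShp$. In the near-$x$-axis strip $n < L$, Theorem \ref{TNarShp}(a) with $S' = [L]$ yields $x\cA{\min \cb{n}{}} \le 1 + O(\epsilon)$; since $\cA{\cdot}$ is strictly decreasing and $\min \cb{n}{} \le \bms$, this forces $x\cA{\bms} \le 1 + O(\epsilon)$, placing $(x, 0)$ near the horizontal piece of $\cLimShpBd{\ams, \bms}$. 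The near-$y$-axis strip $m < L$ is handled symmetrically via Theorem \ref{TNarShp}(b), giving approximation by a point $(0, y)$ on the vertical piece.

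\textbf{Lower containment.} For $(u, v) \in K \cap \cLimShp$ with $\cShp{u, v} < 1$, Theorem \ref{TShpFun} gives $\cGp{\lceil tu \rceil, \lceil tv \rceil} \le t\cShp{u, v} + \epsilon t(u + v) < t$ for $t$ large, so $(\lceil tu \rceil/t, \lceil tv\rceil/t) \in t^{-1}\overline{\cClus{t}}$ tends to $(u, v)$. Points on $\partial \cLimShp \cap \bbR_{>0}^2$ are reached by interior perturbation. For $(u, 0)$ on the horizontal piece of $\cLimShpBd{\ams, \bms}$ with $u\cA{\bms} \le 1$, the definition $\bms = \sup_n \min \cb{n}{}$ supplies, for every $\eta > 0$, a fixed $n_0 \in \bbZ_{>0}$ with $\min \cb{n_0}{} > \bms - \eta$. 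Slightly decrease $u$ to $u'$ satisfying $u'\cA{\bms - \eta} < 1$; Theorem \ref{TNarShp}(a) with $S' = \{n_0\}$ then yields $\cGp{\lceil tu'\rceil, n_0} \le t$ for $t$ large, so $(\lceil tu'\rceil/t, n_0/t) \in t^{-1}\cClus{t}$ converges to $(u', 0)$, which lies within $|u - u'|$ of $(u, 0)$. The vertical axis piece is analogous.

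\textbf{Main obstacle.} The principal technical challenge is synchronizing the three regimes: the bulk threshold $L$ from Theorem \ref{TShpFun} and the slope parameter $\delta$ from Theorem \ref{TNarShp} must be chosen jointly so that every $(m, n)$ with $\cGp{m, n} \le t$ falls into some regime with uniform $O(\epsilon(m+n))$ error, avoiding a gap between bulk and strip controls. A secondary subtlety arises in the degenerate configurations $\alpha = 0$, $\beta = 0$, $\aml = \infty$, or $\bml = \infty$, in which $\cShp{}$ collapses (cf.\ \eqref{E46}, \eqref{EShpFunZero}) and $\cLimShp$ is unbounded; the truncation $S$ is essential here, and one must verify that Hausdorff convergence survives this truncation and remains compatible with the spikes encoded in $\cLimShpBd{\ams, \bms}$. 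Finally, points of $\partial \cLimShp$ meeting $\cLimShpBd{\ams, \bms}$ at the junction with the axes (where $u\cA{\bms}$ is close to $u\cA{\bml}$) require a joint perturbation in $u'$ and $\eta$ to obtain the strict inequality needed in Theorem \ref{TNarShp}.
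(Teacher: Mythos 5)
Your decomposition into a bulk regime controlled by Theorem \ref{TShpFun} and near-axis strips controlled by Theorem \ref{TNarShp}, with the upper and lower containments checked separately, is precisely the strategy the paper carries out (via Lemmas \ref{LConBlk} and \ref{LConBd}). The "main obstacle" you flag — synchronizing $L$, $K$, $\delta$ and handling the corner near the origin where neither regime applies, together with the degenerate cases — is exactly what those lemmas and Lemma \ref{LClusBd} take care of, so the proposal is correct in approach and identifies the right technical work that remains.
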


\begin{rem}
It follows from \eqref{EShpBdLB} and an omitted complementary upper bound (\cite[Lemma 8.1]{emra-janj-sepp-19-shp-long})
\begin{align}
\lim_{t \rightarrow 0} \cShp{x, t} = x \cA{\bml} \quad \text{ and } \quad \lim_{t \rightarrow 0} \cShp{t, y} = y \cB{-\aml} \quad \text{ for } x, y \in \bbR_{>0}. \label{ShpFunBdLim}
\end{align}
These limits imply that the closure of $\cLimShp$ in $\bbR_{\ge 0}^2$ is $\cl{\LimShp}^{\alpha, \beta, \aml, \bml} = \cLimShp \cup \cLimShpBd{\aml, \bml}$. When $\cLimShp$ is bounded, $\cl{\LimShp}^{\alpha, \beta, \aml, \bml}$ coincides with the limit shape in part (a) if and only if 
\begin{align*}
\min \ca{m}{} \le \aml \text{ for } m \in \bbZ_{>0} \text{ when } \beta \neq 0 \quad \text{ and } \quad \min \cb{n}{} \le \bml \text{ for } n \in \bbZ_{>0} \text{ when } \alpha \neq 0. 
\end{align*} 
This should be compared with the case of i.i.d.\ weights, \cite[Theorem 5.1(i)]{mart-04}, where the limit shape is the closure in $\bbR_{\ge 0}^2$ of the sublevel-$1$ set of the shape function. 
\end{rem}

\subsection{Flat segments}
\label{SsFlat}

To describe the finer structure of the limit shape, define the regions 
\begin{align}
\cRv  &= \bigg\{(x, y) \in \bbR_{>0}^2: x \int_{\bbR} \frac{\alpha(\dd a)}{(a-\aml)^2} \le y \int_{\bbR} \frac{\beta(\dd b)}{(b+\aml)^2}\bigg\} \label{Eflatv}\\
\cRh  &= \bigg\{(x, y) \in \bbR_{>0}^2: x \int_{\bbR} \frac{\alpha(\dd a)}{(a+\bml)^2} \ge y \int_{\bbR} \frac{\beta(\dd b)}{(b-\bml)^2}\bigg\} \label{Eflath}\\
\cRc &= \bbR_{>0}^2 \smallsetminus \{\cRh \cup \cRv\}. \nonumber
\end{align}
To avoid trivialities, assume that $\alpha$ and $\beta$ are both nonzero. In particular, $\aml, \bml < \infty$. The regions $\cRh$, $\cRv$ and $\cRc$ are pairwise disjoint. Note that 
\begin{align*}
\cRv \neq \emptyset \quad \text{ if and only if } \quad \int_{\bbR} \frac{\alpha(\dd a)}{(a-\aml)^{2}} < \infty \\
\cRh \neq \emptyset \quad \text{ if and only if } \quad \int_{\bbR} \frac{\beta(\dd b)}{(b-\bml)^{2}} < \infty. 
\end{align*}
(The other two integrals in \eqref{Eflatv}--\eqref{Eflath} are finite by \eqref{AsmLB}). Also, $\cRc \neq \emptyset$ by the Cauchy-Schwarz inequality and assumption \eqref{AsmLB}. 
While the curved part is nonempty with the hypotheses of Theorem \ref{TShpFun}, it is also possible to generate completely flat limit shapes as in \cite[Corollary 3.19]{emra-janj-sepp-19-shp-long}. 

The justification for the following assertions can be seen from the results in Section \ref{SPrShpFun}. 
The function $\cShp{}$ is affine on the regions $\cRh$ and $\cRv$, and is given by 
\begin{align*}
\cShp{x, y} = \begin{cases}\displaystyle x \int_{\bbR} \frac{\alpha(\dd a)}{a-\aml} + y \int_{\bbR} \frac{\beta(\dd b)}{b+\aml} &\quad \text{ for } (x, y) \in \cRv \\\\
\displaystyle x \int_{\bbR} \frac{\alpha(\dd a)}{a+\bml} + y \int_{\bbR} \frac{\beta(\dd b)}{b-\bml} &\quad \text{ for } (x, y) \in \cRh. \end{cases}
\end{align*}
Hence, the boundaries (inside $\bbR_{>0}^2$) of the regions $\cLimShp \cap \cRv$ and $\cLimShp \cap \cRh$ are flat segments. 
On the other hand, the function $\cShp{}$ is strictly concave on $\cRc$ and the boundary of the region $\cLimShp \cap \cRc$ is curved (non-flat). The entire boundary of $\cLimShp$ is the image of a continuously differentiable curve and does not have any corners.

The boundary of the limit shape admits an exact parametrization. Indeed, the curve
\begin{align*}
\Phi(z) = \frac{(\partial \cB{z}, -\partial \cA{z})}{\cA{z} \partial \cB{z}-\cB{z}\partial\cA{z}} \quad \text{ for } z \in (-\aml, \bml) 
\end{align*}
parametrizes the curved part 
The flat boundaries inside $\bbR_{>0}^2$ are the line segments from $(0, \cB{-\aml}^{-1})$ to $\Phi(-\aml)$, and from $(\cA{\bml}^{-1}, 0)$ to $\Phi(\bml)$. Finally, the boundary along the axes are the line segments from the origin to $(0, \cB{-\ams}^{-1})$ and $(\cA{\bms}^{-1}, 0)$. 

\subsection{Spikes and crevices}
\label{SsSpikes}

In the absence of condition \eqref{EBdExt}, large (in macroscopic scale) \emph{spikes/crevices} form near the axes that are not visible in the limit shape. To demonstrate this, consider the case $\beta \neq 0$ and $\min \ca{m}{} \neq \aml$ for some $m \in \bbZ_{>0}$, namely, that the first part of \eqref{EBdExt} fails. The other case is analogous. The precise claim is that, for any $\epsilon > 0$ with $2\epsilon < |\cB{-\min \ca{m}{}}^{-1} -\cB{-\aml}^{-1}|/2$, $\cP$-a.s., there exists a random $T > 0$ such that
\begin{alignat}{2}
\bigg\{\frac{m}{t}\bigg\} \times \bigg[\frac{1}{\cB{-\aml}}+\epsilon, \frac{1}{\cB{-\min \ca{m}{}}}-\epsilon\bigg] &\subset t^{-1}\cClus{t} \smallsetminus \cLimShp \quad &&\text{ if } \min \ca{m}{} > \aml \label{ESpike}\\
\bigg\{\frac{m}{t}\bigg\} \times \bigg[\frac{1}{\cB{-\min \ca{m}{}}}+\epsilon, \frac{1}{\cB{-\aml}}-\epsilon\bigg] &\subset \cLimShp \smallsetminus  t^{-1}\cClus{t}\quad &&\text{ if } \min \ca{m}{} < \aml. \label{ECrevice}
\end{alignat}
for $t \ge T$. The line segments in \eqref{ESpike} and \eqref{ECrevice} can be visualized as a vertical spike or crevice, respectively, in the rescaled cluster near the vertical axis. 

To verify \eqref{ESpike} for example, pick $0 < \delta < \cB{-\aml}$ such that 
\begin{align*}
u = \frac{1}{\cB{-\aml}-\delta} \ge \frac{1}{\cB{-\aml}}-\epsilon \quad \text{ and } \quad v = \frac{1}{\cB{-\min \ca{m}{}}+\delta} \le \frac{1}{\cB{-\min \ca{m}{}}}+\epsilon. 
\end{align*}
By Theorem \ref{TNarShp}, $\cP$-a.s., there exists a random $K \in \bbZ_{>0}$ such that $\cGp{m, n} \le n(v+\delta)^{-1}$ for $n \in \bbZ_{\ge K}$. Therefore, $K \le n \le t(v+\delta)$ implies that $(m, n) \in \cClus{t}$. Consequently, 
\begin{align*}
\{t^{-1}m\}  \times [t^{-1}K, v]  \subset t^{-1}\cClus{t} \quad \text{ for } t \ge T = \max \{Kv^{-1}, \delta^{-1}\}. 
\end{align*}
On the other hand, by \eqref{ShpFunBdLim}, the shape function $\cShp{t^{-1}m, u} \stackrel{t \rightarrow \infty}{\rightarrow} u\cB{-\aml} > 1$. Hence, 
\begin{align*}
\{t^{-1}n\} \times [u, v] \subset t^{-1}\cClus{t} \smallsetminus \cLimShp \quad \text{ for } t \ge T
\end{align*}
after increasing $T$ if necessary. 

\subsection{Centerings for the height and flux processes}
\label{SsTASEPRes}

The limit shape for the bulk LPP process leads to the following explicit centerings for the height and flux processes, respectively, discussed in Subsection \ref{SsTASEP}. Assuming \eqref{AsmLB}, \eqref{Eab} and that $\alpha, \beta \neq 0$, introduce the limiting height and flux functions by  
\begin{alignat}{2}
\clH{y}{t} &= \max \bigg\{\sup_{z \in (-\aml, \bml)} \bigg\{\frac{t-y\cB{z}}{\cA{z}}\bigg\}, 0\bigg\} &&\quad \text{ for } (y, t) \in \bbR_{\ge 0}^2 \label{ELimHeight} \\
\clFlux{x}{t} &= \max \bigg\{\sup_{z \in (-\aml, \bml)}\bigg\{\frac{t-x\cA{z}}{\cA{z}+\cB{z}}\bigg\}, 0\bigg\} &&\quad \text{ for } (x, t) \in \bbR_{\ge 0}^2, \label{ELimFlux}
\end{alignat}
respectively. 

\begin{thm}
\label{TLimFlux}
Assume \eqref{AsmLB}, \eqref{AsmVagueConv}, \eqref{AsmMinConv} and $\alpha, \beta \neq 0$. Let $\epsilon > 0$. Then, $\cP$-a.s., there exists a {\rm(}random{\rm)} $L \in \bbZ_{>0}$ such that 
\begin{align*}
|\cHp{n}{t}-\clH{n}{t}| &\le \epsilon(t+n),  \quad |\cprtp{n}{t}-\clH{n}{t}+n| \le \epsilon (t+n) \text{ and } \\
|\cFluxp{m}{t}-\clFlux{m}{t}| &\le \epsilon (t+m)
\end{align*}
for $m, n \in \bbZ_{\ge L}$ and $t \in \bbR_{\ge 0}$. 
\end{thm}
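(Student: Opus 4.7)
The plan is to invert the shape-function approximation of Theorem \ref{TShpFun}. The limiting height and flux functions defined in \eqref{ELimHeight}, \eqref{ELimFlux} are generalized inverses of $\cShp{}$: one checks directly from the definition \eqref{EShpFun} that, whenever they are positive,
\begin{align*}
\clH{n}{t} = \sup\{x \ge 0 : \cShp{x, n} \le t\} \quad \text{and} \quad \cShp{m + \clFlux{m}{t}, \clFlux{m}{t}} = t,
\end{align*}
with the optimizer $z^*$ in \eqref{ELimHeight}, \eqref{ELimFlux} being the same as the one attaining the infimum in \eqref{EShpFun} at the corresponding point. The first identity is obtained by swapping the two suprema; the second by writing the relation $\clFlux{m}{t}(\cA{z^*} + \cB{z^*}) = t - m\cA{z^*}$ as $(m + \clFlux{m}{t})\cA{z^*} + \clFlux{m}{t}\cB{z^*} = t$ and verifying that $z^*$ indeed minimizes this expression. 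Hence, provided $\cGp{m, n}$ is well-approximated by $\cShp{m, n}$, the processes $\cHp{n}{t}$ and $\cFluxp{m}{t}$ should be well-approximated by $\clH{n}{t}$ and $\clFlux{m}{t}$ obtained by inversion.

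The quantitative inputs are: (i) the lower bound $\cShp{x, y} \ge x\cA{\bml} + y\cB{-\aml}$ from \eqref{EShpBdLB}; (ii) the slope lower bound $\cShp{x_2, y} - \cShp{x_1, y} \ge (x_2 - x_1)\cA{\bml}$ for $x_2 > x_1$, obtained by evaluating the infimum in \eqref{EShpFun} at the $x_2$-optimizer and using that $\cA{}$ is decreasing; and (iii) the $1/\cA{\bml}$-Lipschitz property of $\clH{n}{\cdot}$ in $t$ as a supremum of affine functions with slopes $1/\cA{z} \le 1/\cA{\bml}$. Analogous estimates hold in the $y$-variable of $\cShp{}$ and for the diagonal map $n \mapsto \cShp{m+n-1, n}$, which has slope lower bound $\cA{\bml} + \cB{-\aml}$. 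Under \eqref{AsmLB} and $\alpha, \beta \neq 0$, both $\cA{\bml}$ and $\cB{-\aml}$ are strictly positive and finite, which is crucial.

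Given $\epsilon > 0$, choose $\epsilon' > 0$ small and apply Theorem \ref{TShpFun}. For the height upper bound: if $M = \cHp{n}{t}$ exceeds the threshold $L$, then $\cGp{M, n} \le t$ yields $\cShp{M, n} \le t + \epsilon'(M+n)$, from which (i) gives an a priori linear bound $M \le C(t+n)$, and then (iii) yields $M \le \clH{n}{t + \epsilon'(M+n)} \le \clH{n}{t} + \epsilon'(M+n)/\cA{\bml} \le \clH{n}{t} + \epsilon(t+n)$ once $\epsilon'$ is taken sufficiently small relative to $C$ and $\cA{\bml}$. For the lower bound, set $m^* = \lfloor \clH{n}{t} - \epsilon(t+n) \rfloor$; by (ii), $\cShp{m^*, n} \le t - \cA{\bml}\cdot \epsilon(t+n)$, and Theorem \ref{TShpFun} together with $m^* \le C(t+n)$ then gives $\cGp{m^*, n} \le \cShp{m^*, n} + \epsilon'(m^*+n) \le t$ for $\epsilon'$ small, hence $\cHp{n}{t} \ge m^*$. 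Regimes where $m^*$ or $M$ falls below $L$ are absorbed into the error by enlarging constants, as both quantities are then bounded uniformly. The particle-process bound follows immediately from $\cprtp{n}{t} = \cHp{n}{t} - n + 1$ by absorbing the constant into the error. The flux statement is entirely analogous: the same inversion argument applied to the nondecreasing diagonal map $n \mapsto \cShp{m+n-1, n}$, using $\cShp{m+n-1, n} \ge n(\cA{\bml}+\cB{-\aml})$ for the a priori bound $\cFluxp{m}{t} \le C(t+m)$. The main obstacle is the conversion of the additive error $\epsilon'(m+n)$ in Theorem \ref{TShpFun} into the desired $\epsilon(t+n)$: since $t$ is unbounded and the statement is uniform in $t$, this conversion is not free and crucially uses both the linear a priori bounds from (i) and the positivity of $\cA{\bml}$ and $\cB{-\aml}$ granted by $\alpha, \beta \neq 0$.
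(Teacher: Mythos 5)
Your proposal is correct and follows essentially the same route as the paper: both derive the result by inverting the approximation of Theorem~\ref{TShpFun}, using the a priori linear bounds that come from \eqref{EShpBdLB} together with the positivity of $\cA{\bml}$ and $\cB{-\aml}$ (guaranteed by $\alpha,\beta\neq 0$) to convert the $(m+n)$ error into a $(t+n)$ error. The only difference is cosmetic—you phrase the inversion via generalized inverses of $\cShp{}$ and a Lipschitz bound on $\clH{n}{\cdot}$, whereas the paper manipulates the variational formulas \eqref{ELimHeight}--\eqref{ELimFlux} directly—but the underlying ingredients and the structure of the sandwich argument are identical.
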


The next result approximates the location of particle with a fixed label after a long time. 
\begin{thm}
\label{TNarHeight}
Assume the first limits in \eqref{AsmVagueConv}, \eqref{AsmMinConv} and that $\alpha \neq 0$. Fix $n \in \bbZ_{>0}$ and assume that $\aml + \min \cb{n}{} > 0$. Then, $\cP$-a.s., there exists a {\rm(}random{\rm)} $T \in \bbZ_{>0}$ such that 
\begin{align*}
\bigg|\cHp{n}{t}-\frac{t}{\cA{\min \cb{n}{}}}\bigg| \le \epsilon t \quad \text{ and } \quad \bigg|\cprtp{n}{t}-\frac{t}{\cA{\min \cb{n}{}}}+n\bigg| \quad \text{ for } t \ge T. 
\end{align*}
\end{thm}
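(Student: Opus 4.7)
The plan is to deduce Theorem \ref{TNarHeight} by inverting the one-sided approximation supplied by Theorem \ref{TNarShp}(a) applied with $S = \{n\}$. Intuitively, that result says the last-passage values along row $n$ are asymptotically linear in $m$ with slope $\cA{\min \cb{n}{}}$, so the largest $m$ with $\cGp{m,n} \le t$, which is $\cHp{n}{t}$, is asymptotically $t/\cA{\min \cb{n}{}}$. The particle approximation then follows at once from the identity $\cprtp{n}{t} = \cHp{n}{t} - n + 1$ in \eqref{Eprt}.

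More concretely, set $A := \cA{\min \cb{n}{}}$; this quantity is finite and strictly positive because $\alpha \ne 0$ and $\min \cb{n}{} > -\aml \ge -\inf \supp \alpha$. Given $\epsilon > 0$, I would choose $\epsilon' \in (0, A)$ small enough that $\max\{\tfrac{1}{A-\epsilon'} - \tfrac{1}{A},\, \tfrac{1}{A} - \tfrac{1}{A+\epsilon'}\} \le \epsilon/2$. Theorem \ref{TNarShp}(a) then yields, $\cP$-a.s., a deterministic $\delta > 0$ and a random $K \in \bbZ_{>0}$ such that $|\cGp{m,n} - mA| \le \epsilon' m$ for every $m \ge K_0 := \max\{K, \lceil n/\delta \rceil\}$. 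Inverting: for $t$ large, if $K_0 \le m \le t/(A+\epsilon')$ then $\cGp{m,n} \le m(A+\epsilon') \le t$, forcing $\cHp{n}{t} \ge \lfloor t/(A+\epsilon') \rfloor$; conversely, since no $m \ge K_0$ with $m > t/(A-\epsilon')$ can satisfy $\cGp{m,n} \le t$, we have $\cHp{n}{t} \le \max\{K_0, \lfloor t/(A-\epsilon') \rfloor\}$. (Here the non-monotonicity of $\cGp{m,n}$ in $m$ is harmless, since $\cHp{n}{t}$ is defined via a sup.) Taking $T$ large enough to absorb the $K_0$ cutoff and floor rounding into an extra $\epsilon t/2$ yields $|\cHp{n}{t} - t/A| \le \epsilon t$ for $t \ge T$, and the particle bound is immediate from \eqref{Eprt}.

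The chief technical nuisance I anticipate is a minor hypothesis mismatch: Theorem \ref{TNarShp}(a) with $S = \{n\}$ requires $\inf \a + \min \cb{n}{} > 0$, whereas Theorem \ref{TNarHeight} supplies only the weaker limit-level condition $\aml + \min \cb{n}{} > 0$. Because $\min \ca{m}{} \to \aml$, at most finitely many rows $m$ can violate the strict inequality $\min \ca{m}{} + \min \cb{n}{} > 0$; the natural remedy is to note that Theorem \ref{TNarShp}(a) only ever probes large $m$, so one may restrict attention to $m \ge M$ for $M$ chosen so that $\inf_{m \ge M} \min \ca{m}{} + \min \cb{n}{} > 0$, and the finitely many excluded rows do not affect the $t \to \infty$ behavior of $\cHp{n}{t}$. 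With this small adjustment handled, the rest of the argument is a routine one-dimensional inversion.
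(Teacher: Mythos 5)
Your proposal matches the paper's argument: apply Theorem \ref{TNarShp}(a) with $S=\{n\}$ to get the linear-in-$m$ approximation $|\cGp{m,n}-m\cA{\min\cb{n}{}}|\le \epsilon' m$ along row $n$, invert it for two-sided bounds on $\cHp{n}{t}$, and read off the particle bound from \eqref{Eprt}; the paper's only stylistic simplification is picking $T\ge K/\epsilon$ so that any $m>t(1/A+\epsilon)$ automatically exceeds $K$, avoiding the floor/cutoff bookkeeping you describe. The hypothesis concern you flag at the end is actually moot: under the standing assumption \eqref{AsmParam} and the first limit of \eqref{AsmMinConv}, the condition $\inf\a+\min\cb{n}{}>0$ is \emph{implied} by $\aml+\min\cb{n}{}>0$, since the sequence $(\min\ca{m}{})_{m}$ converges to $\aml>-\min\cb{n}{}$ and is therefore eventually bounded away from $-\min\cb{n}{}$, while \eqref{AsmParam} already guarantees $\min\ca{m}{}>-\min\cb{n}{}$ for each of the finitely many remaining $m$; hence $\inf\a=\inf_m\min\ca{m}{}>-\min\cb{n}{}$, so Theorem \ref{TNarShp}(a) applies directly and your large-$m$ restriction, though harmless, is unnecessary.
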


\section{Concentration bounds for the last-passage times}
\label{SConBdLPP}

As a main step towards the proof of Theorem \ref{TEmpShp}, we begin to derive concentration bounds for $\cG{m, n}$ around $\cEShp{m, n}$ under $\cPf{m, n}$ for each site $(m, n) \in \bbZ_{>0}^2$. 

Writing $\partial_z$ for the $z$-derivative note that, for $m, n \in \bbZ_{\ge 0}$, 
\begin{align}
\partial_z \cEStShp{m, n}{z} &= -\sum_{i=1}^m \frac{1}{(\ca{m}{i}+z)^2} + \sum_{j=1}^n \frac{1}{(\cb{n}{j}-z)^2} \nonumber\\
\partial_z^2 \cEStShp{m, n}{z} &= \sum_{i=1}^m \frac{2}{(\ca{m}{i}+z)^3} + \sum_{j=1}^n \frac{2}{(\cb{n}{j}-z)^3} \label{Eder2}
\end{align}
If $m$ or $n$ is nonzero then the function $z \mapsto \cEStShp{m, n}{z}$ is strictly convex on $\cIz{m, n}$ due to the strict positivity of \eqref{Eder2}. Also, if $m, n > 0$ then $\cEStShp{m, n}{z} \rightarrow \infty$ as $z$ approaches the boundary values $\{-\min \ca{m}{}, \min \cb{n}{}\}$ within $\cIz{m, n}$. Hence, there exists a unique minimizer $\EMin = \cEMin{m, n}$ in \eqref{EECenter}. This is the unique $z \in \cIz{m, n}$ that satisfies the implicit equation 
\begin{align}
\label{EShpMin}
\sum_{i=1}^m \frac{1}{(\ca{m}{i}+z)^2} = \sum_{j=1}^n \frac{1}{(\cb{n}{j}-z)^2}. 
\end{align}
Note from definition \eqref{Estw} that the sums in \eqref{EShpMin} are precisely the variances of $\sG(m, 0)$ and $\sG(0, n)$, respectively,  under $\csPf{m, n}{z}$. Thus, $\EMin$ is the unique $z$-value for which $\sG(m, 0)$ and $\sG(0, n)$ have the same variance given by  
\begin{align}
\label{EMinVar}
\cMinVar{m, n} = \sum_{i=1}^m \frac{1}{(\ca{m}{i}+\EMin)^2} = \sum_{j=1}^n \frac{1}{(\cb{n}{j}-\EMin)^2}. 
\end{align}
The deviations of $\cG{m, n}$ from the centering $\cEShp{m, n}$ are naturally expressed below in terms of this variance.  

For brevity, introduce the functions 
\begin{align*}
\cEA{m}{z} &= \cEStShp{m, 0}{z} = \sum_{i=1}^m \frac{1}{\ca{m}{i}+z} \quad \text{ for } m \in \bbZ_{\ge 0} \text{ and } z \in \bbC \smallsetminus \{-\ca{m}{i}: i \in [m]\},\\
\cEB{n}{z} &= \cEStShp{0, n}{z} = \sum_{j=1}^n \frac{1}{\cb{n}{j}-z} \quad \text{ for } n \in \bbZ_{\ge 0} \text{ and } z \in \bbC \smallsetminus \{\cb{n}{j}: j \in [n]\}
\end{align*}
with the convention that $\cEA{0}{z} = \cEB{0}{z} = 0$ for $z \in \bbC$. 
Because the concentration bounds below for site $(m, n) \in \bbZ_{>0}^2$ depend on the parameters $\a$ and $\b$ only through $\ca{m}{}$ and $\cb{n}{}$, it causes no loss in generality to assume in this section that 
\begin{align*}
\ca{m}{i} = a_i \quad \text{ and } \quad \cb{n}{j} = b_j \quad \text{ for } m, n \in \bbZ_{>0} \text{ and } i \in [m], j \in [n]
\end{align*}
for some real sequences $(a_i)_{i \in \bbZ_{>0}}$ and $(b_j)_{j \in \bbZ_{>0}}$.

We first record a concentration bound for $\csG{m, n}$ around $\cEShp{m, n}$ under $\csPf{m, n}{\EMin}$. 
\begin{lem}
\label{LStLPPCon}
Let $m, n \in \bbZ_{>0}$ and $\EMin = \cEMin{m, n}$. Let $s > 0$ and $p \in \bbZ_{> 0}$. Then there exists a constant $C_p > 0$ {\rm(}depending only on $p${\rm)} such that
\begin{align*}
\csPf{m, n}{\EMin}\bigl\{\,|\sG(m, n)-\cEShp{m, n}| \ge s\bigl(\cMinVar{m, n}\bigr)^{1/2\,}\bigr\} \le C_ps^{-p}. 
\end{align*}
\end{lem}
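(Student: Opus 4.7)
The plan is to use the Burke property to write $\sG(m,n)$ as a sum of $m+n$ independent exponential random variables under $\csPf{m,n}{\EMin}$, and then apply Rosenthal's inequality (plus Markov) to obtain the claimed polynomial tail bound.

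First, I would use the distributional identities in \eqref{Estw}--\eqref{EBurke1} to decompose
\begin{equation*}
\sG(m,n) \;=\; \sum_{i=1}^m \csI{i,0} + \sum_{j=1}^n \csJ{m,j},
\end{equation*}
where, under $\csPf{m,n}{\EMin}$, the boundary weights $\csI{i,0}=\sw(i,0)\sim \Exp(a_i+\EMin)$ and the right-column increments $\csJ{m,j}\sim \Exp(b_j-\EMin)$ are \emph{jointly} independent. The joint independence (across the bottom row and right column, not just within each) is the full Burke property invoked from \cite{bala-cato-sepp} and \cite{emra-16-ecp} in the paragraph after \eqref{EBurke1}. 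Consequently $\cE[\sG(m,n)] = \cEStShp{m,n}{\EMin} = \cEShp{m,n}$, and using the critical equation \eqref{EShpMin} for $\EMin$,
\begin{equation*}
\Var[\sG(m,n)] \;=\; \sum_{i=1}^m (a_i+\EMin)^{-2} + \sum_{j=1}^n (b_j-\EMin)^{-2} \;=\; 2\,\cMinVar{m,n}.
\end{equation*}

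Next, I would bound the central $p$-th moment. Let $X_i = \csI{i,0} - (a_i+\EMin)^{-1}$ and $Y_j = \csJ{m,j} - (b_j-\EMin)^{-1}$; these are independent centered variables with $\|X_i\|_p \le K_p (a_i+\EMin)^{-1}$ and $\|Y_j\|_p \le K_p (b_j-\EMin)^{-1}$ (the $p$-th moment of a centered exponential is bounded by a constant times its mean to the $p$-th power). Rosenthal's inequality yields, for $p\ge 2$,
\begin{equation*}
\cE\bigl|\sG(m,n)-\cEShp{m,n}\bigr|^p \;\le\; C_p\Bigl(\Var[\sG(m,n)]^{p/2} + \max_i\|X_i\|_p^p + \max_j\|Y_j\|_p^p\Bigr).
\end{equation*}
The key observation is that the crucial inequality $\max_i \lambda_i^{-1} \le (\sum_i \lambda_i^{-2})^{1/2}$ gives
\begin{equation*}
\max_i (a_i+\EMin)^{-1} \;\le\; \cMinVar{m,n}^{1/2}\quad\text{and}\quad \max_j (b_j-\EMin)^{-1} \;\le\; \cMinVar{m,n}^{1/2},
\end{equation*}
by \eqref{EMinVar}. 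Hence all three terms are dominated by a constant times $\cMinVar{m,n}^{p/2}$, giving $\cE|\sG(m,n)-\cEShp{m,n}|^p \le C_p'\,\cMinVar{m,n}^{p/2}$. The cases $p=1,2$ reduce to Cauchy--Schwarz and Chebyshev respectively. Markov's inequality then delivers the stated bound with constant $C_p'$.

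The main potential obstacle is ensuring the constant $C_p$ depends only on $p$ and not on $m$, $n$, or the parameter sequences. Step 4 is where this could fail if one tried to use a bound like $\sum_i \|X_i\|_p^p$ (which would introduce explicit $m,n$ dependence). Using Rosenthal in the form with the $\max$ rather than the sum, together with the elementary $\ell^\infty \le \ell^2$ inequality applied to the rates, sidesteps this issue cleanly. As an alternative, one could proceed via an exponential moment bound: for $|t|\le \tfrac{1}{2}\min\{a_i+\EMin, b_j-\EMin\}$ one has $\cE[e^{t(\sG - \cEShp{m,n})}] \le \exp(4t^2\cMinVar{m,n})$, which yields Bernstein-type subexponential tails and hence polynomial tails of every order; but the Rosenthal route is more direct for the statement as given.
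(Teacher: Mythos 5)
Your decomposition and overall strategy are sound, and the paper does essentially the same thing (a moment bound on a sum of independent exponentials followed by Markov, via its Lemma~\ref{LExpCon}), but there is a concrete error in the Rosenthal step. The ``max form'' of Rosenthal's inequality you invoke is not a theorem. Rosenthal bounds $\cE\bigl|\sum X_i\bigr|^p$ by a constant times $\max\bigl\{(\sum_i\cE X_i^2)^{p/2},\ \sum_i\cE|X_i|^p\bigr\}$, with a \emph{sum} of $p$-th moments, not a max; replacing the sum by a max yields a strictly smaller right-hand side that fails in general (take many independent summands each equal to $\pm 1$ with small probability $\delta$ and $0$ otherwise: the true $p$-th central moment of the sum is of order $n\delta$, while the max-form bound is of order $(n\delta)^{p/2}+\delta\ll n\delta$ when $n\delta$ is small). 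Moreover, the concern that motivated the detour is unfounded: the standard sum form causes no $m,n$ dependence, because for $p\ge 2$ the $\ell^p\le\ell^2$ inequality gives
\begin{align*}
\sum_{i=1}^m (a_i+\EMin)^{-p} \le \Bigl(\sum_{i=1}^m (a_i+\EMin)^{-2}\Bigr)^{p/2} = \cMinVar{m,n}^{p/2},
\end{align*}
and likewise for the $b_j$ terms, so the $p$-th moment term in Rosenthal is already dominated by $\cMinVar{m,n}^{p/2}$. This is precisely what the paper's combinatorial Lemma~\ref{LComBnd} accomplishes by hand, and your alternative via exponential/Bernstein bounds would also work.

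One further difference worth noting: you rely on joint independence of the bottom-row boundary weights and the right-column increments (the full Burke property), which is available but stronger than needed. The paper sidesteps this: after writing $\sG(m,n)=\sG(m,0)+(\sG(m,n)-\sG(m,0))$, it applies the triangle inequality and a union bound, then needs only the \emph{marginal} law of each piece together with the distributional identity $\sG(m,n)-\sG(m,0)\stackrel{d}{=}\sG(0,n)$ from the stationarity statement \eqref{EBurke1}, invoking Lemma~\ref{LExpCon} twice. This is more economical and does not require the joint statement. Fix the Rosenthal citation (use the sum form plus $\ell^p\le\ell^2$, or a union bound as in the paper) and your argument is correct.
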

\begin{proof}
Abbreviate $\MinVar = \cMinVar{m, n}$. By the triangle inequality, a union bound and \eqref{EBurke1}, the probability in the statement is at most 
\begin{align*}
&\csPf{m, n}{\EMin}\bigl\{|\sG(m, 0)-\cEA{m}{\EMin}| + |\sG(m, n)-\sG(m, 0)-\cEB{n}{\EMin}| \ge s \sqrt{\MinVar}\bigr\} \\
&\le \csPf{m, n}{\EMin}\bigg\{|\sG(m, 0)-\cEA{m}{\EMin}| \ge \frac{s}{2}\sqrt{\MinVar}\bigg\} + \csPf{m, n}{\EMin}\bigg\{|\sG(m, n)-\sG(m, 0)-\cEB{n}{\EMin}| \ge \frac{s}{2}\sqrt{\MinVar}\bigg\} \\
&= \csPf{m, n}{\EMin}\bigg\{|\sG(m, 0)-\cEA{m}{\EMin}| \ge \frac{s}{2}\sqrt{\MinVar}\bigg\} + \csPf{m, n}{\EMin}\bigg\{|\sG(0, n)-\cEB{n}{\EMin}| \ge \frac{s}{2}\sqrt{\MinVar}\bigg\}.
\end{align*}
Now the result readily follows from definition \eqref{EMinVar} and Lemma \ref{LExpCon}. 
\end{proof}

An immediate consequence is the next right tail bound. 
\begin{lem}
\label{LBulkRTBnd}
Let $m, n \in \bbZ_{>0}$. Let $s > 0$ and $p \in \bbZ_{> 0}$. Then there exists a constant $C_p > 0$ {\rm(}depending only on $p${\rm)} such that 
\begin{align*}
\cPf{m, n}\{\G(m, n)-\cEShp{m, n} \ge s\bigl(\cMinVar{m, n}\bigr)^{1/2}\} \le C_ps^{-p}.
\end{align*}
\end{lem}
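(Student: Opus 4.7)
The plan is to couple the bulk LPP process under $\cPf{m,n}$ with a stationary version under $\csPf{m,n}{\EMin}$ so that $\G(m,n)$ is dominated pointwise by $\sG(m,n)$, and then invoke the just-proved Lemma~\ref{LStLPPCon}. Concretely, I would work on the joint probability space that realizes both measures via the coupling $\w(i,j) = \sw(i,j)$ for $(i,j) \in [m] \times [n]$, with the additional boundary weights $\sw(i,0)$ and $\sw(0,j)$ being jointly independent and drawn from the exponential distributions in \eqref{Estw} with $z = \EMin$. Under $\cP$, the bulk marginal of $\sw$ agrees with $\cPf{m,n}$, so probabilities of events in the bulk weights computed under either measure coincide.

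Next I would observe the pointwise bound $\G(m,n) \le \sG(m,n)$ on this coupled space. This is immediate from \eqref{ELPT}--\eqref{ELPT2}: any up-right path $\pi \in \Pi_{1,1}^{m,n}$ extends to an up-right path $\pi' \in \Pi_{0,0}^{m,n}$ by prepending the two steps $(0,0) \to (1,0) \to (1,1)$, whose weights $\sw(0,0) = 0$ and $\sw(1,0) \ge 0$ add a nonnegative contribution. Taking the path attaining $\G(m,n)$ and extending it this way shows $\sG(m,n) \ge \G(m,n)$.

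Given this inequality, the event $\{\G(m,n) - \cEShp{m,n} \ge s\sqrt{\cMinVar{m,n}}\}$ is contained in $\{\sG(m,n) - \cEShp{m,n} \ge s\sqrt{\cMinVar{m,n}}\}$, and the probability of the latter under $\csPf{m,n}{\EMin}$ is bounded by $C_p s^{-p}$ by the one-sided version of Lemma~\ref{LStLPPCon} (the absolute value inside the probability in that lemma only strengthens the statement relative to a one-sided tail). Since the events depend only on bulk weights, the probability under $\cPf{m,n}$ equals the probability under $\csPf{m,n}{\EMin}$, and the bound transfers to give the claim.

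There is essentially no obstacle here: the domination $\G \le \sG$ under the natural coupling is a standard monotonicity of LPP with nonnegative weights, and Lemma~\ref{LStLPPCon} already supplies the needed concentration. The lemma's value is to transfer the stationary concentration to the bulk, and its proof is a three-line application of the preceding result.
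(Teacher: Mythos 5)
Your proposal is correct and is essentially the paper's proof: couple $\cPf{m,n}$ with $\csPf{m,n}{\EMin}$ on a common space, observe $\G(m,n) \le \sG(m,n)$ a.s.\ because the boundary weights are nonnegative, and invoke Lemma~\ref{LStLPPCon}. The paper states this in two sentences; you merely spell out the path-extension argument behind the domination.
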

\begin{proof}
Write $\EMin = \cEMin{m, n}$. Since $\cPf{m, n}$ is a projection of $\csPf{m, n}{\EMin}$ and $\G(m, n) \le \sG(m, n)$ a.s.\ under $\csPf{m, n}{\EMin}$, the result follows from Lemma \ref{LStLPPCon}.  
\end{proof}

We now turn to developing a corresponding left tail bound. To this end, first note the following right tail bound for the last-passage times defined on paths constrained to enter the bulk at a specific boundary site. 
\begin{lem}
\label{LBulkBdRTBnd}
Let $m, n \in \bbZ_{>0}$ and $\EMin = \cEMin{m, n}$. Let $k \in [m]$, $l \in [n]$, $s > 0$ and $p \in \bbZ_{\ge0}$. Then there exists a constant $C_p > 0$ {\rm(}depending only on $p${\rm)} such that 
\begin{align*}
\csPf{m, n}{\EMin}\{\sG(k, 0) + \G_{k, 1}(m, n) \ge \cEShphs{m-k+1, n}{k-1}+\cEA{k}{\EMin}+s\bigl(\cMinVar{m, n}\bigr)^{1/2}\} &\le C_ps^{-p} \\
\csPf{m, n}{\EMin}\{\sG(0, l) + \G_{1, l}(m, n) \ge \cEShpvs{m, n-l+1}{l-1}+\cEB{l}{\EMin}+s\bigl(\cMinVar{m, n}\bigr)^{1/2}\} &\le C_ps^{-p}. 
\end{align*}
\end{lem}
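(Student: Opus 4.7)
The plan is to split the sum $\sG(k, 0) + \G_{k, 1}(m, n)$ into its two pieces, bound each deviation from its centering separately, and combine via a union bound. Under $\csPf{m, n}{\EMin}$, by \eqref{Estw}, $\sG(k, 0) = \sum_{i=1}^k \csw{i, 0}$ is a sum of $k$ independent $\Exp(a_i + \EMin)$ variables with mean $\cEA{k}{\EMin}$ and variance $\sum_{i=1}^k (a_i + \EMin)^{-2} \le \cMinVar{m, n}$, where the inequality drops nonnegative terms from the full sum $\sum_{i=1}^m (a_i + \EMin)^{-2} = \cMinVar{m, n}$ of \eqref{EMinVar}. Lemma \ref{LExpCon} thus yields
\[
\csPf{m, n}{\EMin}\bigl\{\sG(k, 0) - \cEA{k}{\EMin} \ge (s/2)\bigl(\cMinVar{m, n}\bigr)^{1/2}\bigr\} \le C'_p s^{-p}.
\]
Meanwhile, the bulk weights $\{\csw{i, j}: k \le i \le m,\, 1 \le j \le n\}$ under $\csPf{m, n}{\EMin}$ are independent with $\csw{i, j} \sim \Exp(a_i + b_j)$, so $\G_{k, 1}(m, n)$ has the same law as $\G(m-k+1, n)$ computed from the parameter collection $(\tau_{k-1}\bfa, \bfb)$. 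Applying Lemma \ref{LBulkRTBnd} to this shifted problem gives
\[
\csPf{m, n}{\EMin}\bigl\{\G_{k, 1}(m, n) - \cEShphs{m-k+1, n}{k-1} \ge (s/2)\bigl(\cMinVarhs{m-k+1, n}{k-1}\bigr)^{1/2}\bigr\} \le C'_p s^{-p}.
\]

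The key remaining step is the variance comparison $\cMinVarhs{m-k+1, n}{k-1} \le \cMinVar{m, n}$, which will let me replace the scale in the second display by $\sqrt{\cMinVar{m, n}}$. Let $\EMin' = \cEMinhs{m-k+1, n}{k-1}$ denote the shifted minimizer, so that $\sum_{i=k}^m (a_i + \EMin')^{-2} = \sum_{j=1}^n (b_j - \EMin')^{-2}$. At $z = \EMin$ one has $\sum_{i=k}^m (a_i + \EMin)^{-2} \le \sum_{i=1}^m (a_i + \EMin)^{-2} = \cMinVar{m, n} = \sum_{j=1}^n (b_j - \EMin)^{-2}$. Since $z \mapsto \sum_{i=k}^m (a_i + z)^{-2}$ is strictly decreasing and $z \mapsto \sum_{j=1}^n (b_j - z)^{-2}$ is strictly increasing on their common domain, the shifted minimizer satisfies $\EMin' \le \EMin$, and therefore $\cMinVarhs{m-k+1, n}{k-1} = \sum_{j=1}^n (b_j - \EMin')^{-2} \le \sum_{j=1}^n (b_j - \EMin)^{-2} = \cMinVar{m, n}$.

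Combining the two tail bounds via the deterministic inclusion $\{X + Y \ge c_1 + c_2 + s\sqrt{\cMinVar{m, n}}\} \subset \{X \ge c_1 + (s/2)\sqrt{\cMinVar{m, n}}\} \cup \{Y \ge c_2 + (s/2)\sqrt{\cMinVar{m, n}}\}$ together with the variance comparison delivers the first inequality of the lemma with $C_p$ a suitable multiple of $C'_p$. The second inequality follows by a symmetric argument that shifts $\bfb$ in place of $\bfa$ and swaps the roles of the horizontal and vertical directions. The main (though modest) obstacle is the variance comparison, which rests purely on the monotonicity of the two sums appearing in the defining equation \eqref{EShpMin} for the minimizer.
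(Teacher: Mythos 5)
Your proof is correct and follows essentially the same route as the paper's: split $\sG(k,0)+\G_{k,1}(m,n)$, control the boundary term via Lemma \ref{LExpCon} and the bulk term via Lemma \ref{LBulkRTBnd} applied to the shifted parameter collection, establish the variance comparison $\cMinVarhs{m-k+1,n}{k-1} \le \cMinVar{m,n}$ through the monotonicity of the two sides of \eqref{EShpMin} (hence $\wt\EMin \le \EMin$), and combine by a union bound. The only cosmetic difference is that you spell out the monotonicity argument for the shifted minimizer, where the paper merely asserts it is "clear from definition."
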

\begin{proof}
By Lemma \ref{LExpCon} and since $\sum_{i=1}^k (a_i+\EMin)^{-2} \le \cMinVar{m, n}$, 
\begin{align*}
\csPf{m, n}{\EMin}\{\sG(k, 0)-\cEA{k}{\EMin} \ge s \bigl(\cMinVar{m, n}\bigr)^{1/2}\} \le C_p s^{-p}. 
\end{align*}
Write $\wt{\EMin} = \cEMinhs{m-k+1, n}{k-1}$. The ordering $\wt{\EMin} \le \EMin$ is clear from definition \eqref{EShpMin}. Then  
\begin{align*}
\cMinVarhs{m-k+1, n}{k-1} = \sum_{j=1}^{n} \frac{1}{(b_j-\wt{\EMin})^2} \le \sum_{j=1}^n \frac{1}{(b_j-\EMin)^2} = \cMinVar{m, n}. 
\end{align*}
Hence, by Lemma \ref{LBulkRTBnd}, 
\begin{align*}
\cPf{m, n}\{\G_{k, 1}(m, n)-\cEShphs{m-k+1, n}{k-1} \ge s\bigl(\cMinVar{m, n}\bigr)^{1/2}\} \le C_ps^{-p}. 
\end{align*}
The first of the claimed bounds now follows from the triangle inequality and a union bound. The proof of the second bound is analogous. 
\end{proof}

Some of the statements below require an ordering condition on the parameters $\bfa$ and $\bfb$. This does not limit the scope of the tail bounds of $\G$, however, because the distribution of $\G$ is invariant under permutations of the parameters as stated in the next lemma. 
\begin{lem}
\label{LGDisPerInv}
Let $m, n \in \bbZ_{>0}$. Let $\sigma$ and $\tau$ be permutations on $[m]$ and $[n]$, respectively. Then, for $x \in \bbR$,  
\begin{align*}
\cPf{m, n}\{\G(m, n) \le x\} = \Fbs{\P}{m, n}{\sigma\bfa, \tau\bfb}\{\G(m, n) \le x\}. 
\end{align*}
\end{lem}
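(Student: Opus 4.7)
The lemma is the single input that the authors draw from integrable probability, so I would obtain it as a direct corollary of the Fredholm-determinant representation of the distribution of $\G(m,n)$ developed by Borodin and P\'ech\'e \cite{boro-pech}. They show that for every $x \in \bbR$,
\[
\cPf{m,n}\{\G(m,n) \le x\} \;=\; \det\bigl(I - \mathrm{K}_{m,n,x}\bigr)_{L^2(\Gamma)},
\]
with a trace-class kernel $\mathrm{K}_{m,n,x}(u,v)$ given by a double contour integral in which the parameters enter only through the products $\prod_{i=1}^{m}(z+a_i)^{\pm 1}$ and $\prod_{j=1}^{n}(w-b_j)^{\pm 1}$. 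These are manifestly symmetric in $(a_1,\dots,a_m)$ and in $(b_1,\dots,b_n)$, respectively, so the kernel, its Fredholm determinant, and hence the one-point distribution of $\G(m,n)$ are invariant under the action of the product symmetric group $S_m \times S_n$ on $(\bfa,\bfb)$. Reading off this identity at the permuted parameter $(\sigma\bfa,\tau\bfb)$ yields the claim.

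\textbf{Alternative route.} A fully self-contained alternative avoiding integrable probability is induction on adjacent transpositions, which generate $S_m \times S_n$. An adjacent swap $a_i \leftrightarrow a_{i+1}$ is realized by a measurable bijection on the weight space of the two-row strip $\{i,i+1\}\times[n]$ that exchanges the product law $\bigotimes_{j\in[n]} \Exp(a_i+b_j)\otimes \Exp(a_{i+1}+b_j)$ with its swapped counterpart while preserving the cumulative passage values $(\G(i+1,j))_{j\in[n]}$ along the top of the strip, and hence the full field of last-passage times to the northeast of the strip, including $\G(m,n)$. This is the LPP analogue of the classical reversibility/swap identity for two tandem $M/M/1$ queues with exchanged service rates, and underlies the Burke property invoked in \eqref{EBurke1}.

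\textbf{Main conceptual obstacle.} The only subtlety, common to either route, is that $\G(m,n)$ is \emph{not} a symmetric function of the raw weights $\w(i,j)$: swapping two rows of weights pointwise changes the value of $\G(m,n)$, so the invariance cannot be witnessed deterministically and must come from the joint law. The Fredholm-determinant route absorbs this asymmetry into the explicit symmetry of the correlation kernel, while the local-coupling route has to produce it by an explicit bijection on a two-row strip; both routes ultimately rest on the integrable/exactly solvable structure of the exponential model, which is the reason the statement is singled out as the one integrable input to the paper.
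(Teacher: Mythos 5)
Your main route is exactly the paper's proof: the paper's argument is a one-line citation to the Fredholm-determinant formula \cite[eq.\ (12)]{boro-pech}, from which the invariance is read off because the correlation kernel is a symmetric function of the $\ca{m}{i}$ and separately of the $\cb{n}{j}$, precisely as you explain. Your alternative bijective route (adjacent transpositions realized by a measure-preserving local map on a width-two strip that fixes the right-boundary LPP values) is a genuinely different direction not taken in the paper; two cautions about it are worth recording. First, the queueing analogy you invoke — exchangeability of tandem $\cdot/M/1$ queues, i.e.\ Weber's theorem — concerns \emph{per-queue} service rates, whereas here the rate $\ca{m}{i}+\cb{n}{j}$ depends on both the queue $i$ and the customer $j$; the needed swap identity does hold (one can verify it directly in the $2\times 2$ case, where the cancellation relies crucially on the additive separability of the rates), but it is not the textbook statement and your sketch treats the existence of the bijection as essentially known without constructing it. Second, a small orientation slip: $\{i,i+1\}\times[n]$ is a two-\emph{column} strip in the paper's coordinates, and the boundary along which the cumulative passage values must be preserved is its right edge $\{i+1\}\times[n]$, not its top. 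Since your primary argument coincides with the paper's, these issues do not undermine the proposal, but the alternative route as stated is a sketch rather than a proof.
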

\begin{proof}
One can readily observe this from the formula \cite[(12)]{boro-pech} for the distribution of $\G$.
\end{proof}

Next comes a comparison of $\cEShp{}$ with essentially (up to a single boundary term) the centering for the LPP values considered in Lemma \ref{LBulkBdRTBnd}. 
Denote the distance from the minimizer  $\EMin$ to the boundary of $\cIz{m, n}$ by 
\begin{align}
\cMinDis{m, n} = \min\{a_m^{\min}+\EMin, b_n^{\min}-\EMin\}. \label{EMinDis}
\end{align}
\begin{lem}
\label{LShpComp}
Let $m, n \in \bbZ_{>0}$, $\EMin = \cEMin{m, n}$, $\MinVar = \cMinVar{m, n}$ and $\MinDis = \cMinDis{m, n}$. There exists an absolute constant $c > 0$ such that the following statements hold. 
\begin{enumerate}[\normalfont (a)]
\item Let $k \in [m]$. If $(a_i)_{i \in [m]}$ is nondecreasing then 
\begin{align*}
\cEA{k-1}{\EMin}+\cEShphs{m-k+1, n}{k-1} - \cEShp{m, n}  \le -c\MinDis\MinVar \bigg(\frac{k-1}{m}\bigg)^2. 
\end{align*}
\item Let $l \in [n]$. If $(b_j)_{j \in [n]}$ is nondecreasing then 
\begin{align*}
\cEB{l-1}{\EMin}+\cEShphs{m, n-l+1}{l-1} - \cEShp{m, n} \le -c\MinDis\MinVar \bigg(\frac{l-1}{n}\bigg)^2.
\end{align*}
\end{enumerate}
\end{lem}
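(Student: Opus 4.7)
\textbf{Plan for Lemma~\ref{LShpComp}.} I will sketch part (a); part (b) is symmetric (swap the roles of $\bfa$ and $\bfb$). Abbreviate $\wt{\EMin} = \cEMinhs{m-k+1, n}{k-1}$ and introduce the reduced function
\[ f(z) = \sum_{i=k}^{m} \frac{1}{a_i + z} + \sum_{j=1}^n \frac{1}{b_j - z}, \]
so $\wt{\EMin}$ is its unique minimizer. The identity $\cEShp{m, n} = \cEA{k-1}{\EMin} + f(\EMin)$ rewrites the left-hand side as $f(\wt{\EMin}) - f(\EMin)$. Differentiating termwise and canceling the $b$-sum against \eqref{EShpMin} gives
\[ f'(\EMin) = -\sum_{i=k}^m(a_i+\EMin)^{-2} + \sum_{j=1}^n(b_j-\EMin)^{-2} = \sum_{i=1}^{k-1}(a_i+\EMin)^{-2}. \]
The nondecreasing hypothesis on $(a_i)_{i\in[m]}$ makes $(a_i+\EMin)^{-2}$ nonincreasing in $i$, so the first $k-1$ terms average at least the full average and $f'(\EMin) \ge \tfrac{k-1}{m}\MinVar$.

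Next I will establish the uniform second-derivative bound $f''(z)\le 18\,\MinVar/\MinDis$ for $z\in[\EMin-\MinDis/2,\EMin]$. For the $b$-piece, $z\le\EMin$ gives $(b_j-z)^{-3}\le(b_j-\EMin)^{-3}\le\MinDis^{-1}(b_j-\EMin)^{-2}$, which sums to $\MinDis^{-1}\MinVar$. For the $a$-piece, the inequality $a_i+\EMin\ge a_m^{\min}+\EMin\ge\MinDis$ together with $z\ge\EMin-\MinDis/2$ forces $a_i+z\ge(a_i+\EMin)/2$, hence $(a_i+z)^{-3}\le 8\MinDis^{-1}(a_i+\EMin)^{-2}$, summing to $8\MinDis^{-1}\MinVar$. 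Multiplying by the prefactor $2$ and adding yields the claim.

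To finish, since $\wt{\EMin}$ minimizes $f$, for any $\delta\in(0,\MinDis/2]$ Taylor's theorem combined with the second-derivative bound gives
\[ f(\wt{\EMin}) - f(\EMin) \le f(\EMin-\delta)-f(\EMin) \le -\delta f'(\EMin) + \tfrac{1}{2}M\delta^2, \qquad M := 18\,\MinVar/\MinDis. \]
The choice $\delta = f'(\EMin)/M$ is admissible since $f'(\EMin)\le\MinVar$ forces $\delta\le\MinDis/18\le\MinDis/2$; substituting and invoking the lower bound on $f'(\EMin)$ produces
\[ f(\wt{\EMin}) - f(\EMin) \le -\frac{(f'(\EMin))^2}{2M} \le -\frac{\MinDis\,\MinVar\,(k-1)^2}{36\,m^2}, \]
so $c=1/36$ works. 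The main technical obstacle is the uniform $f''$ bound above: the monotonicity hypothesis is used (only) in Step 1 to route the largest $(a_i+\EMin)^{-2}$ terms to indices $\le k-1$, while quantitative control of $f''$ hinges on $\MinDis$ separating $\EMin$ from the boundary of $\cIz{m,n}$; without this separation $f''$ could blow up on the optimization interval and the argument would break down.
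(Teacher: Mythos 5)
Your proof is correct and follows essentially the same approach as the paper's: both reduce the claim to bounding $f(\EMin-\delta)-f(\EMin)$ for a shift $\delta$ of order $(k-1)\MinDis/m$, using the critical-point equation \eqref{EShpMin} and the monotonicity of $(a_i)$ to get $f'(\EMin)\ge\frac{k-1}{m}\MinVar$, and using $\MinDis$ to control the curvature. The paper works from the algebraic identity \eqref{E35} and bounds term by term with the explicit choice $\delta=c(k-1)\MinDis/m$, while you package the same estimates as a Taylor expansion with a uniform $f''$ bound on $[\EMin-\MinDis/2,\EMin]$; the two routes are equivalent in substance and give a comparable absolute constant.
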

\begin{proof}
We prove only (a) since the proof of (b) is similar. For $z \in \cIz{m, n}$, note the identities 
\begin{align}
\cEStShp{m, n}{z}-\cEShp{m, n} &= \sum_{i=1}^m \frac{1}{a_i+z}-\frac{1}{a_i+\EMin} + \sum_{j=1}^n \frac{1}{b_j-z}-\frac{1}{b_j-\EMin} \nonumber\\
&= (z-\EMin)\bigg\{-\sum_{i=1}^m \frac{1}{(a_i+z)(a_i+\EMin)} + \sum_{j=1}^n \frac{1}{(b_j-z)(b_j-\EMin)}\bigg\} \nonumber\\
&= (z-\EMin)^2 \bigg\{\sum_{i=1}^m \frac{1}{(a_i+z)(a_i+\EMin)^2}+ \sum_{j=1}^n \frac{1}{(b_j-z)(b_j-\EMin)^2}\bigg\}. \label{E35}
\end{align}
The last equality is obtained by adding $\displaystyle \sum_{i=1}^m \frac{1}{(a_i+\EMin)^2}-\sum_{j=1}^n \frac{1}{(b_j-\EMin)^2} = 0$ to the previous line. Next note that, for any $z \in (-a_{k, m}^{\min}, b_n^{\min})$, 
\begin{align*}
\cEA{k-1}{\EMin}+\cEShphs{m-k+1, n}{k-1} &\le \cEA{k-1}{\EMin}+\cEStShphs{m-k+1, n}{z}{k-1} \\
&= \cEA{k-1}{\EMin}-\cEA{k-1}{z}+\cEStShp{m, n}{z} \\
&= -\sum_{i=1}^{k-1}\frac{\EMin-z}{(a_i+\EMin)(a_i+z)} +\cEShp{m, n} \\
&+ (\EMin-z)^2 \bigg(\sum_{i=1}^m \frac{1}{(a_i+\EMin)^2(a_i+z)} + \sum_{j=1}^n \frac{1}{(b_j-\EMin)^2 (b_j-z)}\bigg). 
\end{align*}
Set $z = \EMin-\dfrac{c(k-1)\MinDis}{m}$ for some absolute constant $c \in (0, 1/2]$ to be selected below. This is a legitimate value for $z$ since $k \le m$ and $\MinDis \le a_m^{\min}+\EMin$. Then, using the monotonicity of $(a_i)_{i \in [m]}$ and bounding term by term, one obtains that  
\begin{align*}
&\cEA{k-1}{\EMin}+\cEShphs{m-k+1, n}{k-1} - \cEShp{m, n} \nonumber\\
&\le - \frac{k-1}{m}\sum_{i=1}^{m}\frac{\EMin-z}{(a_i+\EMin)(a_i+z)}+(\EMin-z)^2 \bigg(\sum_{i=1}^m \frac{1}{(a_i+\EMin)^2(a_i+z)} + \sum_{j=1}^n \frac{1}{(b_j-\EMin)^2 (b_j-z)}\bigg) \nonumber\\
&\le -\bigg(\frac{k-1}{m}\bigg)^2c\MinDis \sum_{i=1}^m \frac{1}{(a_i+\EMin)^2}+\bigg(\frac{k-1}{m}\bigg)^2c^2\MinDis^2\bigg(\sum_{i=1}^m \frac{2}{(a_i+\EMin)^3} + \sum_{j=1}^n \frac{1}{(b_j-\EMin)^3}\bigg) \nonumber\\
&\le -\bigg(\frac{k-1}{m}\bigg)^2c\MinDis \sum_{i=1}^m \frac{1}{(a_i+\EMin)^2}+\bigg(\frac{k-1}{m}\bigg)^2c^2\MinDis\bigg(\sum_{i=1}^m \frac{2}{(a_i+\EMin)^2} + \sum_{j=1}^n \frac{1}{(b_j-\EMin)^2}\bigg) \nonumber\\
&= -\bigg(\frac{k-1}{m}\bigg)^2(c-3c^2)\MinDis \sum_{i=1}^m \frac{1}{(a_i+\EMin)^2} = -\bigg(\frac{k-1}{m}\bigg)^2(c-3c^2)\MinDis\MinVar 
\end{align*}
The second inequality above uses $a_i+z \ge \frac{1}{2}(a_i+\EMin)$, which follows from $c\le 1/2$ in the choice of $z$. The subsequent steps use \eqref{EMinVar}. Let $c < 1/3$ and rename $c-3c^2$ as $c$.  
\end{proof}

With the aid of the preceding estimates, one can derive the upper bounds below for the exit probabilities. 

\begin{lem}
\label{LExitPr}
Let $m, n \in \bbZ_{>0}$, $\EMin = \cEMin{m, n}$, $\MinVar = \cMinVar{m, n}$ and $\MinDis = \cMinDis{m, n}$. Let $s > 0$ and $p \in \bbZ_{> 0}$. Then there exist an absolute constant $s_0 > 0$ and a constant $C_p > 0$ {\rm(}depending only on $p${\rm)} such that the following bounds hold subject to the indicated further assumptions. 
\begin{enumerate}[\normalfont (a)]
\item Let $k \in \bbZ$. Assume that $(a_i)_{i \in [m]}$ is nondecreasing, $s \ge s_0$ and $k \ge 1 + \dfrac{sm}{\MinDis^{1/2}\MinVar^{1/4}}$. Then 
\begin{align*}
\csPf{m, n}{\EMin}\{\cMiEh{m, n} \ge k\} \le C_pms^{-p}. 
\end{align*}
\item Let $l \in \bbZ$. Assume that $(b_j)_{j \in [n]}$ is nondecreasing, $s \ge s_0$ and $l \ge 1 + \dfrac{sn}{\MinDis^{1/2}\MinVar^{1/4}}$. Then 
\begin{align*}
\csPf{m, n}{\EMin}\{\cMiEv{m, n} \ge l\} \le C_pns^{-p}. 
\end{align*}
\end{enumerate}
\end{lem}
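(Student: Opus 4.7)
I will prove part (a); part (b) follows by the symmetric argument with the roles of rows and columns, and of Lemma \ref{LShpComp}(a) and (b), exchanged. Abbreviate $\EMin = \cEMin{m, n}$, $\MinVar = \cMinVar{m, n}$ and $\MinDis = \cMinDis{m, n}$, and set the threshold $T = \cEShp{m, n} - s\MinVar^{1/2}$. On the event $\{\cMiEh{m, n} \ge k\}$, identity \eqref{EexitId} gives $\sG(m, n) = \sG(k', 0) + \cGi{m, n}{k', 1}$ for some $k' \in \{k, \dotsc, m\}$; splitting according to whether $\sG(m, n) \le T$ yields the inclusion
\begin{align*}
\{\cMiEh{m, n} \ge k\} \subset \{\sG(m, n) \le T\} \cup \bigcup_{k' = k}^m \{\sG(k', 0) + \cGi{m, n}{k', 1} \ge T\}.
\end{align*}
Lemma \ref{LStLPPCon} immediately bounds $\csPf{m, n}{\EMin}\{\sG(m, n) \le T\}$ by $C_p s^{-p}$.

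For each $k'$ appearing in the union, I would compare the centering $\cEA{k'}{\EMin} + \cEShphs{m-k'+1, n}{k'-1}$ of $\sG(k', 0) + \cGi{m, n}{k', 1}$ to $\cEShp{m, n}$ via Lemma \ref{LShpComp}(a), combined with the identity $\cEA{k'}{\EMin} = \cEA{k'-1}{\EMin} + (a_{k'} + \EMin)^{-1}$, to obtain
\begin{align*}
\cEA{k'}{\EMin} + \cEShphs{m-k'+1, n}{k'-1} - \cEShp{m, n} \le \frac{1}{a_{k'} + \EMin} - c\MinDis\MinVar\biggl(\frac{k'-1}{m}\biggr)^{\!2}.
\end{align*}
The monotonicity of $(a_i)_{i \in [m]}$ gives $(a_{k'} + \EMin)^{-1} \le (a_1 + \EMin)^{-1} \le \MinVar^{1/2}$, the last step using that $(a_1 + \EMin)^{-2}$ is a single summand of $\MinVar$. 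The hypothesis $k' \ge k \ge 1 + sm/(\MinDis^{1/2}\MinVar^{1/4})$ forces $c\MinDis\MinVar((k'-1)/m)^2 \ge cs^2\MinVar^{1/2}$, so
\begin{align*}
T - \cEA{k'}{\EMin} - \cEShphs{m-k'+1, n}{k'-1} \ge (cs^2 - s - 1)\MinVar^{1/2}.
\end{align*}
Fixing the absolute constant $s_0$ so that $cs_0^2 - s_0 - 1 \ge s_0$, the right-hand side is at least $s\MinVar^{1/2}$ whenever $s \ge s_0$, and Lemma \ref{LBulkBdRTBnd} then yields $\csPf{m, n}{\EMin}\{\sG(k', 0) + \cGi{m, n}{k', 1} \ge T\} \le C_p s^{-p}$. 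Summing over the at most $m$ values of $k'$ and adding the left-tail contribution produces the claimed bound $C_p m s^{-p}$.

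The main technical point is the index mismatch between the centering $\cEA{k-1}{\EMin} + \cEShphs{m-k+1, n}{k-1}$ furnished by Lemma \ref{LShpComp} and the centering $\cEA{k}{\EMin} + \cEShphs{m-k+1, n}{k-1}$ required by Lemma \ref{LBulkBdRTBnd}; the off-by-one correction $(a_k + \EMin)^{-1}$ must be absorbed into the $\MinVar^{1/2}$-scale fluctuation budget. This absorption is possible precisely because of the monotonicity of $\bfa$, and this matching is what calibrates both the lower threshold on $k$ and the absolute lower bound $s \ge s_0$ appearing in the statement.
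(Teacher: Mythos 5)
Your proof is correct and follows essentially the same approach as the paper: both decompose the exit event via the boundary-split identity \eqref{EexitId}, use Lemma \ref{LStLPPCon} for the small-$\sG(m,n)$ branch and Lemma \ref{LBulkBdRTBnd} for the split sums, invoke Lemma \ref{LShpComp}(a) plus the absorption of the off-by-one term $(a_{k'}+\EMin)^{-1} \le \MinVar^{1/2}$ to calibrate the gap between centerings, and then take a union over $k' \ge k$. The only cosmetic difference is the choice of threshold: you set $T = \cEShp{m,n} - s\MinVar^{1/2}$ and choose $s_0$ so that the gap dominates $2s\MinVar^{1/2}$, whereas the paper splits symmetrically at $\cEShp{m,n} - \tfrac{cs^2}{4}\sqrt{\MinVar}$ and obtains the slightly stronger per-term bound $C_p s^{-2p}$; both lead to the stated $C_p m s^{-p}$.
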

\begin{proof}
We prove only (a) since the proof of (b) is analogous. One may assume that $k \le m$ since the probability is zero otherwise. Set $s_0 = 2c^{-1/2}$ where $c > 0$ is the absolute constant in Lemma \ref{LShpComp}(a). Then, by the lemma, 
\begin{align*}
\cEShp{m, n}-\cEShphs{m-k+1, n}{k-1}-\cEA{k}{\EMin} &\ge c\MinDis\MinVar \bigg(\frac{k-1}{m}\bigg)^2-\frac{1}{a_k+\EMin} \\
&> cs^2\sqrt{\MinVar}-\sqrt{\MinVar} \ge  \tfrac{1}{2}cs^2 \sqrt{\MinVar}. 
\end{align*}
Then a union bound combined with the tail bounds in Lemmas \ref{LStLPPCon} and \ref{LBulkBdRTBnd} yields
\begin{align*}
\csPf{m, n}{\EMin}\{\cMiEh{m, n} = k\} &= \csPf{m, n}{\EMin}\{\sG(m, n) = \sG(k, 0) + \G_{k, 1}(m, n)\} \\
&\le \csPf{m, n}{\EMin}\{\sG(m, n) \le \cEShp{m, n}-\frac{cs^2}{4}\sqrt{\MinVar}\} \\
&+ \csPf{m, n}{\EMin}\{\sG(k, 0) + \G_{k, 1}(m, n) \ge \cEA{k}{\EMin}+\cEShphs{m-k+1, n}{k-1}+\frac{cs^2}{4}\sqrt{\MinVar}\} \\
&\le C_ps^{-2p}. 
\end{align*}
Now applying the last inequality with another union bound results in  
\begin{equation*}
\csPf{m, n}{\EMin}\{\cMiEh{m, n} \ge k\} \le \sum_{i=k}^m \csPf{m, n}{\EMin}\{\cMiEh{m, n} = i\} \le C_pms^{-2p}, 
\end{equation*}
which implies the claim in (a). 
\end{proof}

The next lemma is a provisional left tail bound for the last-passage times. 
\begin{lem}
\label{LLTailMinCon}
Let $m, n \in \bbZ_{>0}$, $\EMin = \cEMin{m, n}$, $\MinVar = \cMinVar{m, n}$ and $\MinDis = \cMinDis{m, n}$. Let $s > 0$ and $p \in \bbZ_{> 0}$. Then there exist an absolute constant $s_0 > 0$ and a constant $C_p > 0$ {\rm(}depending only on $p${\rm)} such that 
\begin{align*}
\cPf{m, n}\{\G(m, n) \le \cEShp{m, n}-s\MinDis^{-1/4}\MinVar^{3/8}(m+n)^{1/2}\} \le C_p(m+n)s^{-p} \quad \text{ for } s \ge s_0.  
\end{align*}
\end{lem}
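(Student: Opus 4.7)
The plan is to reduce to a monotone setting and then couple $\G$ with its stationary counterpart $\sG$. By Lemma \ref{LGDisPerInv} the distribution of $\G(m,n)$, as well as the scalars $\cEShp{m,n}$, $\cMinVar{m,n}$, $\cMinDis{m,n}$, $\cEMin{m,n}$, depend on the parameters only through the multisets $\{\ca{m}{i}\}$ and $\{\cb{n}{j}\}$, so I may assume the sequences $(\ca{m}{i})_{i\in[m]}$ and $(\cb{n}{j})_{j\in[n]}$ are nondecreasing. Under the canonical coupling $\cw{i,j}=\csw{i,j}$ for $i,j\ge 1$, identity \eqref{EexitId} together with $\G(m,n)\ge\G_{i,1}(m,n)=\sG(m,n)-\sG(i,0)$ when $\cHE(m,n)=i\ge 1$ (and the analogous bound when $\cVE(m,n)\ge 1$; a.s.\ exactly one of the exits is positive) yields the path-wise inequality
\begin{equation*}
\G(m,n)\ge\sG(m,n)-\sG(\cHE,0)-\sG(0,\cVE),
\end{equation*}
with the convention $\sG(0,0)=0$.

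Abbreviate $t=s\MinDis^{-1/4}\MinVar^{3/8}(m+n)^{1/2}$ and set the truncation level $K=1+\lceil s(m\vee n)/(\MinDis^{1/2}\MinVar^{1/4})\rceil$. Using the monotonicity of $i\mapsto\sG(i,0)$ and $j\mapsto\sG(0,j)$, I would split
\begin{align*}
\P\{\G(m,n)\le\cEShp{m,n}-t\}&\le\P\{\sG(m,n)\le\cEShp{m,n}-t/3\}+\P\{\cHE\ge K\}+\P\{\cVE\ge K\}\\
&\quad+\P\{\sG((K-1)\wedge m,0)\ge t/3\}+\P\{\sG(0,(K-1)\wedge n)\ge t/3\}.
\end{align*}
The first term is handled by Lemma \ref{LStLPPCon} once one observes $t/\sqrt{\MinVar}=s(m+n)^{1/2}(\MinDis^2\MinVar)^{-1/8}\ge s(m+n)^{3/8}$, where $\MinDis^2\MinVar\le m\wedge n$ follows from \eqref{EMinVar} and $\MinDis\le a_m^{\min}+\EMin$, $\MinDis\le b_n^{\min}-\EMin$. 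The two exit terms are bounded by applying Lemma \ref{LExitPr} with $s_H=s(m\vee n)/m\ge s\ge s_0$ (when $K\le m$, otherwise the probability is zero) and $s_V=s(m\vee n)/n\ge s$, which gives $\P\{\cHE\ge K\}+\P\{\cVE\ge K\}\le C_p(m+n)s^{-p}$ after using $m s_H^{-p}\le(m/(m\vee n))^p m\le m+n$ and similarly for $n$.

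The main obstacle is controlling the boundary partial sum $\sG((K-1)\wedge m,0)$, since the crude bound $\cEA{K-1}{\EMin}\le(K-1)/\MinDis$ would require $\MinDis^2\MinVar$ to be comparable to $m\wedge n$, which need not hold. The remedy is the Cauchy--Schwarz estimate
\begin{equation*}
\cEA{K-1}{\EMin}^2=\Biggl(\sum_{i=1}^{K-1}\frac{1}{a_i+\EMin}\Biggr)^2\le(K-1)\sum_{i=1}^{K-1}\frac{1}{(a_i+\EMin)^2}\le(K-1)\MinVar,
\end{equation*}
together with $K-1\le 3s(m\vee n)/(\MinDis^{1/2}\MinVar^{1/4})$ for $s\ge s_0$ absolute. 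This gives $\cEA{K-1}{\EMin}\le C s^{1/2}(m\vee n)^{1/2}\MinDis^{-1/4}\MinVar^{3/8}\le Cs^{-1/2}t\le t/4$ once $s_0$ is large enough, whereupon Lemma \ref{LExpCon} applied to $\sG((K-1)\wedge m,0)-\cEA{(K-1)\wedge m}{\EMin}$, combined with $\sqrt{V_{K-1}}\le\sqrt{\MinVar}$ and the lower bound on $t/\sqrt{\MinVar}$, produces the desired $C_p s^{-p}$ bound. Summing the five contributions gives the estimate $\P\{\G(m,n)\le\cEShp{m,n}-t\}\le C_p(m+n)s^{-p}$ claimed in the lemma. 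The Cauchy--Schwarz trick, which precisely matches the $\MinDis^{-1/4}\MinVar^{3/8}$ scaling in the statement, is the key technical point; the rest is a careful orchestration of tail bounds already established.
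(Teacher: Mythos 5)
Your proof is correct and follows essentially the same route as the paper: reduce to monotone parameters via Lemma \ref{LGDisPerInv}, use the coupling and the exit-point decomposition to reduce to tail bounds for the stationary process (Lemmas \ref{LStLPPCon} and \ref{LExitPr}), and control the boundary partial sum $\cEA{K-1}{\EMin}$ via Cauchy--Schwarz, where $\MinDis^2\MinVar \le m \wedge n$ supplies the needed decay in $m+n$. The only (harmless) difference is cosmetic: the paper takes separate truncation levels $k \sim ms^2\MinDis^{-1/2}\MinVar^{-1/4}$ and $l \sim ns^2\MinDis^{-1/2}\MinVar^{-1/4}$ with an $s^2$ scaling, while you take a single level $K \sim s(m\vee n)\MinDis^{-1/2}\MinVar^{-1/4}$; both choices land on the same $\MinDis^{-1/4}\MinVar^{3/8}(m+n)^{1/2}$ error scale.
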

\begin{proof}
Let $k, l \in \bbZ_{>0}$ and $x \in \bbR$. Note from the definitions \eqref{EhorExit}--\eqref{EverExit} that, on the event that $\cMiEh{m, n} \le k$ and $\cMiEv{m, n} \le l$, the inequality  
\begin{align*}\G(m, n) \ge \sG(m, n)-\sG(k, 0)-\sG(0, l)\end{align*} 
holds. Using this with the union bound leads to   
\begin{align}
\cPf{m, n}\{\G(m, n) \le x\} &= \csPf{m, n}{\EMin}\{\G(m, n) \le x\} \nonumber \\
&\le \csPf{m, n}{\EMin}\{\cMiEh{m, n} \ge k+1\} + \csPf{m, n}{\EMin}\{\cMiEv{m, n} \ge l+1\}\label{E8}\\
&+ \csPf{m, n}{\EMin}\{\sG(m, n)-\sG(k, 0)-\sG(0, l) \le x\}. \label{E9}
\end{align}

Let $s \ge \sqrt{s_0}$ where $s_0 > 0$ denotes the constant from Lemma \ref{LExitPr}. Choose 
\begin{align*}
k = \min \{\lc ms^2\MinDis^{-1/2}\MinVar^{-1/4}\rc, m\} \quad \text{ and } \quad l = \min\{\lc ns^2\MinDis^{-1/2}\MinVar^{-1/4}\rc, n\}.
\end{align*} 
By virtue of Lemma \ref{LGDisPerInv}, the sequences $(a_i)_{i \in [m]}$ and $(b_j)_{j \in [n]}$ can be assumed to be nondecreasing without loss of generality. Also since $s^2 \ge s_0$, in the case $s^2\MinDis^{-1/2}\MinVar^{-1/4} \le 1$, Lemma \ref{LExitPr} gives 
\begin{align*}
\eqref{E8} \le C_p(m+n)s^{-p}
\end{align*}
for some constant $C_p > 0$. The last bound also holds trivially in the case $s^2\MinDis^{-1/2}\MinVar^{-1/4} > 1$ because then $\eqref{E8} = 0$ since $k = m$ and $l = n$. 
Set $x = \cEShp{m, n}-y$ where $y = cs\MinDis^{-1/4}\MinVar^{3/8}(m+n)^{1/2}$ and $c > 0$ is an absolute constant to be determined below. Another union bound yields 
\begin{align*}
\eqref{E9} &\le \csPf{m, n}{\EMin}\{\sG(m, n) \le \cEShp{m, n}-y/2\} + \csPf{m, n}{\EMin}\{\sG(k, 0) \ge y/4\} \\
&\qquad 
+ \csPf{m, n}{\EMin}\{\sG(0, l) \ge y/4\}. 
\end{align*}
It follows from definitions \eqref{EMinVar} and \eqref{EMinDis} that $\MinVar \le \min\{m, n\}\MinDis^{-2}$. Using this bound with Lemma \ref{LStLPPCon}, one obtains that 
\begin{align*}
\csPf{m, n}{\EMin}\{\sG(m, n) \le \cEShp{m, n}-y/2\} \le \frac{C_p\MinVar^{p/2}}{y^p} = \frac{C_p\MinDis^{p/4} \MinVar^{p/8}}{s^p (m+n)^{p/2}} \le \frac{C_p}{s^p (m+n)^{3p/8}}  \le \frac{C_p}{s^p}. \label{E20}
\end{align*}
By the Cauchy-Schwarz inequality and and the choices of $k$ and $y$, 
\begin{align}
\cEA{k}{\EMin} = \sum_{i=1}^k \frac{1}{a_i+\EMin} \le \sqrt{k} \biggl\{\sum_{i=1}^k \frac{1}{(a_i+\EMin)^2}\biggr\}^{1/2} \le \sqrt{k}\sqrt{\MinVar} \le s\MinDis^{-1/4}\MinVar^{3/8}m^{1/2} \le \frac{y}{8}, 
\end{align}
provided that $c \ge 8$. Therefore, by Lemma \ref{LExpCon} and since $\sum_{i=1}^k (a_i+\EMin)^{-2} \le \MinVar$ (as noted in \eqref{E20}),
\begin{align*}
\csPf{m, n}{\EMin}\{\sG(k, 0) \ge y/4\} \le \csPf{m, n}{\EMin}\{\sG(k, 0) \ge \cEA{k}{\EMin}+y/8\} \le \frac{C_p}{(y\MinVar^{-1/2})^{p}} = \frac{C_p\MinDis^{p/4}\MinVar^{p/8}}{s^p (m+n)^{p/2}} \le \frac{C_p}{s^p}. 
\end{align*}
The last step uses $\MinVar \le m\Delta^{-2}$ once more and drops the factor $(m+n)^{3p/8}$. In the same vein, 
\begin{align*}
\csPf{m, n}{\EMin}\{\sG(0, l) \ge y/4\} \le C_ps^{-p}. 
\end{align*}
Putting the preceding bounds together results in $\eqref{E9} \le C_p s^{-p}$. Hence, 
\begin{align*}\cPf{m, n}\{\sG(m, n) \le x\} \le \eqref{E8} + \eqref{E9} \le C_p(m+n)s^{-p}.\end{align*} 
This implies the claim upon replacing $s$ with $s/c$ throughout.   
\end{proof}

We next derive a nontrivial left tail bound that does not depend on the location of the minimizer. One ingredient in our argument is the following set of elementary estimates on the minimizer with shifted parameters. 
\begin{lem}
\label{LMinShfEst}
Let $m, n \in \bbZ_{>0}$ and $\EMin = \cEMin{m, n}$.  
\begin{enumerate}[\normalfont (a)]
\item Assume that $m > 1$ and write $\wt{\EMin} = \cEMinhs{m-1, n}{1}$. If $a_1 = a_m^{\min}$ then $a_{2, m}^{\min}+\wt{\EMin} \ge \dfrac{a_1+\EMin}{\sqrt{2}}$. Also, if 
$b_n^{\min}-\EMin \le \dfrac{a_1+\EMin}{\sqrt{2}}$ then 
$b_n^{\min}-\wt{\EMin} \le \sqrt{2}(b_n^{\min}-\EMin)$. 
\item Assume that $n > 1$ and write $\wt{\EMin} = \cEMinvs{m, n-1}{1}$. If $b_1 = b_n^{\min}$ then $b_{2, n}^{\min}-\wt{\EMin} \ge \dfrac{b_1-\EMin}{\sqrt{2}}$. Also, if $a_m^{\min}+\EMin \le \dfrac{b_1-\EMin}{\sqrt{2}}$ then 
$a_m^{\min}+\wt{\EMin} \le \sqrt{2}(a_m^{\min}+\EMin)$. 
\end{enumerate}
\end{lem}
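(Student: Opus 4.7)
Plan: I will prove (a); part (b) is its mirror image under the symmetry $\bfa \leftrightarrow \bfb$, $z \leftrightarrow -z$. Introduce the shorthand $u = a_1 + \EMin$, $v = b_n^{\min} - \EMin$, $V = \cMinVar{m, n}$, and let $\wt{V} = \cMinVarhs{m-1, n}{1}$, i.e.\ the common value of the two sides of \eqref{EShpMin} after dropping the $i=1$ term and replacing $\EMin$ with $\wt{\EMin}$.

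The first step is to establish the foundational inequalities
\begin{align*}
\wt{\EMin} \le \EMin \qquad\text{and}\qquad V - u^{-2} \le \wt{V} \le V.
\end{align*}
Since the $a$-side of \eqref{EShpMin} is strictly decreasing in $z$ and the $b$-side strictly increasing, deleting a positive term from the $a$-side forces the minimizer to shift left, giving $\wt{\EMin} \le \EMin$. The upper bound on $\wt{V}$ follows by writing $\wt{V}$ as a $b$-sum and using $b_j - \wt{\EMin} \ge b_j - \EMin$; the lower bound follows by writing $\wt{V}$ as an $a$-sum and comparing termwise with the corresponding $a$-sum at $\EMin$, which equals $V - u^{-2}$.

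For the first claim of (a) I argue by contradiction. If $a_{2, m}^{\min} + \wt{\EMin} < u/\sqrt{2}$, fix any $i_0 \ge 2$ realising $a_{i_0} = a_{2, m}^{\min}$. The term $(a_{i_0}+\wt{\EMin})^{-2}$ contributes strictly more than $2u^{-2}$ to $\wt{V}$, whereas its unshifted counterpart $(a_{i_0}+\EMin)^{-2}$ is at most $u^{-2}$ because $a_{i_0} \ge a_1$ under the standing assumption $a_1 = a_m^{\min}$. All other indices $i \ge 2$ only widen the gap since $\wt{\EMin} \le \EMin$, and hence $\wt{V} > (V - u^{-2}) + u^{-2} = V$, contradicting the upper sandwich bound. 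For the second claim of (a), isolate in $V - \wt{V} \le u^{-2}$ the single $j_0$-term with $b_{j_0} = b_n^{\min}$ (all remaining $j$-terms are nonnegative), giving $v^{-2} - \wt{v}^{-2} \le u^{-2}$ where $\wt{v} = b_n^{\min} - \wt{\EMin}$. The hypothesis $v \le u/\sqrt{2}$ yields $u^{-2} \le (2v^2)^{-1}$, so $\wt{v}^{-2} \ge (2v^2)^{-1}$, i.e.\ $\wt{v} \le \sqrt{2}\,v$.

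The main (modest) obstacle is the contradiction argument in the third paragraph: it hinges on leveraging $a_{i_0} \ge a_1$ so that the single-term jump $(a_{i_0}+\wt{\EMin})^{-2} - (a_{i_0}+\EMin)^{-2}$ on its own exceeds $u^{-2}$; without this specific input the $\sqrt{2}$ factor could not be traced. Everything else reduces cleanly to monotonicity of each side of \eqref{EShpMin} in $z$.
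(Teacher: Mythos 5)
Your proof is correct and follows essentially the same approach as the paper: it rests on the same three pillars, namely $\wt{\EMin}\le\EMin$, the termwise comparison of the $a$-sums and $b$-sums (giving your sandwich $V-u^{-2}\le\wt{V}\le V$, which the paper uses implicitly in its chain of inequalities), and the extraction of the single $i_0$- or $j_0$-term using $a_1=a_m^{\min}$. The only difference is cosmetic — you phrase the first claim as a contradiction and make the sandwich bound explicit, whereas the paper unwinds the same computation as a direct chain of inequalities.
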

\begin{proof}
We only verify (a) since (b) is entirely analogous. A moment of inspecting \eqref{EShpMin} reveals that $\wt{\EMin} \le \EMin$. Hence, the second and last inequalities in the following derivation. The first inequality is by the assumption $a_1 \le a_{2, m}^{\min}$, and the equality comes again from \eqref{EShpMin}. 
\begin{align*}
&\frac{2}{(a_1+\EMin)^2}- \frac{1}{(a_{2, m}^{\min}+\wt{\EMin})^2} \ge \frac{1}{(a_1+\EMin)^2} + \frac{1}{(a_{2, m}^{\min}+\EMin)^2}-\frac{1}{(a_{2, m}^{\min}+\wt{\EMin})^2} \\ 
&\qquad\qquad
 \ge \sum_{i=1}^m \frac{1}{(a_i+\EMin)^2}-\sum_{i=2}^m \frac{1}{(a_i+\wt{\EMin})^2}  
= \sum_{j=1}^n \frac{1}{(b_j-\EMin)^2}-\sum_{j=1}^n \frac{1}{(b_j-\wt{\EMin})^2} \ge 0. 
\end{align*}
Hence, $a_{2, m}^{\min}+\wt{\EMin} \ge \dfrac{a_1+\EMin}{\sqrt{2}}$ as claimed. Using $\wt{\EMin} \le \EMin$ and \eqref{EShpMin} also yields 
\begin{align*}
\frac{1}{(a_1+\EMin)^2}+\frac{1}{(b_n^{\min}-\wt{\EMin})^2}-\frac{1}{(b_n^{\min}-\EMin)^2} &\ge \frac{1}{(a_1+\EMin)^2}+\sum_{j=1}^n \frac{1}{(b_j-\wt{\EMin})^2}-\sum_{j=1}^n \frac{1}{(b_j-\EMin)^2} \\
&= \sum_{i=2}^m \frac{1}{(a_i+\wt{\EMin})^2}-\sum_{i=2}^m \frac{1}{(a_i+\EMin)^2} \ge 0. 
\end{align*}
Now if $\dfrac{b_n^{\min}-\EMin}{a_1+\EMin} \le \dfrac{1}{\sqrt{2}}$ then $b_n^{\min}-\wt{\EMin} \le \sqrt{2} (b_n^{\min}-\EMin)$. 
\end{proof}

\begin{lem}
\label{LLTail}
Let $m, n \in \bbZ_{>0}$ and $\MinVar = \cMinVar{m, n}$. Let $p \in \bbZ_{>0}$. Then there exist absolute constants $s_0, s_1 > 0$ and a constant $C_p > 0$ {\rm(}depending only on $p${\rm)} such that 
\begin{align*}
\cPf{m, n}\{\G(m, n) \le \cEShp{m, n}-s\MinVar^{1/2}(m+n)^{2/5}\} \le C_ps^{-p}
\end{align*}
provided that $s_0 \le s \le s_1 (a_m^{\min}+b_n^{\min})\MinVar^{1/2}(m+n)^{-2/5}$. 
\end{lem}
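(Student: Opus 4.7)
The plan is to refine the strategy of Lemma \ref{LLTailMinCon} so as to eliminate the spurious $(m+n)$ factor in the probability bound, while simultaneously replacing the $\MinDis^{-1/4}\MinVar^{3/8}(m+n)^{1/2}$ scale of deviation with the cleaner $\MinVar^{1/2}(m+n)^{2/5}$. By Lemma \ref{LGDisPerInv}, the left-hand side of the asserted bound is invariant under permutations of $(a_i)_{i\in[m]}$ and $(b_j)_{j\in[n]}$, so I may assume without loss of generality that both sequences are nondecreasing. This puts us in the setting of Lemmas \ref{LShpComp}, \ref{LExitPr} and \ref{LMinShfEst}.

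I would then imitate the decomposition used in Lemma \ref{LLTailMinCon}: couple $\G$ with the stationary $\sG$ indexed by $\EMin$, and on the event $\{\cMiEh{m,n}\le k\}\cap\{\cMiEv{m,n}\le l\}$ use the bound $\G(m,n)\ge \sG(m,n)-\sG(k,0)-\sG(0,l)$. With $y=s\MinVar^{1/2}(m+n)^{2/5}$ this produces three pieces: two exit-point tails controlled by Lemma \ref{LExitPr} and a concentration tail for $\sG(m,n)$ combined with right-tail bounds on the boundary sums $\sG(k,0)$ and $\sG(0,l)$ coming from Lemma \ref{LExpCon}. The new ingredient is the choice of cutoffs: instead of using $k,l$ proportional to $ms^2\MinDis^{-1/2}\MinVar^{-1/4}$ as in Lemma \ref{LLTailMinCon}, I would pick $k,l$ of size roughly $s^2(m+n)^{4/5}$ (capped by $m,n$), which is exactly what Cauchy--Schwarz requires for $\cEA{k}{\EMin}\le\sqrt{k\MinVar}\le y/8$ and $\cEB{l}{\EMin}\le y/8$. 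The upper bound $s\le s_1(a_m^{\min}+b_n^{\min})\MinVar^{1/2}(m+n)^{-2/5}$ on the parameter range of $s$ is then used exactly to guarantee that these cutoffs keep the boundary contributions within a bounded fraction of the total deviation $y$, and that the parameter $\tilde s$ feeding into Lemma \ref{LExitPr} remains above its absolute threshold $s_0$.

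With these cutoffs, the concentration term is bounded by $\csPf{m,n}{\EMin}\{\sG(m,n)-\cEShp{m,n}\le -y/2\}\le C_p(y/\MinVar^{1/2})^{-p}\le C_p s^{-p}(m+n)^{-2p/5}$ by Lemma \ref{LStLPPCon}, and the two boundary right-tail terms admit analogous $s^{-p}(m+n)^{-2p/5}$ bounds by Lemma \ref{LExpCon} once $\cEA{k}{\EMin}\le y/8$. The delicate piece is the exit-point probability, which Lemma \ref{LExitPr} bounds by $C_p m\tilde s^{-p}$ (and similarly in the vertical direction) with $\tilde s$ determined by matching $k$ to the stipulated expression in Lemma \ref{LExitPr}; the main obstacle of the proof is verifying that, with the above choice of $k$ and under the hypothesized upper bound on $s$, we have $\tilde s$ comfortably larger than $s$ by a factor that makes $m\tilde s^{-p}$ absorb into $C_ps^{-p}$ for all sufficiently large $p$. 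Here I would invoke $\MinDis\ge\MinVar^{-1/2}$ (immediate from $\MinVar\ge(a_m^{\min}+\EMin)^{-2}$) together with the upper bound on $s$ to convert the potentially dangerous factor $m^{p+1}/((m+n)^{4p/5}\MinDis^{p/2}\MinVar^{p/4})$ into a constant after taking $p$ large, using Lemma \ref{LMinShfEst} to further boost $\MinDis$ (by removing the row attaining $a_m^{\min}$, and iterating) in the borderline regime where $\MinDis$ is near its minimum $\MinVar^{-1/2}$. A short case split between this ``rescue'' regime and the regime in which $\MinDis$ is already comparable to $\sqrt{m/\MinVar}$ then completes the proof.
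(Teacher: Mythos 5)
Your plan has a genuine gap in the regime where $\MinDis$ is small, and the ``rescue'' you sketch does not fix it.

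Your primary idea is to redo Lemma~\ref{LLTailMinCon} with new cutoffs $k,l\asymp s^2(m+n)^{4/5}$, feeding these into Lemma~\ref{LExitPr} to bound the exit probabilities. The exit bound forces $\tilde s \approx k\MinDis^{1/2}\MinVar^{1/4}/m$, and you need $m\tilde s^{-p}\le C_p s^{-p}$, which essentially requires $\tilde s\gtrsim s$, i.e.\ $\MinDis\gtrsim m^2/\bigl(s^2(m+n)^{8/5}\MinVar^{1/2}\bigr)\sim (m+n)^{2/5}\MinVar^{-1/2}$. You claim the trivial bound $\MinDis\ge\MinVar^{-1/2}$, combined with the hypothesized upper bound on $s$, converts the dangerous factor to a constant. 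This is false: with $\MinDis=\MinVar^{-1/2}$ the product $\MinDis^{p/2}\MinVar^{p/4}$ is exactly $1$, and you are left with $m^{p+1}/(m+n)^{4p/5}$, which diverges in $m$. Nor does the upper bound $s\le s_1(a_m^{\min}+b_n^{\min})\MinVar^{1/2}(m+n)^{-2/5}$ help, because it constrains only the \emph{length} $|\cIz{m,n}|=a_m^{\min}+b_n^{\min}$, while $\MinDis$ is the distance to the \emph{nearer} endpoint and can be arbitrarily small relative to $|\cIz{m,n}|$.

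The paper's proof does something structurally different that your ``rescue regime'' only vaguely gestures at. Rather than re-deriving the exit estimate with modified cutoffs, it first \emph{regularizes} the problem: it defines $L$ as the smallest number of rows (or columns) one must remove so that the shifted minimizer $\cEMinvs{m,n-L+1}{L-1}$ is at least $\delta$ away from both endpoints (where $\delta=s\MinVar^{-1/2}(m+n)^{2/5}$ is tuned so that $\delta^{-1/4}\MinVar^{3/8}(m+n)^{1/2}$ collapses to $\MinVar^{1/2}(m+n)^{2/5}$). Lemma~\ref{LMinShfEst} is used with this specific $L$ (not by iterating one row at a time) to establish $\cMinDisvs{m,n-L+1}{L-1}\ge\delta$, and only then is Lemma~\ref{LLTailMinCon} applied \emph{as a black box} to the shifted passage from $(1,L)$ to $(m,n)$. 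Two pieces you do not address are then essential: (i) the boundary contribution $\G(1,L-1)$ must be controlled via Lemma~\ref{LExpCon}; and (ii) one must prove the deterministic comparison $\cEShp{m,n}\le\cEShpvs{m,n-L+1}{L-1}+\cEB{L-1}{-a_1}+4\delta\MinVar$, which is a non-trivial computation using the second half of Lemma~\ref{LMinShfEst} and monotonicity, and which is exactly where the error term $\delta\MinVar$ --- hence the $(m+n)^{2/5}$ scale --- enters. Without this comparison there is no way to reassemble the shifted centering and the boundary sum back into $\cEShp{m,n}$. The hypothesized upper bound on $s$ is used only to ensure $\delta\le(\sqrt2-1)(a_1+b_1)$, which is needed for Lemma~\ref{LMinShfEst} and the comparison step, not to control exit probabilities as you suggest.
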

\begin{proof}
Introduce a threshold $0 < \delta \le (a_m^{\min}+b_n^{\min})/2$ to be determined later. Let
\begin{align}
K &= \min\{k \in [m]: \cEMinhs{m-k+1, n}{k-1} \le b_n^{\min}-\delta\} \nonumber\\
L &= \min\{l \in [n]: \cEMinvs{m, n-l+1}{l-1} \ge -a_m^{\min}+\delta\}. \label{Ldef}
\end{align}
Both sets above are nonempty because 
\begin{align*}
\cEMinhs{1, n}{m-1} &\le b_n^{\min}-(a_m+b_n^{\min})/2 \le b_n^{\min}-\delta \\
\cEMinvs{m, 1}{n-1} &\ge -a_m^{\min}+(a_m^{\min}+b_n)/2 \ge -a_m^{\min}+\delta, 
\end{align*} 
where the first inequalities on both lines can be readily seen from definition \eqref{EShpMin}. Also, since the intersection $(-a_m^{\min}, -a_m^{\min}+\delta) \cap (b_n^{\min}-\delta, b_n^{\min})$ is empty, the inequalities $K > 1$ and $L > 1$ cannot both hold. Appealing to the row-column symmetry, let us assume that $K = 1$ for concreteness. 

On account of Lemma \ref{LGDisPerInv}, without loss in generality, the parameters $(a_i)_{i \in [m]}$ and $(b_j)_{j \in [n]}$ can be reordered to be nondecreasing. Work with 
\begin{align}
\delta \le (\sqrt{2}-1)(a_1+b_1) \label{E29}
\end{align} 
here on. Abbreviate $\EMin_l = \cEMinvs{m, n-l+1}{l-1}$ for $l \in [L]$. \emph{Claim:} 
\begin{align}
\label{E21}
\cMinDisvs{m, n-L+1}{L-1} = \min \{a_1+\EMin_L, b_L-\EMin_{L}\} \ge \delta. 
\end{align}
The inequality $a_1+\EMin_L \ge \delta$ holds by definition \eqref{Ldef}. If $L = 1$ then $b_L-\EMin_{L} = b_1-\EMin_1 \ge \delta$ since $K = 1$. Now consider the case $L > 1$. Then $a_1+\EMin_{L-1} < \delta$ and 
\begin{align*}b_{L-1}-\EMin_{L-1} \ge b_1-\EMin_{L-1} = (a_1+b_1)-(a_1+\EMin_{L-1}) \ge (a_1+b_1)-\delta \ge \sqrt{2}\delta \ge \sqrt{2}(a_1+\EMin_{L-1}).\end{align*} 
Hence, invoking Lemma \ref{LMinShfEst}(b) with the parameters $\bfa, \tau_{L-2}\bfb$ and the lattice coordinates $m$ and $n-L+2 > 1$ yields 
\begin{align}
\label{E15}
b_{L}-\EMin_{L} \ge \frac{b_{L-1}-\EMin_{L-1}}{\sqrt{2}} \ge \delta \quad \text{ and } \quad a_1+\EMin_{L} \le \sqrt{2}(a_1+\EMin_{L-1}) \le \sqrt{2}\delta.
\end{align}
The first inequality above completes the verification of \eqref{E21}. 

Write $\MinDis = \cMinDis{m, n}$. One can read off from definition \eqref{EShpMin} that the shifted minimizer $\Fas{\EMin}{m-k+1, n-l+1}{\tau_{k-1}\bfa, \tau_{l-1}\bfb}$ is nonincreasing in $k \in [m]$ and nondecreasing in $l \in [n]$. Hence, the inequality below. 
\begin{align}
\label{E22}
\MinVar = \sum_{i=1}^{m} \frac{1}{(a_i+\EMin_1)^2} \ge \sum_{i=1}^m \frac{1}{(a_i+\EMin_L)^2} = \cMinVarvs{m, n-L+1}{L-1}. 
\end{align}
Now conclude from Lemma \ref{LLTailMinCon} and the bounds in \eqref{E21} and \eqref{E22} that  
\be\begin{aligned}
&\cPf{m, n}\bigl\{\G_{1, L}(m, n) \le \cEShpvs{m, n-L+1}{L-1}-u\delta^{-1/4}\MinVar^{3/8}(m+n)^{1/2}\bigr\} \\
&\qquad\qquad
\le C_p(m+n)u^{-p} 
\label{E23}
\end{aligned} \ee
whenever $u \ge u_0$ for some constants $u_0, C_p > 0$. Let $v > 0$ and $q \in \bbZ_{\ge 0}$. By virtue of Lemma \ref{LExpCon} and since 
\begin{align}
\label{E111}
\MinVar = \sum_{j=1}^n (b_j-\EMin_1)^{-2} \ge \sum_{j=1}^n (a_1+b_j)^{-2}, 
\end{align}
one also has
\begin{align}
\cPf{m, n}\{\G(1, L-1) \le \cEB{L-1}{-a_1}-v\sqrt{\MinVar}\} \le C_q v^{-q} \label{E24}
\end{align}
for some constant $C_q > 0$. Here, interpret $\G(1, 0) = 0$ that arises in the case $L = 1$. 

The next task is to establish that 
\begin{align}
\cEShp{m, n} \le \cEShpvs{m, n-L+1}{L-1} + \cEB{L-1}{-a_1} + 4\delta \MinVar. \label{E18}
\end{align}
Assume that $L > 1$ since \eqref{E18} holds trivially otherwise. One obtains from the second set of inequalities in \eqref{E15} that 
\begin{align}
\label{E17}
b_1-\EMin_{L} = (a_1+b_1)-(a_1+\EMin_{L}) \ge (a_1+b_1)-\sqrt{2}\delta \ge (\sqrt{2}-1)(a_1+b_1) \ge \delta > 0. 
\end{align}
Thus, $\EMin_{L} \in (-a_1, b_1)$ is an admissible $z$-parameter in the next derivation. Furthermore, 
\begin{align}
\label{E110}
\frac{b_j-\EMin_{L}}{a_1+b_j} \ge \frac{b_1-\EMin_{L}}{a_1+b_1} \ge \sqrt{2}-1 \quad \text{ for } j \in [n]
\end{align}
by \eqref{E17} and the monotonicity of $(b_j)_{j \in [n]}$. One now develops from definition \eqref{EECenter}, the second bound in \eqref{E15} and estimates \eqref{E111} and \eqref{E110} that 
\begin{align*}
&\cEShp{m, n}-\cEShpvs{m, n-L+1}{L-1}= \cEShp{m, n}-\cEStShpvs{m, n-L+1}{\EMin_{L}}{L-1}\\
&\qquad\qquad\le \cEStShp{m, n}{\EMin_{L}}-\cEStShpvs{m, n-L+1}{\EMin_{L}}{L-1} = \cEB{L-1}{\EMin_{L}}\\
&\qquad\qquad= \cEB{L-1}{-a_1} + \sum_{j=1}^{L-1} \frac{a_1+\EMin_{L}}{(a_1+b_j)(b_j-\EMin_{L})} \\
&\qquad\qquad\le \cEB{L-1}{-a_1}+ \sqrt{2}\delta\sum_{j=1}^{L-1} \frac{1}{(a_1+b_j)(b_j-\EMin_{L})}  \\
&\qquad\qquad \le \cEB{L-1}{-a_1}+ 4\delta \sum_{j=1}^{L-1} \frac{1}{(a_1+b_j)^2}\\
&\qquad\qquad\le \cEB{L-1}{-a_1} + 4\delta \MinVar, 
\end{align*}
which verifies the claim in \eqref{E18}. 

Now combine \eqref{E23} and \eqref{E24} with a union bound, and then use \eqref{E18} and that $\G(m, n) \stackrel{\text{a.s.}}{\ge} \G(1, L-1) + \G_{1, L}(m, n)$ to arrive at 
\begin{align}
&\cPf{m, n}\{\G(m, n) \le \cEShp{m, n}-y\} \le C_p(m+n)u^{-p} + C_q v^{-q},  \label{E25}
\end{align}
where $y = u\delta^{-1/4}\MinVar^{3/8}(m+n)^{1/2}+v\MinVar^{1/2}+4\delta \MinVar$. The terms on the right-hand side become comparable upon setting $\delta = s\MinVar^{-1/2}(m+n)^{2/5}$ and $v = s(m+n)^{2/5}$ where $s = u^{4/5}$. Thus, $s_0$ can be chosen as $u_0^{4/5}$. For the choice of $\delta$ to meet condition \eqref{E29}, it suffices to have $s \le (\sqrt{2}-1)(a_1+b_1)\MinVar^{1/2}(m+n)^{-2/5}$. 
 With the preceding choices, $y = 6s\MinVar^{1/2}(m+n)^{2/5}$ and the bound in \eqref{E25} reads 
\begin{align*}C_p (m+n)s^{-5p/4} + C_q s^{-q}(m+n)^{-2q/5}.\end{align*} 
Enlarging $p$ by a factor $5/4$ and choosing $q \ge 5p/4$ makes the first term more dominant than the second and results in the bound 
\begin{equation*}
\cPf{m, n}\{\G(m, n) \le \cEShp{m, n}-6s\MinVar^{1/2}(m+n)^{2/5}\} \le C_p(m+n)s^{-p}. 
\end{equation*}
Redefining $s$ to be $s/6$ and adjusting the constant completes the proof. 
\end{proof}

\section{Proof of Theorem \ref{TEmpShp}}
\label{SEmpShp}

\begin{proof}[Proof of Theorem \ref{TEmpShp}]
Let $0 < q < p$. A Borel-Cantelli argument and Lemma \ref{LBulkRTBnd} imply that, $\cP$-a.s., there exists a random $M \in \bbZ_{>0}$ such that  
\begin{align}
\label{E27}
\cGp{m, n} \le \cEShp{m, n} + (m+n)^{q}\bigl(\cMinVar{m, n}\bigr)^{1/2} \quad \text{ whenever } m+n \ge M. 
\end{align}
Since $\EMin$ is at least $\frac{1}{2}|\cIz{m, n}|$ away from one of the endpoints of $\cIz{m, n} = (-\min \ca{m}{}, \min \cb{n}{})$, one concludes from definition \eqref{EMinVar} the trivial bound 
\begin{align}
\cMinVar{m, n} \le 4\max\{m, n\}|\cIz{m, n}|^{-2}. \label{EVarUB}
\end{align}
Inserting this into \eqref{E27} yields 
\begin{align*}
\cGp{m, n} \le \cEShp{m, n} + 2(m+n)^{q+1/2}|\cIz{m, n}|^{-1} \quad \text{ whenever } m+n \ge M. 
\end{align*}
Since $q < p$, the upper bound in the theorem follows. 

Now the lower bound. Since $\Gp \stackrel{\text{a.s.}}{\ge 0}$, it suffices to consider $m, n \in \bbZ_{>0}$ such that 
\begin{align*}
\cEShp{m, n} \ge |\cIz{m, n}|^{-1} (m+n)^{9/10+p}. 
\end{align*}
Then, by the Cauchy-Schwarz inequality, 
\begin{align}
\cMinVar{m, n} &= \frac{1}{2}\bigg\{\sum_{i=1}^m \frac{1}{(a_i+\EMin)^2} + \sum_{j=1}^n \frac{1}{(b_j-\EMin)^2}\bigg\} \ge \frac{1}{2(m+n)}\bigg\{\sum_{i=1}^m \frac{1}{a_i+\EMin} + \sum_{j=1}^n \frac{1}{b_j-\EMin}\bigg\}^2 \nonumber\\
&= \frac{\cEShp{m, n}^2}{2(m+n)} \ge \frac{1}{2}|\cIz{m, n}|^{-2}(m+n)^{4/5+2p}, \nonumber
\end{align}
where $\EMin = \cEMin{m, n}$. Hence, 
\begin{align}
|\cIz{m, n}| \MinVar^{1/2}(m+n)^{-2/5} &\ge \frac{1}{\sqrt{2}} (m+n)^{p}. \label{E55}
\end{align}
Let $s_0, s_1 > 0$ denote the absolute constants in Lemma \ref{LLTail}. By \eqref{E55} and since $0 < q < p$, 
\begin{align*}
s_0 \le (m+n)^{q} \le s_1 |\cIz{m, n}| \MinVar^{1/2}(m+n)^{-2/5} \quad \text{ whenever } m+n \ge N_0
\end{align*}
for some sufficiently large constant $N_0 \in \bbZ_{>0}$. Also, by \eqref{EVarUB}, 
\begin{align}
\MinVar^{1/2}(m+n)^{2/5} \le 2|\cIz{m, n}|^{-1}(m+n)^{9/10}. \label{E56}
\end{align}
Let $u > 0$. By \eqref{E56} and an application of Lemma \ref{LLTail} with $s = (m+n)^q$, one obtains that 
\begin{align*}
&\cP\{\cGp{m, n} \le \cEShp{m, n}-2|\cIz{m, n}|^{-1} (m+n)^{9/10+q}\} \\
&\le \cP\{\cGp{m, n} \le \cEShp{m, n}-(m+n)^q (\MinVar^{1/2}(m+n)^{2/5})\} \le \frac{C}{(m+n)^{qu}}
\end{align*}
whenever $m+n \ge N_0$ and for some constant $C > 0$ dependent only on $u$. The last bound is summable over $\bbZ_{>0}^2$ provided that $u$ is sufficiently large. 
Hence, by the Borel-Cantelli lemma, $\cP$-a.s., there exists a random $N \ge N_0$ such that 
\begin{align*}
\cGp{m, n} \ge \cEShp{m, n} - 2|\cIz{m, n}|^{-1} (m+n)^{9/10+q} \quad \text{ whenever } m+n \ge N.  
\end{align*} 
This implies the claimed lower bound since $q < p$. 
\end{proof}

\section{Proof of Theorem \ref{TRains}}
\label{SPrRains}

\begin{proof}[Proof of Theorem \ref{TRains}]
Fix $n \in \bbZ_{>0}$ for the moment. Since the weights in \eqref{ERainsWp} are a.s.\ nonnegative, the $\G_n$-process is a.s.\ coordinatewise nondecreasing. Therefore, $\cP$-a.s.,  
\begin{align}
\sup_{i, j \in \bbZ_{>0}} \G_n(i, j) = \sup_{l \in \bbZ_{>0}} \G_{n}(ln, ln) = \lim_{l \rightarrow \infty} \G_{n}(ln, ln)=\mathcal{G}_n \label{E91}
\end{align}
where the last equality defines  $\mathcal{G}_n$. 

Consider real parameters $\wt{\a}^n$ and $\wt{\b}^n$ subject to \eqref{AsmParam} (with $\a = \wt{\a}^n$ and $\b = \wt{\b}^n$) that also satisfy
\begin{align*}
\wt{\a}_{ln}^n(i) = a_{\lc i/n \rc} \quad \text{ and } \wt{\b}_{ln}^n(i) = b_{\lc i/n \rc} \quad \text{ for } l \in \bbZ_{>0} \text{ and } i \in [ln]. 
\end{align*}
Then $\w_n(i, j) = \w^{\wt{\a}^n, \wt{\b}^n}_{ln, ln}(i, j)$ for $i, j \in [ln]$ where the right-hand side is given by \eqref{EWPdef} (with $\w_{m, n}(i, j) = \w(i, j)$). Hence, 
\begin{align}\G_n(ln, ln) = \G^{\wt{\a}^n, \wt{\b}^n}(ln, ln) \quad \text{ for } l \in \bbZ_{>0}. \label{E92} \end{align} 

From definition \eqref{EECenter}, one has
\begin{align*}
\EShp^{\wt{\a}^n, \wt{\b}^n}(ln, ln) = n \inf_{z \in (-a_{l}^{\min}, b_{l}^{\min})} \bigg\{\sum_{i=1}^{l} \frac{1}{a_i+z} + \sum_{j=1}^{l} \frac{1}{b_j-z}\bigg\} = n\cEShp{l, l}. 
\end{align*}
Similarly, by definition \eqref{EMinVar}, 
\begin{align}
\MinVar^{\wt{\a}^n, \wt{\b}^n}(ln, ln) = n \cMinVar{l, l} \quad \text{ for } l \in \bbZ_{>0}. \label{E96}
\end{align}
Since $(-a_{l}^{\min}, b_{l}^{\min}) \supset (-\inf \a,  \inf \b)$, it is clear from \eqref{ERainsCent} that
\begin{align}
\label{E90}
\cEShp{l, l} \le \cEShp{\infty, \infty} \quad \text{ for } l \in \bbZ_{>0}. 
\end{align}
Write $z_0 = (\inf \a - \inf \b)/2$ and $\EMin_l = \cEMin{l, l}$ for $l \in \bbZ_{>0}$. Recalling definitions \eqref{EMinVar} and 
\eqref{EMinDis}, note also that 
\begin{align}
\cMinDis{l, l}^{-2} &= \max \{(a_{l}^{\min} + \EMin_l)^{-2}, (b_{l}^{\min}-\EMin_l)^{-2}\} \le \cMinVar{l, l} = \sum_{i=1}^{l} \frac{1}{(a_i+\EMin_l)^2} = \sum_{j=1}^{l} \frac{1}{(b_j-\EMin_l)^2} \nonumber \\ 
&\le \max \bigg\{\sum_{i=1}^l \frac{1}{(a_i+z_0)^2}, \sum_{j=1}^l \frac{1}{(b_j-z_0)^2}\bigg\} \label{E197}\\ 
&\le \frac{2}{\inf \a + \inf \b} \max \bigg\{\sum_{i=1}^\infty \frac{1}{a_i+z_0}, \sum_{j=1}^\infty \frac{1}{b_j-z_0}\bigg\} \label{E97}
\end{align}
Bound \eqref{E197} follows upon considering the cases $\EMin_l \le z_0$ and $\EMin_l \ge z_0$ separately. 
The last inequality uses that $z_0$ is the midpoint of $(-\inf \a, \inf \b)$. Denote the quantity in \eqref{E97} by $c_0 > 0$. By assumption \eqref{AsmRainsConv}, $c_0 < \infty$. 
Now, because $\cMinDis{l, l} \ge c_0^{-1/2}$ for $l \in \bbZ_{>0}$, there exist $\delta > 0$ and $L_0 \in \bbZ_{>0}$ such that $\EMin_l \in I = (-\inf \a+\delta, \inf \b-\delta)$ for $l \ge L_0$. Then
\begin{align}
\cEShp{\infty, \infty} &\le \inf_{z \in I} \cEStShp{\infty, \infty}{z} \nonumber \\
&\le \inf_{z \in I} \cEStShp{l, l}{z} + \sum_{i=l+1}^\infty \frac{1}{a_i-\inf \a + \delta} + \sum_{j=l+1}^{\infty} \frac{1}{b_j-\inf \b + \delta} \label{E94}
\end{align}
The first term equals $\cEShp{l, l}$ for $l \ge L_0$ while both series vanish as $l \rightarrow \infty$ in \eqref{E94}. Combining with \eqref{E90}, one concludes that 
\begin{align}
\lim_{l \rightarrow \infty} \cEShp{l, l} = \cEShp{\infty, \infty}. \label{E95}
\end{align}

Let $p > 0$. Apply Lemma \ref{LBulkRTBnd} using \eqref{E90} and \eqref{E97} to obtain
\begin{align*}
&\cP\{\G_{n}(ln, ln) \ge n \cEShp{\infty, \infty} + sc_0^{1/2}n^{1/2}\}  \\
&\le \cP\{\G_{n}(ln, ln) \ge n \cEShp{l, l} + s(\cMinVar{l, l})^{1/2}n^{1/2}\} \le C_ps^{-p} \quad \text{ for } s > 0
\end{align*}
for some constant $C_p > 0$ depending only on $p$. Passing to the limit $l \rightarrow \infty$ results in 
\begin{align}
\cP\{\mathcal{G}_n > n\cEShp{\infty, \infty} + sc_0^{1/2}n^{1/2}\} \le C_ps^{-p} \quad \text{ for } s > 0 \label{E98}
\end{align}
on account of \eqref{E91}. Let $\eta > 0$. By \eqref{E98} and the Borel-Cantelli argument, $\cP$-a.s., there exists a random $L_1 \in \bbZ_{>0}$ such that 
\begin{align*}
\mathcal{G}_n \le n\cEShp{\infty, \infty}  + n^{1/2+\eta} \quad \text{ for } n \ge L_1.
\end{align*}
In particular, with $\eta < 1/2$, 
\begin{align}\limsup_{n \rightarrow \infty} n^{-1}\mathcal{G}_n \le \cEShp{\infty, \infty}. \label{E99}\end{align}

Note from the bounds culminating in \eqref{E97} that 
\begin{align*}
\max \{\cMinDis{l, l}^{-2}, \cMinVar{l, l}\} \le c_0 \quad \text{ for } l \in \bbZ_{>0}, 
\end{align*}
which justifies the first step in the next derivation. 
\begin{align*}
&\cP\{\G_{n}(ln, ln) \le n\cEShp{l, l}-sc_0^{1/2}l^{1/2}n^{7/8}\} \\ 
&\le \cP\{\G_{n}(ln, ln) \le n\cEShp{l, l}-s(\cMinDis{l, l})^{-1/4}(\cMinVar{l, l})^{3/8}l^{1/2}n^{7/8}\} \\
&= \cP\{\G^{\wt{\a}^n, \wt{\b}^n}(ln, ln) \le \EShp^{\wt{\a}^n, \wt{\b}^n}(ln, ln)-s(\MinDis^{\wt{\a}^n, \wt{\b}^n}(ln, ln))^{-1/4}(\MinVar^{\wt{\a}^n, \wt{\b}^n}(ln, ln))^{3/8}l^{1/2}n^{1/2}\} \\
&\le C_plns^{-p} \quad \text{ for } s \ge s_0.  
\end{align*}
The second step is by \eqref{E92}-\eqref{E96}. The final bound is an application of Lemma \ref{LLTailMinCon} and $s_0 > 0$ denotes the absolute constant from there. Hence, by the Borel-Cantelli, $\cP$-a.s.,
\begin{align*}
\mathcal{G}_n \ge \G_n(ln, ln) \ge n\cEShp{l, l}-l^{1/2+\eta}n^{7/8+ \eta} \quad \text{ whenever } l+n \ge L_1
\end{align*}
after increasing $L_1$ if necessary. With $\eta < 1/8$, one obtains 
\begin{align}
\liminf_{n \rightarrow \infty} n^{-1}\mathcal{G}_n \ge \cEShp{l, l} \label{E100}
\end{align}
By \eqref{E95}, letting $l \rightarrow \infty$, the lower bound in \eqref{E100} matches the upper bound in \eqref{E99}. 
\end{proof}

\section{Proof of Theorem \ref{TShpFun}} 
\label{SPrShpFun}

The proof is based on Corollary \ref{CEmpShp} and an approximation of the centering $\cEShp{}$ by the shape function. 

Let $\alpha$ and $\beta$ be finite, nonnegative Borel measures on $\bbR$. Fix $x, y > 0$. The derivatives of \eqref{EStShpFun} are given by 
\begin{align*}
\partial_z^n \cStShp{x, y}{z} = x(-1)^n n!\int_\bbR \frac{\alpha(\dd a)}{(a+z)^{n+1}} + y n! \int_{\bbR} \frac{\beta(\dd b)}{(b-z)^{n+1}} \quad \text{ for } z \in \bbC \smallsetminus (-\supp \alpha \cup \supp \beta)
\end{align*}
on account of \eqref{E42}. Assume that $\alpha$ and $\beta$ are nonzero, that \eqref{EInfSuppCond} holds, and suppose  $\aml, \bml \in \bbR$ satisfy \eqref{AsmLB} and \eqref{Eab}. Then 
$
\partial_z^2 \cStShp{x, y}{z} > 0 \text{ for } z \in (-\aml, \bml). 
$
Hence, the function $z \mapsto \cStShp{x, y}{z}$ is strictly convex on $(-\aml, \bml)$. Consequently, there exists a unique $z$-value, denoted with $\cMaMin{x, y}$, in the closed interval $[-\aml, \bml]$ such that the infimum in \eqref{EShpFun} is given by  $\cShp{x, y} = \cStShp{x, y}{z}$. Examining the sign of the first derivative reveals that 
\begin{align}
\cMaMin{x, y} = \begin{cases} -\aml \quad \text{ if } \displaystyle x\int_{\bbR} \frac{\alpha(\dd a)}{(a-\aml)^{2}}  \le y \int_{\bbR} \frac{\beta(\dd b)}{(b+\aml)^{2}}  \\ \\ \bml \quad \text{ if } x \displaystyle \int_{\bbR} \frac{\alpha(\dd a)}{(a+\bml)^{2}} \ge y \int_{\bbR} \frac{\beta(\dd b)}{(b-\bml)^{2}} \\ \\ 
\text{otherwise, the unique $z$-value with } \displaystyle x\int_{\bbR} \frac{\alpha(\dd a)}{(a+z)^2} = y\int_{\bbR} \frac{\beta(\dd b)}{(b-z)^2} . \label{EMaMin}
\end{cases}
\end{align} 
 
Recall the definition of $\cEMin{m, n}$ as the unique minimizer in \eqref{EECenter}, described in \eqref{EShpMin}. 

\begin{lem}
\label{LMinConv}
Assume \eqref{AsmLB}, \eqref{AsmVagueConv}, \eqref{AsmMinConv} and that $\alpha$ and $\beta$ are not zero measures. Let $\epsilon > 0$. Then there exists $L \in \bbZ_{>0}$ such that 
\begin{align*}
|\cEMin{m, n}-\cMaMin{m, n}| < \epsilon \quad \text{ whenever } m, n \ge L. 
\end{align*}
\end{lem}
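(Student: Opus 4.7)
My plan is to argue by contradiction, exploiting the compactness of $[-\aml, \bml]$ (guaranteed by $\alpha, \beta \neq 0$ together with \eqref{AsmLB} and \eqref{Eab}). Suppose the claim fails: there exist $\epsilon_0 > 0$ and sequences $m_k, n_k \to \infty$ with $|\cEMin{m_k, n_k}-\cMaMin{m_k, n_k}| \ge \epsilon_0$. By inspection of \eqref{EMaMin}, $\cMaMin{m, n} = \cMaMin{x_{m,n}, 1-x_{m,n}}$ for $x_{m,n} := m/(m+n)$; by \eqref{AsmMinConv}, $\cEMin{m_k, n_k} \in (-\min \ca{m_k}{}, \min \cb{n_k}{})$ is eventually confined to a slight enlargement of $[-\aml, \bml]$. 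Passing to a subsequence, I may assume $x_k := x_{m_k, n_k} \to x \in [0, 1]$, $\cEMin{m_k, n_k} \to z^*$, and $\cMaMin{m_k, n_k} \to w^*$ in $[-\aml, \bml]$. Denoting by $z_\infty$ the unique minimizer on $[-\aml, \bml]$ of the strictly convex function $H_x(z) := x\cA{z} + (1-x)\cB{z}$ (which coincides with $\cMaMin{x, 1-x}$ when $x \in (0, 1)$, equals $-\aml$ when $x = 0$, and $\bml$ when $x = 1$, using \eqref{EMaMin} and monotonicity of $\cA{}, \cB{}$), it suffices to prove $z^* = w^* = z_\infty$ to reach a contradiction.

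The analytical input is the uniform convergence
\begin{equation*}
\int \frac{\alpha_m^{\bfa}(\dd a)}{a+z} \longrightarrow \cA{z}, \qquad \int \frac{\beta_n^{\bfb}(\dd b)}{b-z} \longrightarrow \cB{z}
\end{equation*}
on each compact $K \subset (-\aml, \bml)$. The support of $\alpha_m^{\bfa}$ sits in $[\min \ca{m}{}, \infty)$ with $\min \ca{m}{} \to \aml$, so for $z \in K$ and large $m$ the integrand $a \mapsto (a+z)^{-1}$ is bounded, continuous, and vanishes at infinity. Vague convergence of the subprobability measures extends from $C_c$ to $C_0$ via a tail-truncation argument using uniform total mass, yielding pointwise convergence in $z$; a uniform-in-$m$ bound on $(a+z)^{-2}$ supplies equicontinuity in $z$, upgrading to uniform convergence on $K$. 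Consequently, the convex function $\psi_k(z) := (m_k+n_k)^{-1} \cEStShp{m_k, n_k}{z}$, whose minimizer on $(-\min \ca{m_k}{}, \min \cb{n_k}{})$ is $\cEMin{m_k, n_k}$, converges uniformly on compacts of $(-\aml, \bml)$ to $H_x$.

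To identify $z^*$ with $z_\infty$, I pass the first-order condition
\begin{equation*}
x_k \int (a+\cEMin{m_k, n_k})^{-2}\, \alpha_{m_k}^{\bfa}(\dd a) = (1-x_k) \int (b-\cEMin{m_k, n_k})^{-2}\, \beta_{n_k}^{\bfb}(\dd b)
\end{equation*}
to the limit. When $z^* \in (-\aml, \bml)$, the same vague-convergence plus equicontinuity argument for $(a+z)^{-2}$ produces the interior FOC for $H_x$ at $z^*$; by strict convexity this forces $z^* = z_\infty$, and simultaneously rules out $z^*$ on the boundary when $z_\infty$ is interior, via the standard convergence of minimizers of uniformly convergent convex functions. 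If $z_\infty \in \{-\aml, \bml\}$, the endpoint $z^* = \bml$ is excluded because $(b-z)^{-2}$ blows up at $\bml$ while $(a+z)^{-2}$ stays bounded, destroying the FOC balance; by symmetry only $z^* = z_\infty$ remains. The same argument applied directly to the limit measures $\alpha, \beta$ via \eqref{EMaMin} gives $w^* = z_\infty$, completing the contradiction.

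The main obstacle is the boundary/extreme-ratio case where $z_\infty \in \{-\aml, \bml\}$ or $x \in \{0, 1\}$: uniform convergence on compact subsets of $(-\aml, \bml)$ does not by itself control minimizers that migrate to the endpoints, so the FOC-asymptotics comparison described above is essential. This in turn hinges on the dichotomy $\int (a-\aml)^{-2}\alpha(\dd a) < \infty$ versus $= \infty$ (and symmetrically for $\beta$): only in the former can $z_\infty = -\aml$ occur with $x > 0$, and the FOC then reduces precisely to the $\le$-condition in \eqref{EMaMin}.
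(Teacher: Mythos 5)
Your overall framework — pass to subsequences by compactness, use the uniform convergence of the empirical Cauchy transforms on compact subsets of $(-\aml,\bml)$ (essentially Lemma \ref{LCatCont}), and then argue via the first-order conditions — is a viable route, and structurally it differs from the paper's. The paper argues directly and quantitatively: for fixed $\epsilon$, it substitutes the comparison point $z = \cMaMin{m,n}+\epsilon/2$ (always inside a fixed compact $[-\aml+\delta,\bml-\delta]$), applies the error bound \eqref{E40}, and uses one mean-value-theorem step to lower-bound $\partial_z\cStShp{m,n}{z}$ strictly away from zero, producing a contradiction with the sign of $\partial_z\cEStShp{m,n}{z}$ at $z<\cEMin{m,n}$. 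That construction sidesteps the entire subsequence/boundary-case taxonomy because the test point is interior by design.

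The gap in your argument is in the step you yourself flag as the ``main obstacle.'' The claim that when $z_\infty=-\aml$ the alternative $z^*=\bml$ is excluded ``because $(b-z)^{-2}$ blows up at $\bml$ \ldots\ destroying the FOC balance'' is not correct in general. The normalized empirical sum $n_k^{-1}\sum_j(\cb{n_k}{j}-\cEMin_k)^{-2}$ need not blow up as $\cEMin_k\to\bml$: it certainly tends to $\infty$ when $\int(b-\bml)^{-2}\beta(\dd b)=\infty$, but the configuration $z_\infty=-\aml$ together with $\int(b-\bml)^{-2}\beta(\dd b)<\infty$ is entirely possible (take, e.g., $\aml=0$, $\alpha=\delta_1$, $\bml=1$, $\beta=\delta_2$, $x$ small), and then the claimed divergence fails and no contradiction is produced from the ``blowup'' alone. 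What does close this case — and treats all boundary and extreme-ratio situations $z_\infty\in\{-\aml,\bml\}$, $x\in\{0,1\}$ uniformly — is comparing derivatives at a \emph{fixed} interior $w$: since $z\mapsto\partial_z(x\cA{z}+(1-x)\cB{z})$ is strictly increasing and $z_\infty=-\aml$ is its minimizer on $[-\aml,\bml]$, one has $\partial_z(x\cA+( 1-x)\cB)(w)>0$; but $\cEMin_k\to\bml>w$ forces $\partial_z(m_k+n_k)^{-1}\cEStShp{m_k,n_k}{w}<0$ eventually, which contradicts the uniform-on-compacts convergence of the derivative to a positive limit. Without such a step (or the Fatou-type $\liminf$ lower bound combined with the case dichotomy \eqref{EMaMin}), the argument as written does not rule out the boundary alternative.
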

\begin{proof}
Note that $\aml, \bml < \infty$ since $\alpha, \beta \neq 0$. Pick $\delta > 0$ such that $2\delta < \aml + \bml$. Then the interval $[-\aml+\delta, \bml-\delta]$ is nonempty.    By \eqref{AsmMinConv}, there exists $L \in \bbZ_{>0}$ such that 
\begin{align*}
|\min \ca{m}{}-\aml| < \delta \quad \text{ and } \quad |\min \cb{n}{}-\bml| < \delta \quad \text{ for } m, n \ge L. 
\end{align*}
In particular, $[-\aml+\delta, \bml-\delta] \subset \cIz{m, n}$ for $m, n \ge L$. Hence, it follows from assumption \eqref{AsmVagueConv} and Lemma \ref{LCatCont} that 
\begin{align*}
\lim_{m \rightarrow \infty}\frac{1}{m}\sum_{i=1}^m \frac{1}{\ca{m}{i}+z} = \int_{\bbR} \frac{\alpha(\dd a)}{a+z} \quad \text{and} \quad  \lim_{n \rightarrow \infty} \frac{1}{n} \sum_{j=1}^n \frac{1}{\cb{n}{j}-z} = \int_{\bbR} \frac{\beta(\dd b)}{b-z}
\end{align*}
uniformly in $z \in [-\aml + \delta, \bml-\delta]$. Since the Cauchy transform produces holomorphic functions, the uniform convergence on compact sets also holds for derivatives. 
Thus, for any $k \in \bbZ_{>0}$, 
\begin{align*}
\lim_{m \rightarrow \infty}\frac{1}{m}\sum_{i=1}^m \frac{1}{(\ca{m}{i}+z)^k} = \int_{\bbR} \frac{\alpha(\dd a)}{(a+z)^k} \text{ and } \lim_{n \rightarrow \infty} \frac{1}{n} \sum_{j=1}^n \frac{1}{(\cb{n}{j}-z)^k} = \int_{\bbR} \frac{\beta(\dd b)}{(b-z)^k}
\end{align*}
uniformly in $z \in [-\aml + \delta, \bml-\delta]$. Increase $L \in \bbZ_{>0}$ if necessary to have 
\be\begin{aligned}
&\bigg|\sum_{i=1}^m \frac{1}{(\ca{m}{i}+z)^2}-m\int_\bbR \frac{\alpha(\dd a)}{(a+z)^2}\bigg| < mc\epsilon \\
 \text{and} \quad 
 &\bigg|\sum_{j=1}^n \frac{1}{(\cb{n}{j}-z)^2}-\int_{\bbR} \frac{\beta(\dd b)}{(b-z)^2}\bigg| < nc\epsilon 
\end{aligned} \label{E40}\ee
whenever $m, n \ge L$ and $z \in [-\aml + \delta, \bml - \delta]$, where $c > 0$ is a constant to be chosen below. 

Work with $m, n \ge L$ below. To prove the claim of the lemma, argue by contradiction. Consider the case $\cEMin{m, n} \ge \cMaMin{m, n}+\epsilon$ first. Put $z = \cMaMin{m, n}+\epsilon/2$. Then $z \ge -\aml + \epsilon/2$ and $z \le \cEMin{m, n}-\epsilon/2 \le \min\cb{n}{}-\epsilon/2 \le \bml+\delta-\epsilon/2$. Therefore, $z \in [-\aml + \delta, \bml-\delta]$ provided that $\delta \le \epsilon/4$. Recalling \eqref{EShpMin},    $z < \cEMin{m, n}$ and \eqref{E40} imply that 
\be\begin{aligned}
0 &\ge -\sum_{i=1}^m \frac{1}{(\ca{m}{i}+z)^2}+\sum_{j=1}^n \frac{1}{(\cb{n}{j}-z)^2} \\&\ge  -m\int_\bbR \frac{\alpha(\dd a)}{(a+z)^2}+n\int_\bbR \frac{\beta(\dd b)}{(b-z)^2} - c\epsilon(m+n). 
\end{aligned}  \label{E41}\ee
The function $f:[\cMaMin{m, n}, z] \rightarrow \bbR$ given by 
\begin{align*}
f(s) = \partial_s \cStShp{m, n}{s} = -m\int_{\bbR} \frac{\alpha(\dd a)}{(a+s)^{2}} + n \int_{\bbR} \frac{\beta(\dd b)}{(b-s)^{2}}
\end{align*}
is differentiable in the interior and continuous up to the boundary of its domain. Continuity at the endpoint $\cMaMin{m, n}$ holds even when $\cMaMin{m, n} = -\aml$ because then $\int_{\bbR} (a-\aml)^{-2} \alpha(\dd a) < \infty$, see \eqref{EMaMin}.   By the mean-value theorem, there exists $s \in (\cMaMin{m, n}, z)$ such that
\begin{align*}
f(z) &= f(\cMaMin{m, n}) + \frac{1}{2}\epsilon f'(s) \; \ge \; m \epsilon \int_{\bbR} \frac{\alpha(\dd a)}{(a+s)^{3}}+n \epsilon \int_{\bbR} \frac{\beta(\dd b)}{(b-s)^{3}} \\
&\ge m \epsilon \int_{\bbR} \frac{\alpha(\dd a)}{(a+\bml)^{3}}+n \epsilon \int_{\bbR} \frac{\beta(\dd b)}{(b+\aml)^{3}} \; \ge\; 2c\epsilon (m+n). 
\end{align*} 
The first inequality above holds because $f(\cMaMin{m, n}) = 0$ if $\cMaMin{m, n} \in (-\aml, \bml)$, and $f(\cMaMin{m, n}) \ge 0$ if $\cMaMin{m, n} = -\aml$. Note that $\cMaMin{m, n} < \bml$ in the present case. 
The second inequality is monotonicity, and the last inequality comes from choosing $c>0$ small enough. 
Combining this with  \eqref{E41} gives  the contradiction 
\begin{align*}
0 &\ge f(z)-c\epsilon(m+n) \ge c\epsilon (m+n). 
\end{align*}

Likewise, the case $\cEMin{m, n} \le \cMaMin{m, n}-\epsilon$ results in a contradiction. The proof of Lemma \ref{LMinConv} is complete.  
\end{proof}

\begin{lem}
\label{LEShpConv}
Assume \eqref{AsmLB}, \eqref{AsmVagueConv} and \eqref{AsmMinConv}. Let $\epsilon > 0$. Then there exists $L \in \bbZ_{>0}$ such that 
\begin{align*}
|\cEShp{m, n}-\cShp{m, n}| \le \epsilon(m+n) \quad \text{ whenever } m, n \in \bbZ_{\ge L}. 
\end{align*}
\end{lem}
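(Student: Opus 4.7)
The plan is to approximate both infima $\cEShp{m,n}=\inf_{z\in\cIz{m,n}}\cEStShp{m,n}{z}$ and $\cShp{m,n}=\inf_{z\in(-\aml,\bml)}\cStShp{m,n}{z}$ using three ingredients: (i) the vague convergence \eqref{AsmVagueConv} together with Lemma \ref{LCatCont}, which delivers uniform convergence $m^{-1}\cEA{m}{z}\to\cA{z}$ and $n^{-1}\cEB{n}{z}\to\cB{z}$ on each compact subinterval $[-\aml+\delta,\bml-\delta]$; (ii) Lemma \ref{LMinConv}, which places $\cEMin{m,n}$ within $\delta$ of $\cMaMin{m,n}$ for large $m,n$ in the nondegenerate case $\alpha,\beta\neq 0$; and (iii) the strict convexity of $g\coloneqq\cEStShp{m,n}{\cdot}$, which supplies a tangent-line lower bound away from the minimizer.

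First I would dispose of the degenerate situations. When $\aml=\infty$, $\bml=\infty$, or $\alpha=\beta=0$, one has $\cShp{}\equiv 0$ by \eqref{EShpFunZero}; the lower bound is trivial and the upper bound follows by inserting any fixed $z_0$ eventually in $\cIz{m,n}$ (which exists by \eqref{AsmLB} and \eqref{AsmMinConv}) into $\cEStShp{m,n}{\cdot}$ and invoking (i) to get $o(m+n)$. For the one-sided degenerate case (say $\alpha=0$, $\beta\neq 0$), the shape function is $n\cB{-\aml}$ by \eqref{E46}; the upper bound uses $z_0=-\aml+\delta$ together with continuity of $\cB{}$ at $-\aml$ and (i), while the lower bound uses the termwise inequality $(\cb{n}{j}-\cEMin{m,n})^{-1}\ge(\cb{n}{j}+\min\ca{m}{})^{-1}$ (which holds since $\cEMin{m,n}>-\min\ca{m}{}$) combined with $\min\ca{m}{}\to\aml$ and uniform convergence.

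In the main case $\alpha,\beta\neq 0$ (so $\aml,\bml\in\bbR$), fix a small $\delta>0$ to be chosen in terms of $\epsilon$. For the upper bound, pick $z_0\in[-\aml+\delta,\bml-\delta]$ within $\delta$ of $\cMaMin{m,n}$; by \eqref{AsmMinConv} this $z_0\in\cIz{m,n}$ eventually, yielding $\cEShp{m,n}\le\cEStShp{m,n}{z_0}\le\cStShp{m,n}{z_0}+\tfrac\epsilon3(m+n)$ by (i). A mean-value computation bounds $\cStShp{m,n}{z_0}-\cShp{m,n}\le C\delta(m+n)$, with $C$ controlled by bounded derivatives of $\cA{},\cB{}$ near the relevant endpoint. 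For the lower bound, invoke (ii): if $\cEMin{m,n}\in[-\aml+\tfrac\delta2,\bml-\tfrac\delta2]$, applying (i) at $\cEMin{m,n}$ gives $\cEShp{m,n}\ge\cStShp{m,n}{\cEMin{m,n}}-\tfrac\epsilon2(m+n)\ge\cShp{m,n}-\tfrac\epsilon2(m+n)$; otherwise $\cMaMin{m,n}$ sits near an endpoint, say $-\aml$, and the convexity tangent-line inequality $\cEShp{m,n}=g(\cEMin{m,n})\ge g(z_0)+g'(z_0)(\cEMin{m,n}-z_0)$ at $z_0=-\aml+\delta$ closes the gap, with $g(z_0)\ge\cShp{m,n}-C\delta(m+n)-\tfrac\epsilon3(m+n)$ from the upper-bound step and a correction of order $O(\delta(m+n))$ from $|\cEMin{m,n}-z_0|\le 2\delta$.

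The principal obstacle is controlling $|g'(z_0)|$ in that last sub-case: the naive bound $|g'(z_0)|\le C(m+n)/\delta^2$ deteriorates as $\delta\to 0$, which would ruin the correction. The dichotomy in \eqref{EMaMin} rescues the argument: either $\cMaMin{m,n}=-\aml$ strictly, in which case \eqref{EMaMin} automatically forces the finite second moment $\int(a-\aml)^{-2}\alpha(\dd a)<\infty$ and therefore $\sum_i(\ca{m}{i}+z_0)^{-2}=O(m)$ uniformly in small $\delta$; or $\cMaMin{m,n}\in(-\aml,-\aml+\delta)$ is interior, in which case one uses the Euler-Lagrange equation in the last line of \eqref{EMaMin} at $z^*=\cMaMin{m,n}$ (adjusting $z_0$ to $\max\{\cMaMin{m,n},-\min\ca{m}{}+\eta\}$ to guarantee membership in $\cIz{m,n}$) to bound the empirical sum directly. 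Either route yields the needed $O(\delta(m+n))$ control, and shrinking $\delta$ delivers the claimed $\epsilon(m+n)$ approximation.
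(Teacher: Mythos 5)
Your overall architecture matches the paper's: split off the degenerate cases, and in the main case $\alpha,\beta\neq 0$ rely on Lemma~\ref{LCatCont} for uniform convergence on compacta, Lemma~\ref{LMinConv} for proximity of the minimizers, and convexity. Your upper-bound step is sound and close in spirit to the paper's. The real divergence is the lower bound, where you split on the location of $\cEMin{m,n}$ and invoke a tangent-line inequality near the endpoint; the paper instead exploits the exact quadratic identity \eqref{E35}, which writes $\cEStShp{m,n}{z}-\cEShp{m,n}$ as $(z-\EMin)^2$ times a sum dominated by $\delta^{-1}\cMinVar{m,n}$, and then $\cMinVar{m,n}\le 4(m+n)/(\inf\a+\inf\b)^2$ by \eqref{EVarUB} — a bound uniform in $\delta$ precisely because it is evaluated at the optimizer $\EMin$, where the two variance sums in \eqref{EMinVar} balance. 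That avoids any case split and any pointwise derivative estimate.

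The ``principal obstacle'' you flag concerning $|g'(z_0)|$ (with $g(z)=\cEStShp{m,n}{z}$) is in fact illusory, but the dichotomy you propose to overcome it is unsound, so as written the lower bound in Sub-case~2 has a gap. In the sub-case $\cEMin{m,n}<-\aml+\tfrac{\delta}{2}$ with $z_0=-\aml+\delta$, the optimizer sits strictly to the left of $z_0$, so $g'(z_0)>0$ and the correction $g'(z_0)(\cEMin{m,n}-z_0)$ is negative; one only needs an \emph{upper} bound on $g'(z_0)$. Dropping the negative term $-\sum_i(\ca{m}{i}+z_0)^{-2}$ gives $g'(z_0)\le\sum_j(\cb{n}{j}-z_0)^{-2}$, and for large $n$ and small $\delta$ one has $\cb{n}{j}-z_0\ge\min\cb{n}{}+\aml-\delta\ge\tfrac12(\aml+\bml)>0$ by \eqref{AsmMinConv} and \eqref{AsmLB}, hence $g'(z_0)=O(n)$ uniformly in $\delta$. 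Symmetrically, near $\bml$ one only needs the $\bfa$-sum. The sum that can blow up near the endpoint is exactly the one entering $g'(z_0)$ with the favourable sign, so the tangent-line route closes once this one-sidedness is noted.

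Your proposed fix, by contrast, does not deliver the claimed bound. Although $\cMaMin{m,n}=-\aml$ does force $\int(a-\aml)^{-2}\alpha(\dd a)<\infty$ via \eqref{EMaMin}, a finite limiting second moment does not control the empirical integral $m^{-1}\sum_i(\ca{m}{i}+z_0)^{-2}=\int(a-\aml+\delta)^{-2}\alpha_m^{\bfa}(\dd a)$ uniformly in small $\delta$: the integrand diverges at $\aml-\delta$, which lies arbitrarily close to $\supp\alpha_m^{\bfa}$, and Lemma~\ref{LCatCont} is only valid on compact $K$ bounded away from the closure of $\bigcup_m\supp\alpha_m^{\bfa}$. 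Concretely, take $\alpha=\delta_1$, $\aml=0$, $\ca{m}{1}=1/m$ and $\ca{m}{i}=1$ for $i\ge 2$; with $\delta=1/m$ one gets $\sum_i(\ca{m}{i}+\delta)^{-2}\approx m^2/4$, not $O(m)$. The second branch of your dichotomy (using the Euler--Lagrange relation for the limiting measures to bound an empirical sum) runs into the same difficulty. Replace that dichotomy by the one-sided derivative bound above, or follow the paper's identity \eqref{E35} with \eqref{EVarUB}.
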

\begin{proof}
Abbreviate $\EMin = \cEMin{m, n}$ and $\MaMin = \cMaMin{m, n}$. Assume further that $\aml, \bml < \infty$. As in the preceding proof, pick $\delta > 0$ and $L \in \bbZ_{>0}$ such that $2\delta < \aml + \bml$ and $[-\aml+\delta, \bml-\delta] \subset \cIz{m, n}$ for $m, n \ge L$. Let $z \in (-\aml + \delta, \bml-\delta)$. By \eqref{AsmMinConv}, after increasing $L$ if necessary, 
\be\label{E365} 
\ca{m}{i}+z \ge \delta\quad\text{and}\quad \cb{n}{j}-z \ge \delta \quad\text{for }m, n \ge L \text{ and }i \in [m], j \in [n].
\ee 
Hence, using definition \eqref{EMinVar}, identity \eqref{E35} and bound \eqref{EVarUB}, one obtains that 
\begin{align}
0 &\le \cEStShp{m, n}{z}-\cEShp{m, n} \label{E366}\\
&= (z-\EMin)^2 \bigg\{\sum_{i=1}^m \frac{1}{(\ca{m}{i}+z)(\ca{m}{i}+\EMin)^2} + \sum_{j=1}^n \frac{1}{(\cb{n}{j}-z)(\cb{n}{j}-\EMin)^2}\bigg\}\nonumber\\
&\le 2\delta^{-1}(z-\EMin)^2\cMinVar{m, n} \nonumber\\
&\le \frac{8(z-\EMin)^2(m+n)}{\delta(\inf \bfa + \inf \bfb)^2}. \label{E36}
\end{align}
The denominator is nonzero by \eqref{AsmLB} and \eqref{AsmMinConv}. Analogously  one  derives 
\begin{align}
0 &\le \cStShp{m, n}{z}-\cShp{m, n} \label{E377} \\
&= (z-\MaMin)^2 \bigg\{m\int_{\bbR} \frac{\alpha(\dd a)}{(a+z)(a+\MaMin)^2} + n \int_{\bbR} \frac{\beta(\dd b)}{(b-z)(b-\MaMin)^2}\bigg\} \nonumber\\
&\le 2\delta^{-1}(z-\MaMin)^2 \bigg\{m\int_{\bbR} \frac{\alpha(\dd a)}{(a+\MaMin)^2} + n \int_{\bbR} \frac{\beta(\dd b)}{(b-\MaMin)^2}\bigg\} \nonumber\\
&\le \frac{8(z-\MaMin)^2(m+n)}{\delta(\inf \bfa + \inf \bfb)^2}. \label{E37}
\end{align}
The second inequality above uses $(\inf\supp\alpha)+z\ge\delta$ and $(\inf\supp\beta)-z\ge\delta$,  which comes from combining \eqref{E365} with \eqref{Eab},   \eqref{AsmVagueConv} and \eqref{AsmMinConv}.
For the last inequality, observe from \eqref{EMaMin} that 
\begin{align*}
\int_{\bbR} \frac{\alpha(\dd a)}{(a+\MaMin)^2} + \int_{\bbR} \frac{\beta(\dd b)}{(b-\MaMin)^2} 
&\le 2\bigg(\one{\{\MaMin \le \tfrac{\bml-\aml}{2}\}}
\int_{\bbR} \frac{\beta(\dd b)}{(b-\MaMin)^2} + \one{\{\MaMin > \tfrac{\bml-\aml}{2}\}}\int_{\bbR} \frac{\alpha(\dd a)}{(a+\MaMin)^2}\bigg) \le \frac{8}{(\aml+\bml)^2}. 
\end{align*}

To verify the claim of the lemma, first consider the case that $\alpha$ and $\beta$ are not zero measures. Choose $\delta$ sufficiently small such that $40\delta(\inf \bfa + \inf \bfb)^{-2} < \epsilon/2$. Then pick $L \in \bbZ_{>0}$ sufficiently large such that 
\begin{align}
|\EMin-\MaMin| \le \delta \quad \text{ and } |\cEStShp{m, n}{z}-\cStShp{m, n}{z}| \le \frac{1}{2}\epsilon(m+n) \label{E38}
\end{align}
whenever $m, n \ge L$ and $z \in [-\aml +\delta, \bml - \delta]$. Such $L$ exists by virtue of Lemmas \ref{LMinConv} and \ref{LCatCont}. 

Set $z = \min\{\max\{\MaMin, -\aml + \delta\}, \bml-\delta\}$. Then $z \in [-\aml + \delta, \bml-\delta]$ and $|z-\MaMin| \le \delta$. Furthermore, 
$|z-\EMin| \le |z-\MaMin|+ |\MaMin-\EMin| \le 2\delta$ if $m, n \ge L$. Now putting together \eqref{E366}--\eqref{E38} via the triangle inequality leads to 
\begin{align*}
|\cEShp{m, n}-\cShp{m, n}| &\le |\cEStShp{m, n}{z}-\cEShp{m, n}| + |\cStShp{m, n}{z}-\cShp{m, n}| \\
&+ |\cEStShp{m, n}{z}-\cStShp{m, n}{z}| \\
&\le \frac{40\delta(m+n)}{(\inf \bfa + \inf \bfb)^2}+\frac{\epsilon}{2}(m+n) \le \epsilon (m+n)
\end{align*}
whenever $m, n \ge L$.

Now assume that $\beta = 0$. Then, as noted in \eqref{E46},  
\begin{align*}
\cShp{x,  y} = x \int_{\bbR} \frac{\alpha(\dd a)}{a+\bml} \quad \text{ for } x, y > 0.
\end{align*}
By another appeal to Lemma \ref{LCatCont}, increase $L$ if necessary to have 
\begin{align*}
\bigg|\sum_{i=1}^m \frac{1}{\ca{m}{i}+\bml+\delta}-m\int_{\bbR} \frac{\alpha(\dd a)}{a+\bml+\delta}\bigg| \le \frac{\epsilon m}{2} \quad \text{ for } m \ge L. 
\end{align*}
  Then 
\begin{align*}
\cEShp{m, n} &\ge \sum_{i=1}^m \frac{1}{\ca{m}{i}+\min \cb{n}{}} \ge \sum_{i=1}^m \frac{1}{\ca{m}{i}+\bml + \delta} \ge m \int_{\bbR}\frac{\alpha(\dd a)}{a+\bml+\delta} -\frac{\epsilon m}{2} \\
&= m \int_{\bbR} \frac{\alpha(\dd a)}{a+\bml} -m \delta \int_{\bbR} \frac{\alpha(\dd a)}{(a+\bml)(a+\bml+\delta)}-\frac{\epsilon m}{2}\\
&\ge m \int_{\bbR} \frac{\alpha(\dd a)}{a+\bml} -\frac{m \delta}{(\inf \a + \inf \b)^2}-\frac{\epsilon m}{2}\\
&\ge m \int_{\bbR} \frac{\alpha(\dd a)}{a+\bml} - \epsilon m = \cShp{m, n}-\epsilon m
\end{align*}
for $m, n \ge L$. The second-last inequality comes from $\inf \a\le \inf\supp\alpha$ and  $\inf \b\le \min \cb{n}{}\to\bml$.

 For the complementary bound, choose $L$ larger if necessary such that 
\begin{align}
\min\cb{n}{} \in  [\bml-\delta/2, \bml+\delta/2] \quad \text{ and } \quad \sum_{j=1}^n \frac{1}{\cb{n}{j}-\bml+\delta/2} \le \frac{\epsilon n}{2} \quad \text{ for } n \ge L \label{E500} \\
\bigg|\sum_{i=1}^m \frac{1}{\ca{m}{i}+\bml-\delta/2}-m \int_{\bbR} \frac{\alpha(\dd a)}{a+\bml-\delta/2}\bigg| \le \frac{\epsilon m}{2} \quad \text{ for } m \ge L \label{E501}, 
\end{align}
where we invoke Lemma \ref{LCatCont} again. Then, for $m, n \ge L$, one obtains that 
\begin{align*}
\cEShp{m, n} &\le \sum_{i=1}^m \frac{1}{\ca{m}{i}+\min \cb{n}{}-\delta} + \sum_{j=1}^n \frac{1}{\cb{n}{j}-\min \cb{n}{}+\delta}  \\
&\le \sum_{i=1}^m \frac{1}{\ca{m}{i}+\bml-3\delta/2}+ \sum_{j=1}^n \frac{1}{\cb{n}{j}-\bml+\delta/2} \\
&\le m \int_{\bbR} \frac{\alpha(\dd a)}{a+\bml-3\delta/2} + \frac{\epsilon}{2}(m+n) \\
&= \cShp{m, n} + \frac{3\delta m}{2} \int_{\bbR} \frac{\alpha (\dd a)}{(a+\bml)(a+\bml-3\delta/2)} + \frac{\epsilon}{2}(m+n) \\
&\le \cShp{m, n} + \frac{6\delta m}{(\inf \bfa + \inf \bfb)^2} + \frac{\epsilon}{2}(m+n) \le \cShp{m, n} + \epsilon (m+n).
\end{align*}
The first inequality above is by definition \eqref{EECenter}. The next two inequalities are due to \eqref{E500}-\eqref{E501}. For the second-last inequality, first use $a \ge \inf \supp \alpha \ge \aml$ and $\delta \le \aml + \bml$, and then recall $\aml + \bml \ge \inf \a + \inf \b$. 
This completes the case $\beta = 0$. The case $\alpha = 0$ is handled similarly. 

The preceding paragraph also includes the cases $\aml = \infty, \bml < \infty$ and $\aml < \infty, \bml = \infty$. Therefore, the only remaining case is $\aml = \bml = \infty$ which implies that $\cShp{}=0$. Pick any $z_0 \in (-\inf \a, \inf \b)$. Since $\alpha, \beta = 0$ now, by Lemma \ref{LCatCont}, 
\begin{align*}
\cEShp{m, n} \le \cEStShp{m, n}{z_0} \le \epsilon (m+n) \quad \text{ for } m,n \ge L 
\end{align*}
after increasing $L$ if necessary. This completes the proof. 
\end{proof}

\begin{proof}[Proof of Theorem \ref{TShpFun}]
This is immediate from Theorem \ref{TEmpShp} and Lemma \ref{LEShpConv}. 
\end{proof}

\section{Proofs of Theorem \ref{TNarShp}}
\label{SThinRec}

\begin{proof}[Proof of Theorem \ref{TNarShp}]
Due to the symmetry, only (a) is proved below. Write $p = \inf_{n \in S} \min \cb{n}{}$. Introduce a small parameter $\eta > 0$ such that $2\eta < \inf \a + p$. Since $|\cIz{m, n}| = \min \ca{m}{} + \min \cb{n}{} \ge \inf \a + p > \eta$ for $m \in \bbZ_{>0}$ and $n \in S$, it follows from Theorem \ref{TEmpShp} that, $\cP$-a.s., there exists a random $L \in \bbZ_{>0}$ such that 
\begin{align}
|\cGp{m, n}-\cEShp{m, n}| \le \epsilon (m+n) \quad \text{ for } m \in \bbZ_{>0} \text{ and } n \in S \text{ with } m+n \ge L. \label{E202}
\end{align}

By the first limit in \eqref{AsmVagueConv} and Lemma \ref{LCatCont}, there exists $K \in \bbZ_{>0}$ such that 
\begin{align}
\bigg|\sum_{i=1}^m \frac{1}{\ca{m}{i}+z}-m\int_{\bbR} \frac{\alpha(\dd a)}{a+z}\bigg| \le \epsilon m\quad \text{ whenever } m \ge K \text{ and } z \in [p-\eta, p+4\epsilon^{-1}]. \label{E43}
\end{align}
The range of $z$ in \eqref{E43} can be extended to $[p-\eta, \infty)$ since the terms $(\ca{m}{i}+z)^{-1}$ for $i \in [m]$ and the integrand $(a+z)^{-1}$ are bounded from above by $(\inf \a+z)^{-1}$ which is less than $\epsilon/4$ for $z \ge p+4/\epsilon$. 

Choose $K \ge L$, take $m \ge K$ and $n \in S$ below. Write $\EMin = \cEMin{m, n}$. By definition \eqref{EECenter}, \eqref{E43} and since $\max \{\EMin, p\} \le \min \cb{n}{}$, 
one has the lower bound
\begin{align}
\cEShp{m, n} \ge \sum_{i=1}^m \frac{1}{\ca{m}{i}+\EMin} \ge \sum_{i=1}^m \frac{1}{\ca{m}{i}+\min \cb{n}{}} \ge m \int_{\bbR} \frac{\alpha(\dd a)}{a+\min \cb{n}{}}-\epsilon m. \label{E200}
\end{align}
For a complementary upper bound, noting that $\min \cb{n}{}-\eta \in \cIz{m, n}$, develop 
\begin{align*}
\cEShp{m, n} &\le \cEStShp{m, n}{\min\cb{n}{}-\eta} = \sum_{i=1}^m \frac{1}{\ca{m}{i}+\min \cb{n}{}-\eta} + \sum_{j=1}^n \frac{1}{\cb{n}{j}-\min \cb{n}{}+\eta} \\
&\le m\int_{\bbR} \frac{\alpha(\dd a)}{a+\min\cb{n}{}-\eta} + \epsilon m + \frac{n}{\eta} \\
&= m\int_{\bbR} \frac{\alpha(\dd a)}{a+\min\cb{n}{}} + m\eta \int_{\bbR} \frac{\alpha(\dd a)}{(a+\min\cb{n}{})(a+\min\cb{n}{}-\eta)} + \epsilon m + \frac{n}{\eta} \\
&\le m\int_{\bbR} \frac{\alpha(\dd a)}{a+\min\cb{n}{}} +2m\eta \int_{\bbR} \frac{\alpha(\dd a)}{(a+\min\cb{n}{})^2}+\epsilon m+\frac{n}{\eta}.  
\end{align*}
The first two inequalities above come again from \eqref{EECenter} and \eqref{E43}. The last inequality comes from  $2 \eta < \inf \a + \min \cb{n}{} \le \inf \supp \alpha + \min \cb{n}{}$ where we appealed to  the assumption that the first limit in \eqref{AsmVagueConv} holds. With $\eta \le \epsilon$ and $n \le \epsilon \eta m$, one then has 
\begin{align}\cEShp{m, n} \le m \int_{\bbR} \frac{\alpha(\dd a)}{a+\min\cb{n}{}} + C\epsilon m, \label{E201}\end{align} 
where $C = 2 + 2 \int_{\bbR} (a+\min\cb{n}{})^{-2}\alpha(\dd a)$. 

Choose $\delta = \min \{1, \epsilon \eta\}$. Combining the bounds from \eqref{E202}, \eqref{E200} and \eqref{E201} with the triangle inequality results in 
\begin{align*}
\bigg|\cGp{m, n}-m \int_{\bbR} \frac{\alpha(\dd a)}{a+\min\cb{n}{}}\bigg|  \le \epsilon (m+n) + C\epsilon m \le \epsilon m (1+\delta + C) \le \epsilon (2+C) m
\end{align*}
for $m \in \bbZ_{\ge K}$ and $n \in S$ with $n \le \delta m$. The result follows upon replacing $\epsilon$ with $\epsilon (2+C)^{-1}$ throughout. 
\end{proof}

\section{Proof of Theorem \ref{TLimShp}}
\label{SPrLimShp}

\begin{lem}
\label{LLimShpBd}
Assume \eqref{EInfSuppCond}-\eqref{Eab}. Then 
\begin{enumerate}[\normalfont (a)]
\item $\cLimShp \subset \{(x, y) \in \bbR_{>0}^2: x\cA{\bml} \le 1 \text{ and } y \cB{-\aml} \le 1\}$ 
\item $\cLimShp \supset \{(x, y) \in \bbR_{>0}^2: 2(x + y) \le \aml + \bml\}$. 
\end{enumerate}
\end{lem}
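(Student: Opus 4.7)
The plan is to bound the shape function $\cShp{x,y} = \inf_{z \in (-\aml, \bml)} \{x\cA{z} + y\cB{z}\}$ from below for the subset claim (a) and from above for the superset claim (b), using only elementary monotonicity of $\cA$ and $\cB$ together with the (implicit) fact that $\alpha$ and $\beta$ are subprobability measures in the setting where this lemma is applied. Recall from \eqref{EAdef}--\eqref{EBdef} that $z \mapsto \cA{z}$ is non-increasing on $(-\inf\supp\alpha, \infty)$ (its $z$-derivative equals $-\int (a+z)^{-2}\alpha(\dd a) \le 0$) and that $z \mapsto \cB{z}$ is non-decreasing on $(-\infty, \inf\supp\beta)$; both functions are non-negative.

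For (a), monotonicity gives $\cA{z} \ge \cA{\bml}$ and $\cB{z} \ge \cB{-\aml}$ for every $z \in (-\aml, \bml)$, where the limiting values are finite: for instance $a + \bml \ge \aml + \bml > 0$ on $\supp\alpha$ by \eqref{AsmLB}--\eqref{Eab}, so $\cA{\bml} \le \alpha(\bbR)/(\aml+\bml) < \infty$. The limits $\cA{z} \to \cA{\bml}$ as $z \uparrow \bml$ and $\cB{z} \to \cB{-\aml}$ as $z \downarrow -\aml$ are then automatic by dominated convergence (and are interpreted as $0$ when $\bml = \infty$ or $\aml = \infty$, in which cases the bounds in (a) become trivial). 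Since $x\cA{z}$ and $y\cB{z}$ are individually non-negative, I obtain the two separate inequalities $\cStShp{x,y}{z} \ge x\cA{\bml}$ and $\cStShp{x,y}{z} \ge y\cB{-\aml}$, each valid for all $z \in (-\aml, \bml)$. Taking infima transfers both bounds to $\cShp{x,y}$, and so $(x,y) \in \cLimShp$ forces $x\cA{\bml} \le 1$ and $y\cB{-\aml} \le 1$.

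For (b), dispose first of the trivial case $\aml + \bml = \infty$: by \eqref{EShpFunZero} one then has $\cShp \equiv 0$ and $\cLimShp = \bbR_{>0}^2$, so the inclusion is automatic. Otherwise $\aml, \bml \in \bbR$ with $\aml + \bml > 0$, and I would simply evaluate $\cStShp{x,y}{\cdot}$ at the symmetric midpoint $z_0 = (\bml - \aml)/2 \in (-\aml, \bml)$. For any $a \in \supp\alpha$, \eqref{Eab} gives $a + z_0 \ge \aml + z_0 = (\aml+\bml)/2$, whence $\cA{z_0} \le 2\alpha(\bbR)/(\aml+\bml) \le 2/(\aml+\bml)$; by symmetry $\cB{z_0} \le 2/(\aml+\bml)$. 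Combining,
\[
\cShp{x,y} \le \cStShp{x,y}{z_0} \le \frac{2(x+y)}{\aml+\bml},
\]
which is $\le 1$ precisely under the hypothesis of (b). The one delicate point worth flagging is the use of $\alpha(\bbR), \beta(\bbR) \le 1$: the inclusion fails without it, as one sees by rescaling $\alpha$ by a large constant in Rost's homogeneous case. Beyond this, no serious obstacle is anticipated; the entire proof is essentially an exercise in monotonicity and choice of test point.
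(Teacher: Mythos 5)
Your proof is correct and follows essentially the same path as the paper: part (a) is exactly the bound $\cShp{x,y} \ge \max\{x\cA{\bml}, y\cB{-\aml}\}$ already recorded in \eqref{EShpBdLB}, and part (b) uses precisely the paper's choice of test point $z_0=(\bml-\aml)/2$ together with \eqref{EShpFunZero} for the infinite case. Your flagging of the implicit use of $\alpha(\bbR),\beta(\bbR)\le 1$ is apt — the paper's hypothesis "\eqref{EInfSuppCond}–\eqref{Eab}" does not state this explicitly, but the subprobability assumption is in force throughout the section where the lemma is applied (via \eqref{AsmVagueConv}).
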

\begin{proof}
\eqref{EShpBdLB} implies (a). If $\aml, \bml < \infty$ then taking $z = \dfrac{1}{2}(\bml-\aml)$ in \eqref{EShpFun} gives 
\begin{align*}
\cShp{x, y} \le \frac{2(x+y)}{\aml + \bml} \quad \text{ for } x, y \in \bbR_{>0}. 
\end{align*}
This bound also trivially holds if $\aml = \infty$ or $\bml = \infty$ by \eqref{EShpFunZero}. Hence, (b). 
\end{proof}

\begin{lem}
\label{LClusBd}
Assume \eqref{AsmLB}, \eqref{AsmVagueConv} and \eqref{AsmMinConv}. Let $S$ denote the set from \eqref{ETrU}, and $\ams$ and $\bms$ be given by \eqref{EAB}. Then there exists a constant $C_0 > 0$ such that the following hold .
\begin{enumerate}[\normalfont (a)]
\item $S \cap (\cLimShp \cup \cLimShpBd{\ams, \bms}) \subset [0, C_0]^2$. 
\item $\cP$-a.s., $S \cap t^{-1}\cClus{t} \subset (0, \max\{C_0, Kt^{-1}\}]^2$ for $t > 0$ and some random $K \in \bbZ_{>0}$.  
\end{enumerate}
\end{lem}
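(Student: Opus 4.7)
The plan is a case analysis on whether $\alpha, \beta$ are zero measures and whether $\aml, \bml$ are infinite. By Lemma \ref{LLimShpBd}(a), $\cLimShp$ is contained in $\{(x, y) \in \bbR_{>0}^2 : x\cA{\bml} \le 1,\ y\cB{-\aml} \le 1\}$. If $\alpha \neq 0$ and $\bml < \infty$ then $\cA{\bml} > 0$ bounds $x$ by $\cA{\bml}^{-1}$; otherwise $\cA{\bml} = 0$ and the definition of $S$ truncates $x \le C$. The $y$-direction is symmetric. The axis segments in $\cLimShpBd{\ams, \bms}$ are bounded under the same dichotomy, using $\bms = \infty \iff \bml = \infty$ (a consequence of $\min \cb{n}{} \to \bml$). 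Taking $C_0$ as the maximum of $C$ and the finite values among $\cA{\bml}^{-1}, \cB{-\aml}^{-1}, \cA{\bms}^{-1}, \cB{-\ams}^{-1}$ gives part (a).

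\textbf{Part (b).} My plan is to control the cluster via Theorem \ref{TShpFun} when both $m, n$ are large, via Theorem \ref{TNarShp} when one coordinate is small and the other is large, and via the slack $K/t$ when both are small. Fix $\epsilon > 0$ smaller than half of each positive coefficient arising below. By Theorem \ref{TShpFun}, $\cP$-a.s.\ there is a random $L_1$ such that $|\cGp{m, n} - \cShp{m, n}| \le \epsilon(m+n)$ for $m, n \ge L_1$. By Theorem \ref{TNarShp} applied with singleton sets $S' = \{m\}$ and $S' = \{n\}$ for each relevant small index, $\cP$-a.s.\ there is a random $L_2$ and a deterministic $\delta > 0$ providing the thin-direction approximations wherever the admissibility conditions hold. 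Set $K = \max(L_1, L_2)$.

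Given $(x, y) \in S \cap t^{-1}\cClus{t}$ with $m = \lc tx \rc$ and $n = \lc ty \rc$, I will split into four sub-cases. If $m, n \ge K$: Theorem \ref{TShpFun} gives $\cShp{m, n} \le t + \epsilon(m+n)$; combining this with $\cShp{m, n} \ge m\cA{\bml}$ (valid when $\alpha \neq 0, \bml < \infty$), the symmetric lower bound in $n$ (when $\beta \neq 0, \aml < \infty$), the degenerate shape formula \eqref{E46}, and the $S$-truncations in the other cases, yields $m, n \le C_0 t$. If $m \le K < n$: automatically $x \le K/t$, and $y$ is bounded either by $S$-truncation (when $\beta = 0$ or $\aml = \infty$) or by Theorem \ref{TNarShp}(b), which gives $n\cB{-\min \ca{m}{}} \le t + \epsilon n$. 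The case $n \le K < m$ is symmetric, and if $m, n \le K$ then trivially $x, y \le K/t$. Enlarging $C_0$ if needed to absorb the finite collection of thin-direction reciprocals $\cA{\min \cb{n}{}}^{-1}$ and $\cB{-\min \ca{m}{}}^{-1}$ for $m, n \in [K]$ closes the argument.

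The main technical obstacle I foresee is the edge case in which Theorem \ref{TNarShp}'s admissibility condition $\inf \b + \min \ca{m}{} > 0$ degenerates to equality for some small $m$: by \eqref{AsmParam} we only have $\min \ca{m}{} + \inf \b \ge 0$. In such cases I plan to use the direct lower bound $\cGp{m, n} \ge \sum_{j=1}^n \cwp{1, j}{m, n}$ along the first row and a law-of-large-numbers estimate for this independent sum, whose mean grows linearly in $n$ provided $\ca{m}{1} + \bml > 0$. If even this fails (i.e.,\ $\ca{m}{1} + \bml = 0$), then $\alpha$ or $\beta$ must be trivial, so $S$ itself supplies the truncation. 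Handling these finitely many problematic indices completes the proof.
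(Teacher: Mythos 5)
Your overall plan matches the paper's proof of Lemma~\ref{LClusBd}. Part~(a) is handled by the same case analysis on $\alpha,\beta$ and $\aml,\bml$ using Lemma~\ref{LLimShpBd}(a) and the definition \eqref{ETrU}; part~(b) by Theorem~\ref{TShpFun} when both lattice coordinates exceed the random threshold, Theorem~\ref{TNarShp} when one is large and the other lies in a fixed finite range, and the slack $K/t$ when both are small. (The paper economizes by reducing to bounding one coordinate assuming $x\ge y$, but that is bookkeeping.)

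You are right to flag the admissibility condition of Theorem~\ref{TNarShp}. Assumption \eqref{AsmParam} only yields $\inf\a + \min\cb{n}{}\ge 0$ (and its mirror), and the strict inequality required by the theorem for $S'=\{n\}$ can fail for finitely many small $n$. For instance with $\ca{m}{i}=1/m$, $\cb{1}{1}=0$, and $\cb{n}{j}=1$ for $n\ge 2$, all of \eqref{AsmLB}, \eqref{AsmVagueConv}, \eqref{AsmMinConv} hold with $\alpha=\delta_0$, $\beta=\delta_1$, $\aml=0$, $\bml=1$, yet $\inf\a+\min\cb{1}{}=0$ and $\cA{\min\cb{1}{}}=\cA{0}$ is not even defined since $0\in-\supp\alpha$. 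The paper's proof invokes Theorem~\ref{TNarShp} to obtain \eqref{E401} without checking this, so your concern is a genuine subtlety that a careful write-up should handle. (The lemma remains true: in such a regime $\cGp{m,n}$ grows superlinearly along the problematic row/column, so the cluster is even smaller there.)

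Your proposed patch, however, contains a false claim. It is not true that $\ca{m}{1}+\bml=0$ forces $\alpha$ or $\beta$ to be the zero measure: take $\ca{m_0}{1}=-1$ for a single index $m_0$ with $\ca{m}{i}=0$ otherwise and $\cb{n}{j}=1+1/n$; then $\aml=0$, $\bml=1$, $\aml+\bml>0$, $\alpha=\delta_0\ne 0$, $\beta=\delta_1\ne 0$, and $S=\bbR_{\ge 0}^2$ provides no truncation. Your ``$S$ supplies the truncation'' escape does not apply there. What actually saves the argument is that the direct sum bound works in all cases: along the column $i_0$ with $\ca{m}{i_0}=\min\ca{m}{}$ (reached after the initial bottom-row segment) the weights are $\Exp(\min\ca{m}{}+\cb{n}{j})$, whose summed means grow at least like $n\cB{-\min\ca{m}{}}\ge n\cB{-\ams}>0$, and in fact superlinearly when the Cauchy transform is infinite or undefined. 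A concentration bound as in Lemma~\ref{LExpCon} then gives the needed lower bound without ever invoking the false dichotomy, and the resulting constant $C_0$ is deterministic because the reciprocals are uniformly bounded by $\cB{-\ams}^{-1}$ (respectively $\cA{\bms}^{-1}$ on the other axis). Also note that $\sum_{j=1}^n \cwp{1,j}{m,n}$ is a column sum, not a ``first row'' sum as you call it; the naming slip matters because it is the column index that must be held at the minimizing value.
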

\begin{proof}
Let $t > 0$ and $(x, y) \in S \cap (\cLimShp \cup \cLimShpBd{\ams, \bms} \cup t^{-1}\cClus{t})$. By symmetry, it suffices to bound $x$ from above assuming $x \ge y$. If $\alpha = 0$ or $\bml = \infty$ then $x \le C$ by definition \eqref{ETrU}. Assume now that $\alpha \neq 0$ and $\bml < \infty$. The latter implies that $\bms < \infty$ in view of \eqref{AsmMinConv}. This and monotonicity give $0 < \cA{\bms} \le \cA{\bml} $. If $(x, y) \in \cLimShp$ then, by Lemma \ref{LLimShpBd}(a),  $x \le \cA{\bml}^{-1} < \infty$. If $(x, y) \in \cLimShpBd{\ams, \bms}$ then $x \le \cA{\bms}^{-1} < \infty$ by definition \eqref{ELimShpBd}. Hence, (a). 

Turn to the remaining case $(x, y) \in t^{-1}\cClus{t}$. Let $\eta > 0$ to be chosen below. By Theorems \ref{TShpFun} and \ref{TNarShp}, $\cP$-a.s., there exist random $K, L \in \bbZ_{>0}$ with $K \ge L$ such that 
\begin{alignat}{2}
\cGp{m, n} &\ge \cShp{m, n} + \eta (m+n) \quad &&\text{ for } m, n \in \bbZ_{> L} \label{E400} \\
\cGp{m, n} &\ge m\cA{\min \cb{n}{}} + \eta m \quad &&\text{ for } m \in \bbZ_{\ge K} \text{ and } n \in [L] \label{E401}.
\end{alignat}
To prove (b), one may assume that $tx > K$. If $ty > L$ then, by monotonicity, homogeneity, \eqref{E400} and since $x \ge y$, 
\begin{align}
\cShp{x, y} &= t^{-1}\cShp{tx, ty} \le t^{-1} \cShp{\lc tx \rc, \lc ty \rc} \le t^{-1} \{\cGp{\lc tx \rc, \lc ty \rc} + \eta(\lc tx \rc + \lc ty \rc)\} \nonumber \\
&\le 1 + \eta (1+1/L)(x+y) \le 1 + 4\eta x. \label{E403}
\end{align}
Combining the last bound with \eqref{EShpBdLB} and using monotonicity yield 
\begin{align*}
x\cA{\bms} \le x \cA{\bml} \le 1 + 4\eta x. 
\end{align*}
If $ty \le L$ then, by monotonicity and \eqref{E401},  
\begin{align}
x\cA{\bms} \le t^{-1}\lc tx \rc \cA{\min \cb{\lc ty \rc}{}}\le t^{-1}\cGp{\lc tx \rc, \lc ty \rc}+\eta t^{-1}\lc tx \rc \le 1 + 2\eta x. \label{E310}
\end{align}
Since $\cA{\bms} > 0$, one then has $x \le 2\cA{\bms}^{-1}$ in both cases upon taking $\eta$ sufficiently small. Hence, (b). 
\end{proof}

\begin{lem}
\label{LConBlk}
Assume \eqref{AsmLB}, \eqref{AsmVagueConv} and \eqref{AsmMinConv}. Let $S$ be given by \eqref{ETrU}, and define $S_t = S \cap \bbR_{\ge t}^2$ for $t > 0$. Let $0 < \epsilon < 1$. Then, $\cP$-a.s., there exists a random $L \in \bbZ_{>0}$ such that 
\begin{align*}
S_{L/t} \cap (1-\epsilon) \cLimShp \subset S_{L/t} \cap t^{-1}\cClus{t} \subset S_{L/t} \cap (1+\epsilon) \cLimShp \quad \text{ for } t \in \bbR_{>0}. 
\end{align*}
\end{lem}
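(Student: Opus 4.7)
The plan is to compare $\cGp{\lc tx \rc, \lc ty \rc}$ with $t\cShp{x, y}$ via Theorem \ref{TShpFun}, using Lemma \ref{LClusBd} to provide the bounded box in which the argument lives. I would fix a small auxiliary $\delta > 0$ (to be chosen in terms of $\epsilon$ and the constant $C_0$ from Lemma \ref{LClusBd}), invoke Theorem \ref{TShpFun} to produce, $\cP$-a.s., a random $L_0$ such that $|\cGp{m, n} - \cShp{m, n}| \le \delta (m+n)$ whenever $m, n \ge L_0$, and then set $L = \max\{L_0, K+1\}$ with $K$ from Lemma \ref{LClusBd}(b). Two preliminary reductions simplify the bookkeeping. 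First, when $L/t > C_0$, Lemma \ref{LClusBd}(a)--(b) together with $L > K$ force both $S_{L/t} \cap t^{-1}\cClus{t}$ and $S_{L/t} \cap \cLimShp$ to be empty, so the inclusions hold vacuously; the interesting range is $t \ge L/C_0$. Second, in that range any $(x, y)$ appearing in either intersection satisfies $L/t \le x, y \le C_0$.

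For the upper inclusion $S_{L/t} \cap t^{-1}\cClus{t} \subset S_{L/t} \cap (1+\epsilon)\cLimShp$, I would take $(x, y)$ in the left side, set $m = \lc tx \rc$ and $n = \lc ty \rc$, note $m, n \ge L \ge L_0$, and chain monotonicity of $\cShp{}$, positive-homogeneity and Theorem \ref{TShpFun}:
$$t\cShp{x, y} \le \cShp{m, n} \le \cGp{m, n} + \delta(m+n) \le t + \delta\bigl(t(x+y) + 2\bigr) \le t\bigl(1 + 2\delta C_0(1 + 1/L)\bigr).$$
Choosing $\delta$ small (depending on $\epsilon$ and $C_0$) makes the right-hand side at most $t(1 + \epsilon)$, which by positive-homogeneity places $(x, y)$ in $(1+\epsilon)\cLimShp$.

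For the lower inclusion $S_{L/t} \cap (1-\epsilon)\cLimShp \subset S_{L/t} \cap t^{-1}\cClus{t}$, I would take $(x, y)$ in the left side with $\cShp{x, y} \le 1 - \epsilon$, set $m, n$ as above, and compute
$$\cGp{m, n} \le \cShp{m, n} + \delta(m+n) \le t\cShp{x + 1/t, y + 1/t} + \delta\bigl(t(x+y) + 2\bigr).$$
Since $1/t \le \min\{x, y\}/L$, monotonicity and positive-homogeneity give $\cShp{x + 1/t, y + 1/t} \le (1 + 1/L)\cShp{x, y} \le (1 + 1/L)(1 - \epsilon)$. Using $x + y \le 2C_0$ and $2\delta \le 2\delta C_0 t/L$, the bound simplifies to $\cGp{m, n} \le t(1 + 1/L)\bigl[(1-\epsilon) + 2\delta C_0\bigr]$. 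Taking $\delta$ with $2\delta C_0 \le \epsilon/2$ and then $L$ large enough that $(1+1/L)(1 - \epsilon/2) \le 1$ forces $\cGp{m, n} \le t$, i.e., $(x, y) \in t^{-1}\cClus{t}$.

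The main obstacle will be keeping all the bounds uniform in $t$ and correctly handling the degenerate cases (zero $\alpha$ or $\beta$, infinite $\aml$ or $\bml$); all of these are absorbed into Lemma \ref{LClusBd} via the truncation set $S$, with the $t < L/C_0$ regime rendered trivial. Apart from that, the arithmetic is routine: pick $\delta$ small relative to $\epsilon/C_0$, then $L$ large relative to $1/\epsilon$.
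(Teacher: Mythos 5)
Your proposal is correct and follows essentially the same route as the paper: invoke Theorem \ref{TShpFun} to get a centering bound at tolerance $\delta$, reduce to the bounded window $[L/t, C_0]^2$ via Lemma \ref{LClusBd}, and propagate the bound through monotonicity and homogeneity of $\cShp{}$. The only difference is that you make explicit two points the paper handles tersely under ``Choosing $L$ large enough'': that one must take $L$ exceeding the random $K$ from Lemma \ref{LClusBd}(b) so that membership in $S_{L/t}\cap t^{-1}\cClus{t}$ actually forces $x,y\le C_0$, and that the regime $L/t>C_0$ is vacuous for both inclusions.
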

\begin{proof}
Introduce $\delta > 0$ to be tuned later. By Theorem \ref{TShpFun}, $\cP$-a.s., there exists a random $L \in \bbZ_{>0}$ such that 
\begin{align}
|\cGp{m, n}-\cShp{m, n}| \le \delta (m+n) \quad \text{ for } m, n \in \bbZ_{\ge L}. \label{E300}
\end{align}

Fix $t \in \bbR_{>0}$. Assume that $(x, y) \in S_{L/t} \cap (1-\epsilon) \cLimShp$. Choosing $L$ large enough, one has $x, y \le C_0$ where $C_0 > 0$ is the constant from Lemma \ref{LClusBd}. By \eqref{E300}, monotonicity, homogeneity and that $L/t \le x, y \le C_0$,  
\begin{align*}
\cGp{\lc tx \rc, \lc ty \rc} &\le \cShp{\lc tx \rc, \lc ty \rc}+\delta (\lc tx \rc + \lc ty \rc) \le t(1+1/L)(\cShp{x, y}+\delta(x+y)) \\
&\le t(1+1/L)(1-\epsilon+2\delta C_0) \le t.
\end{align*}
The last line inequality holds provided that $\delta$ is sufficiently small and $L$ is sufficiently large. Hence, $(x, y) \in S_{L/t} \cap t^{-1}\cClus{t}$. Assuming this and arguing as in \eqref{E403} also lead to 
\begin{align*}
\cShp{x, y} 
&\le 1 + \delta (1+1/L)(x+y) \le 1 + 4\delta C_0 \le 1+\epsilon 
\end{align*}
for sufficiently small $\delta$. Hence, $(x, y) \in S_{L/t} \cap (1+\epsilon)\cLimShp$. 
\end{proof}

\begin{lem}
\label{LConBd}
Assume \eqref{AsmLB}, \eqref{AsmVagueConv} and \eqref{AsmMinConv}. Let $S$, $\ams$ and $\bms$ be given by \eqref{ETrU} and \eqref{EAB}. Let $0 < \epsilon < 1$. Then, $\cP$-a.s., there exist deterministic $M, N \in \bbZ_{>0}$ and random $K \in \bbZ_{>0}$, $T \in \bbR_{>0}$ such that the following hold for $(x, y) \in \bbR_{>0}^2$ and $t \in \bbR_{\ge T}$. 
\begin{enumerate}[\normalfont (a)]
\item If $(x, y) \in S \cap t^{-1}\cClus{t}$ and $ty \le K$ then $x\cA{\bms} \le 1+\epsilon$. If $x \cA{\bms} \le 1-\epsilon$ and $x\one_{\{\alpha = 0\} \cup \{\bml = \infty\}} \le C$ then $(x, Nt^{-1}) \in S \cap t^{-1} \cClus{t}$. 
\item If $(x, y) \in S \cap t^{-1}\cClus{t}$ and $tx \le K$ then $y\cB{\mathfrak{-A}} \le 1+\epsilon$. If $y \cA{\bms} \le 1-\epsilon$ and $y\one_{\{\beta = 0\} \cup \{\aml = \infty\}}$ then $(Mt^{-1}, y) \in S \cap t^{-1} \cClus{t}$. 
\end{enumerate}
\end{lem}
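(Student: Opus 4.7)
My plan is to deduce both claims of part (a) from Theorem \ref{TNarShp}(a), applied with two different choices of the row-set $S'$; part (b) will then follow from the row/column symmetry that exchanges $\ca{}{}, \cb{}{}$ and $\cA{}, \cB{}$. Lemma \ref{LClusBd}(b) will supply the apriori bound $x \le C_0$ that turns the additive error $\eta m$ of Theorem \ref{TNarShp}(a) into a relative error $\eta x$.

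First I will fix a small auxiliary $\eta > 0$, to be tuned in terms of $\epsilon, C_0, C$ at the end. Using $\bms = \sup_n \min \cb{n}{}$, I will select a deterministic $N \in \bbZ_{>0}$ with $\cA{\min \cb{N}{}}$ within $\eta$ of $\cA{\bms}$; in the degenerate case $\cA{\bms} = 0$ (which occurs when $\alpha = 0$ or $\bms = \infty$), I will instead take $N$ with $\cA{\min \cb{N}{}} \le \eta/(1\vee C)$. The symmetric choice in (b) will furnish $M$.

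For the first claim, given $(x, y) \in S \cap t^{-1}\cClus{t}$ with $ty \le K$, I set $m = \lc tx \rc$ and $n = \lc ty \rc \in [K]$, and invoke Theorem \ref{TNarShp}(a) with $S' = [K]$. This will produce deterministic $\delta > 0$ and random $K' \in \bbZ_{>0}$ such that
\[
\cGp{m, n} \ge m\cA{\min \cb{n}{}} - \eta m \ge m\cA{\bms} - \eta m
\]
once $m \ge K'$ and $n \le \delta m$, with the second inequality coming from $\min \cb{n}{} \le \bms$ and the monotonicity of $\cA{}$. Combining with $\cGp{m, n} \le t$ and $m \le tx + 1$, and using the bound $x \le C_0$ from Lemma \ref{LClusBd}(b), will yield $x\cA{\bms} \le 1 + \epsilon$ after tuning $\eta$ small relative to $\epsilon/C_0$. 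For the second claim, given $x\cA{\bms} \le 1 - \epsilon$ and the truncation hypothesis on $x$, I will apply Theorem \ref{TNarShp}(a) with the singleton $S' = \{N\}$ to get $\cGp{m, N} \le m\cA{\min \cb{N}{}} + \eta m \le m(\cA{\bms} + 2\eta)$ for $m$ large. Plugging $m = \lc tx \rc$ and using the hypothesis then delivers $\cGp{\lc tx \rc, N} \le t$ for $t$ large, hence $(x, Nt^{-1}) \in t^{-1}\cClus{t}$; membership in $S$ is automatic, since the truncation on $x$ is exactly the $S$-condition and $Nt^{-1} \to 0$.

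The principal obstacle I anticipate is the uniform treatment of the degenerate cases $\alpha = 0$, $\bml = \infty$, and $\bms = \infty$, all of which force $\cA{\bms} = 0$. In such cases the hypothesis $x\cA{\bms} \le 1 - \epsilon$ carries no quantitative information, and the control on $x$ must come entirely from the truncation $x \le C$; this is precisely why both the definition of $S$ in \eqref{ETrU} and the hypothesis of the lemma incorporate the indicator $\one_{\{\alpha=0\}\cup\{\bml=\infty\}}$. Accordingly, the deterministic $N$ must be calibrated to $C$ rather than to $\cA{\bms}$ in these cases, so that the bound $\cGp{\lc tx \rc, N} \le (tx+1)\cdot \eta/(1 \vee C) + \eta(tx+1)$ still beats $t$. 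A secondary technicality is verifying the strict hypothesis $\inf \a + \inf_{n \in S'} \min \cb{n}{} > 0$ of Theorem \ref{TNarShp}(a) for $S' = [K]$; this follows from \eqref{AsmMinConv} and \eqref{AsmLB} after choosing $K$ so that $\min \cb{n}{}$ stays uniformly above $-\inf \a$ on $[K]$, while for the singleton $\{N\}$ it is immediate from the choice of $N$.
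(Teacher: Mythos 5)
The core obstacle in your plan for the first claim is a circularity: you invoke Theorem~\ref{TNarShp}(a) with the row-set $S' = [K]$, but $K$ is exactly the random integer the lemma is supposed to produce, so it cannot be fed into Theorem~\ref{TNarShp} as input — there the set $S$ must be a fixed, deterministic subset of $\bbZ_{>0}$ chosen before the theorem generates its random constant. Moreover, in the subsequent proof of Theorem~\ref{TLimShp} the constant $K$ must be enlarged past the random $L$ of Lemma~\ref{LConBlk}, so the first claim genuinely needs to hold up to an unboundedly large random threshold on $n$, which no single deterministic $S'$ passed to Theorem~\ref{TNarShp} can cover. The paper resolves this by combining Theorem~\ref{TNarShp} (applied with a fixed finite row-set, supplying the regime of small $n$) with Corollary~\ref{CThinShp} (covering $m, n \ge L$ with $n \le \delta m$, where by \eqref{AsmMinConv} the quantity $\cA{\min\cb{n}{}}$ is within $\eta$ of $\cA{\bml}$ for $n$ large). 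Together these give the estimate \eqref{E301}, valid simultaneously for all $n \in \bbZ_{>0}$ with $n \le \delta m$ once $m$ exceeds a single random threshold, and the lemma's $K$ is then taken to be that threshold. Your proposal omits Corollary~\ref{CThinShp} entirely, and it is precisely the missing piece.

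The rest of your outline is in good shape and matches the paper: the singleton $S' = \{N\}$ with $N$ deterministic and chosen so that $\cA{\min\cb{N}{}} \le \cA{\bms} + \eta$, the a priori bound $x \le C_0$ from Lemma~\ref{LClusBd}(b), and the use of the truncation $\one_{\{\alpha = 0\} \cup \{\bml = \infty\}}$ in the degenerate cases. One further omission worth flagging: you should treat the regime $\delta t x < K$ separately. There the narrow-rectangle estimate says nothing, but then $\lc tx \rc$ is bounded by a random constant, so $\cGp{\lc tx \rc, N}$ is almost surely bounded and hence at most $t$ once $T$ is taken large enough; the paper closes this case in a final line that your outline does not account for.
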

\begin{proof}
Due to the symmetry, we only prove (a). Let $\eta > 0$ to be chosen below. By Theorem \ref{TNarShp} and Corollary \ref{CThinShp}, $\cP$-a.s., there exist a deterministic $0 < \delta \le 1$ and a random $K \in \bbZ_{>0}$ such that 
\begin{align}
|\cGp{m, n}-m\cA{\min \cb{n}{}}| &\le \eta m &&\quad \text{ for } m \in \bbZ_{\ge K}, n \in \bbZ_{>0} \text{ with } n \le \delta m \label{E301}\\
|\cGp{m, n}-n\cB{-\min \ca{m}{}}| &\le \eta n &&\quad \text{ for } n \in \bbZ_{\ge K}, m \in \bbZ_{>0} \text{ with } m \le \delta n. \nonumber 
\end{align}

Fix $t \in \bbR_{\ge T}$. Assume that $(x, y) \in S \cap t^{-1}\cClus{t}$ and $ty \le K$. Let $C_0 > 0$ denote the constant from Lemma \ref{LClusBd}. If $\delta tx \ge K$ then, by \eqref{E301} and arguing as in \eqref{E310}, 
\begin{align*}
x\cA{\bms} \le 1 + 2\eta x \le 1 + 2\eta C_0 \le 1 + \epsilon 
\end{align*}
for sufficiently large $K$ and sufficiently small $\eta$.  

To prove the second claim in (a), assume now that $x \cA{\bms} \le 1-\epsilon$ and $x\one_{\{\alpha = 0\} \cup \{\bml = \infty\}} \le C$. The assumptions again imply that $x \le C_0$ by Lemma \ref{LClusBd}.  By continuity, there exists $N \in \bbZ_{>0}$ such that 
$\cA{\min \cb{N}{}} \le \cA{\bms} + \eta$. Work with $K \ge N$. If $\delta tx \ge K$ then  
\begin{align*}
\cGp{\lc tx \rc, N} &\le \lc tx \rc \cA{\min \cb{N}{}} + \eta \lc tx \rc \le (1+1/K)tx (\cA{\min \cb{N}{}} + \eta) \\ 
&\le (1+1/K) tx (\cA{\bms} + 2\eta) \le (1+1/K) t (1-\epsilon + 2\eta C_0). 
\end{align*}
After choosing $\eta$ sufficiently small and $K$ sufficiently large, the last term is at most $t$. If $\delta t x < K$ then $x \le K\delta^{-1}t^{-1} \le K\delta^{-1}T^{-1} \le T \le t$ upon taking $T$ sufficiently large. 
\end{proof}

\begin{proof}[Proof of Theorem \ref{TLimShp}]
Corresponding to the given $\epsilon > 0$, $\cP$-a.s., there exist $L \in \bbZ_{>0}$ as in Lemma \ref{LConBlk} and $K, M, N \in \bbZ_{>0}, T \in \bbR_{>0}$ as in Lemma \ref{LConBd}. Take $t \ge T$ below. Let $C_0 > 0$ denote the constant from Lemma \ref{LClusBd}. To prove (a), it suffices to show that 
\begin{align}
\cHDis{S \cap \overline{t^{-1}\cClus{t}}}{S \cap (\cLimShp \cup \cLimShpBd{\ams, \bms})} \le \epsilon C_0 \label{EHDisBnd}
\end{align}
for sufficiently large $T$. The arguments in \eqref{EHDisBnd} are closed subsets of $\bbR_{\ge 0}^2$. The second argument is nonempty and bounded by Lemmas \ref{LLimShpBd}(b) and \ref{LClusBd}(a). The same is true for the first argument with large enough $T$ by Lemma \ref{LClusBd}(b) and the first containment in Lemma \ref{LConBlk}. Hence, the left-hand side in \eqref{EHDisBnd} makes sense per definition of the Hausdorff metric in Subsection \ref{SsHaus}. 

To obtain \eqref{EHDisBnd}, first pick $(x, y) \in S \cap t^{-1}\cClus{t}$. Put 
\begin{align*}x' = (1+\epsilon)^{-1}x \one_{\{tx \ge K\}} \quad \text{ and } \quad y' = (1+\epsilon)^{-1}y \one_{\{ty \ge K\}}.\end{align*} 
If $tx, ty \ge K$ then Lemma \ref{LConBlk} implies that $(x', y') \in S \cap \cLimShp$. If $tx < K$ or $ty < K$ then Lemma \ref{LConBd} yields $(x', y') \in S \cap \cLimShpBd{\ams, \bms}$. Furthermore, 
$0 \le x-x', y-y' \le \epsilon C_0$. Assume now that $(x, y) \in S \cap (\cLimShp \cup \cLimShpBd{\ams, \bms})$. Put 
\begin{align*}
x' &= (1-\epsilon)x\one_{\{tx \ge K\}} + \frac{M}{t} \one_{\{tx < K\}} \quad \text{ and } \quad y' = (1-\epsilon)y\one_{\{ty \ge K\}} + \frac{N}{t}\one_{\{ty < K\}}. 
\end{align*} 
Then $(x', y') \in S \cap t^{-1}\cClus{t}$ by Lemma \ref{LConBlk} if $tx, ty \ge K$ with $K \ge (1-\epsilon)L$ and by Lemma \ref{LConBd} otherwise. Furthermore, $|x-x'|, |y-y'| \le \epsilon C_0$ by taking $T$ large enough. Hence, \eqref{EHDisBnd} is proved. 
\end{proof}

\section{Proofs of Theorems \ref{TLimFlux} and \ref{TNarHeight}}
\label{SPrFlux}

\begin{proof}[Proof of Theorem \ref{TLimFlux}]
We prove only the estimate for the flux process. The estimate for the height process can be obtained in the same manner, and then the estimate for the particle locations is immediate. 

Introduce a constant $c > 0$ to be chosen later. By Theorem \ref{TShpFun}, $\cP$-a.s., there exists $L_0 \in \bbZ_{>0}$ such that 
\begin{align}
|\cGp{m, n}-\cShp{m, n}| \le c\epsilon(m+n) \quad \text{ for } m, n \in \bbZ_{\ge L_0}. \label{E80}
\end{align}

Choose $L \in \bbZ_{>0}$ with $L > 2L_0 \max \{1, \epsilon^{-1}\}$. Fix $m \ge L$ and $t \in \bbR_{\ge 0}$ below. Consider $j \in \bbZ_{>0}$ with $j > \clFlux{m}{t}+\epsilon (m+t)$. Then, by definition of the flux process and rearranging terms, 
\begin{align}
(m+j)\cA{z} + j \cB{z} > t + \epsilon (\cA{z}+\cB{z})(t+j) \quad \text{ for } z \in (-\aml, \bml). \label{E81}
\end{align}
In particular, $t < c_0 (m+j)$ for some constant $c_0 > 0$. Now setting $z = \cMaMin{m+j, j}$ in \eqref{E81}, using monotonicity of $\cA{}$ and $\cB{}$ and the bound on $t$, one obtains that 
\begin{align*}
\cShp{m+j, j} = (m+j) \cA{\EMin} + j \cB{\EMin} &> t + \epsilon (\cA{\bml}+\cB{-\aml})(c_0(m+j) + j) \\
&> t + c_1 \epsilon (m+2j)
\end{align*}
for some constant $c_1 > 0$. Since $j \ge L_0$, choosing $c \le c_1$ yields 
\begin{align*}\cGp{m+j, j} \ge \cShp{m+j, j}-c_1 \epsilon (m+2j) >  t\end{align*}
in view of \eqref{E80}. Then it follows from the choice of $m$ that 
\begin{align*}
\cFluxp{m}{t} \le \clFlux{m}{t}+\epsilon (m+t). 
\end{align*}

For the lower bound, it suffices to consider the case $\clFlux{m}{t} \ge \epsilon (m+t)$ since the flux process is nonnegative. Now consider $j \in \bbZ_{>0}$ with $L_0 \le j < \clFlux{m}{t}-\epsilon (m+t)/2$. Such $j$ exists since the right-hand side is greater than $L_0$. By definition \eqref{ELimHeight}, there exists $z \in (-\aml, \bml)$ such that 
\begin{align}
(m+j)\cA{z} + j \cB{z} < t - \frac{\epsilon}{2} (\cA{z}+\cB{z})(t+j). \label{E83}
\end{align}
In particular, $t \ge j (\cA{z}+\cB{z}) \ge j (\cA{\bml}+\cB{-\aml})$. Then, by \eqref{E83}, 
\begin{align*}
\cShp{m+j, j} < t - c_2 \epsilon (m+2j) 
\end{align*}
for some constant $c_2 > 0$. Now choosing $c \le c_2$ yields 
\begin{align*}
\cGp{m+j, j} \le \cShp{m+j, j}+c_2 \epsilon (m+2j) < t. 
\end{align*}
One then concludes from the choice of $m$ that 
\begin{equation*}
\cFluxp{m}{t} \ge \clFlux{m}{t}-\frac{\epsilon}{2} (m+t). \qedhere
\end{equation*}
\end{proof}

\begin{proof}[Proof of Theorem \ref{TNarHeight}]
Abbreviate $A = \cA{\min \cb{n}{}}$. Assume that $\epsilon \le 1$ and let $0 < c < \min \{A, A^2\}$. By Theorem \ref{TNarShp}, there exists $K \in \bbZ_{>0}$ such that 
\begin{align}
|\cGp{m, n}-m A| \le c \epsilon m \quad \text{ for } m \ge \bbZ_{\ge K}. \label{E84}
\end{align}
Pick $T \ge K/\epsilon$ and let $t \ge T$ below. 

If $m \in \bbZ_{>0}$ with $m > t (1/A + \epsilon)$ then 
$m \ge K$ and, by \eqref{E84}, 
\begin{align*}
\cGp{m, n} \ge m (A-c\epsilon) > t (\epsilon + 1/A)(A-c\epsilon) = t + \epsilon t(A - c/A - c\epsilon) > t
\end{align*}
Hence, $\cHp{n}{t} \le t(1/A + \epsilon)$. Now assuming $2\epsilon < 1/A$, let $m \in \bbZ_{\ge K}$ with $m \le t(1/A-\epsilon)$ noting that the right-hand side is at least $K$. Therefore,  by \eqref{E84}, 
\begin{align*}
\cGp{m, n} \le m(A+c\epsilon) < t (1/A-\epsilon) (A+c\epsilon) = t - \epsilon t (A - c/A - c\epsilon) < t. 
\end{align*}
Hence, $\cHp{n}{t} \ge t(1/A-\epsilon)$. 
\end{proof}

\appendix 

\section{}
\label{SApdx}

\subsection{Concentration bounds for sums of independent exponential random variables}

\begin{lem}
\label{LComBnd}
Let $m, p \in \bbZ_{>0}$ and $x_1, \dotsc, x_m > 0$. Then 
\begin{align*}
\sum_{\substack{(k_1, \dotsc, k_m) \,\in\, \bbZ_{\ge 0}^m \\ \sum_{i \in [m]} k_i = 2p \\ k_i \neq 1\,\forall  i \in [m]}} \; \prod_{i \in [m]} x_i^{k_i} \le 3^p \bigg(\sum_{i \in [m]} x_i^2\bigg)^p. 
\end{align*}
\end{lem}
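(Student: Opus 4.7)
The plan is to use a generating function identity to reformulate the LHS, then apply standard symmetric function bounds. Define $g(y) = 1 + \sum_{k \ge 2} y^k$, the generating function for the allowed exponents (everything except $1$). The polynomial identity $(1-y+y^2)(1+y) = 1+y^3$ gives the key rewriting
\begin{align*}
g(y) = \frac{1 - y + y^2}{1 - y} = \frac{1 + y^3}{1 - y^2},
\end{align*}
which elegantly separates the ``cube'' contributions from the ``square'' ones. Since the LHS is exactly $[t^{2p}] \prod_{i=1}^m g(x_i t)$, we obtain
\begin{align*}
\text{LHS} = [t^{2p}] \prod_{i=1}^m (1+(x_i t)^3) \cdot \prod_{i=1}^m (1-(x_i t)^2)^{-1}.
\end{align*}

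Expanding the two products gives $\prod_i (1+(x_i t)^3) = \sum_{S \subseteq [m]} t^{3|S|} \prod_{i \in S} x_i^3$ and $\prod_i (1-(x_i t)^2)^{-1} = \sum_{n \ge 0} t^{2n} h_n(x_1^2, \ldots, x_m^2)$, where $h_n$ denotes the complete homogeneous symmetric polynomial. Extracting the $t^{2p}$ coefficient forces $3|S| + 2n = 2p$, so $|S| = 2q$ must be even and $n = p - 3q$ with $0 \le q \le p/3$. This yields the clean representation
\begin{align*}
\text{LHS} = \sum_{q=0}^{\lfloor p/3 \rfloor} e_{2q}(x_1^3, \ldots, x_m^3) \, h_{p-3q}(x_1^2, \ldots, x_m^2),
\end{align*}
where $e_k$ is the elementary symmetric polynomial.

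To conclude, I would invoke three standard inequalities. Comparing $(\sum y_i)^n = \sum_{\sum j_i = n} \binom{n}{j_1, \ldots, j_m} \prod y_i^{j_i}$ term by term (all multinomial coefficients are at least $1$, while the squarefree monomials of degree $k$ appear with coefficient exactly $k!$) gives $h_n(y) \le (\sum y_i)^n$ and $e_k(y) \le (\sum y_i)^k/k!$. Next, Cauchy--Schwarz applied to $\sum x_i \cdot x_i^2$, combined with $\sum x_i^4 \le (\sum x_i^2)^2$, yields $\sum x_i^3 \le (\sum x_i^2)^{3/2}$. Plugging everything in,
\begin{align*}
\text{LHS} \le \sum_{q=0}^{\lfloor p/3 \rfloor} \frac{\bigl(\sum x_i^3\bigr)^{2q}}{(2q)!} \bigl(\textstyle\sum x_i^2\bigr)^{p-3q} \le \bigl(\textstyle\sum x_i^2\bigr)^p \sum_{q \ge 0} \frac{1}{(2q)!} = \cosh(1)\bigl(\textstyle\sum x_i^2\bigr)^p,
\end{align*}
which is bounded by $3^p (\sum x_i^2)^p$ since $\cosh(1) \approx 1.543 \le 3 \le 3^p$ for $p \ge 1$ (and $p=0$ is trivial).

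There is no real obstacle: the main insight is the generating-function identity $g(y) = (1+y^3)/(1-y^2)$, which converts the awkward ``no $k_i = 1$'' constraint into a product of two tractable factors. The bound $3^p$ is very loose; the argument actually yields the absolute constant $\cosh(1)$, so the exponential-in-$p$ factor is unnecessary (though harmless for the applications in Section~\ref{SConBdLPP}).
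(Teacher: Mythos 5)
Your argument is correct, and it takes a genuinely different route from the paper's. The paper's proof is a direct combinatorial "pairing" argument: it constructs two maps $f, g \colon S_{2p}' \to S_p$ (from tuples summing to $2p$ with no coordinate equal to $1$, to tuples summing to $p$) by sorting the odd coordinates, applies the inequality $2 \le t + t^{-1}$ to show $2\prod_i x_i^{k_i} \le \prod_i x_i^{2f(\bfk)_i} + \prod_i x_i^{2g(\bfk)_i}$, and then bounds preimage sizes by $3^p$ before invoking the multinomial theorem. You instead package the "no $k_i=1$" constraint into the ordinary generating function $g(y)=1+\sum_{k\ge2}y^k$, exploit the identity $g(y)=(1+y^3)/(1-y^2)$ to split the problem into an elementary and a complete symmetric polynomial, and bound each by standard multinomial estimates plus a Cauchy--Schwarz step. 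Your route is cleaner and buys a strictly better constant: you obtain the absolute bound $\cosh(1)\bigl(\sum_i x_i^2\bigr)^p \le 3^p\bigl(\sum_i x_i^2\bigr)^p$, whereas the paper's preimage-counting step is what forces the exponential factor $3^p$. Since the lemma is only used downstream through Lemma \ref{LExpCon}, where the constant $C_p$ is allowed to depend on $p$, the sharper constant is not exploited, but it is a genuine (if minor) improvement. The paper's approach, on the other hand, is entirely elementary and self-contained, requiring no symmetric-function machinery.
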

\begin{proof}
For $n \in \bbZ_{> 0}$, write $S_n$ for the set of all $\bfk = (k_1, \dotsc, k_m) \in \bbZ_{\ge 0}^m$ such that $\sum_{i \in [m]} k_i = n$. Let $S_n'$ denote the set of all $\bfk \in S_n$ with $k_i \neq 1$ for $i \in [m]$. Define two maps $f, g: S_{2p}' \rightarrow S_p$ as follows: For each $\bfk \in S_{2p}'$, there is an even number $2l$ of indices $i \in [m]$ for which $k_i$ is odd. Let $i_1 < \dotsm < i_{2l}$ denote these indices. Define $f(\bfk) \in S_{p}$ and $g(\bfk) \in S_{p}$ by setting 
\begin{align*}
f(\bfk)_{i} = \begin{cases}(k_i+1)/2 \quad &\text { if } i = i_s \text{ for some } s \in [l] \\ (k_i-1)/2 \quad &\text{ if } i = i_s \text{ for some } s \in [2l] \smallsetminus [l] \\ k_i/2 \quad &\text{ otherwise. }\end{cases} \\
g(\bfk)_{i} = \begin{cases}(k_i-1)/2 \quad &\text { if } i = i_s \text{ for some } s \in [l] \\ (k_i+1)/2 \quad &\text{ if } i = i_s \text{ for some } s \in [2l] \smallsetminus [l] \\ k_i/2 \quad &\text{ otherwise. }\end{cases} 
\end{align*}
The inequality $2 \le t+1/t$ applied with $t = \prod_{s=1}^l x_{i_s} \prod_{s=l+1}^{2l} 1/x_{i_s}$ yields  
\begin{align*}
2\prod_{i \in [m]} x_i^{k_i} \le \prod_{i \in [m]} x_i^{2f(\bfk)_i}+\prod_{i \in [m]} x_i^{2g(\bfk)_i}, 
\end{align*}
which justifies the first inequality below. 
Note also that $k_i \in \{2f(\bfk)_i, 2f(\bfk)_i+1, 2f(\bfk)_i-1\}$ and, because $k_i \neq 1$, one has $k_i > 0$ if and only if $f(\bfk)_i > 0$ for $i \in [m]$. Then, since each $\bfu = (u_1, \dotsc, u_m) \in S_p$ has at most $p$ nonzero coordinates, the number of elements in the preimage $f^{-1}\{\bfu\}$ is bounded by $3^p$. The same for $g^{-1}\{\bfu\}$. Hence, the second inequality below. The final line inserts the multinomial coefficients and appeals to the multinomial theorem. 
\begin{align*}
\sum_{\bfk = (k_1, \dotsc, k_m) \in S_{2p}'} \prod_{i \in [m]} x_i^{k_i} &\le \frac{1}{2}\sum_{\bfk = (k_1, \dotsc, k_m) \in S_{2p}'} \bigg\{\prod_{i \in [m]} x_i^{2f(\bfk)_i}+\prod_{i \in [m]} x_i^{2g(\bfk)_i}\bigg\} \\
&\le 3^p \sum_{\bfu = (u_1, \dotsc, u_m) \in S_{p}} \prod_{i \in [m]} x_i^{2u_i} \\
&\le 3^p \sum_{\bfu = (u_1, \dotsc, u_m) \in S_{p}} \frac{p!}{\prod_{i \in [m]} u_i!}\prod_{i \in [m]} x_i^{2u_i} = 3^p \bigg(\sum_{i \in [m]} x_i^2\bigg)^p. \qedhere
\end{align*}
\end{proof}

An application of the preceding lemma yields the concentration inequality below. 
\begin{lem}
\label{LExpCon}
Let $X_i\sim$ {\rm Exp}$(\lambda_i)$ be mutually independent exponential random variables.
Then for each $p \in \bbZ_{> 0}$ there exists a constant $C_p > 0$ {\rm(}depending only on $p${\rm)} such that, for all  $s > 0$ and $n\in\Z_{>0}$, 
\begin{align*}
\bfP\bigg\{\bigg|\sum_{i=1}^n X_i-\sum_{i=1}^n\frac{1}{\lambda_i}\bigg| \ge s\sqrt{\sum_{i=1}^n \frac{1}{\lambda_i^2}}\bigg\} \le \frac{C_p}{s^{p}}. 
\end{align*}
\end{lem}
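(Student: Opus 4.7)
\medskip

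The plan is to apply Markov's inequality to the $(2p)$-th moment of the centered sum $S_n-\mu=\sum_{i=1}^n(X_i-\lambda_i^{-1})$ and use Lemma~\ref{LComBnd} to handle the resulting combinatorial sum. Write $Y_i = X_i-\lambda_i^{-1}$, so $\mathbb{E}[Y_i]=0$ and, by scaling to the Exp$(1)$ case, $|\mathbb{E}[Y_i^k]|\le c_k \lambda_i^{-k}$ where $c_k=\mathbb{E}[|Z-1|^k]$ for $Z\sim\mathrm{Exp}(1)$. A one-line estimate $|Z-1|^k\le(|Z|+1)^k$ together with $\mathbb{E}[Z^j]=j!$ gives $c_k\le e\cdot k!$, a universal bound that will exactly cancel the factorial denominators coming from the multinomial expansion.

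Expanding by the multinomial theorem and using independence,
\begin{equation*}
\mathbb{E}\bigl[(S_n-\mu)^{2p}\bigr] \;=\; \sum_{k_1+\dotsb+k_n=2p}\frac{(2p)!}{k_1!\cdots k_n!}\prod_{i=1}^n \mathbb{E}[Y_i^{k_i}].
\end{equation*}
The crucial observation is that every term with some $k_i=1$ vanishes because $\mathbb{E}[Y_i]=0$, so the sum is restricted to tuples with $k_i\neq 1$ for all $i$ — precisely the index set appearing in Lemma~\ref{LComBnd}. On this restricted set, at most $p$ of the $k_i$ are nonzero (each nonzero one is $\ge 2$), so $\prod_i c_{k_i}\le e^{p}\prod_i k_i!$, and the factorials cancel. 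Setting $x_i=\lambda_i^{-1}$ and invoking Lemma~\ref{LComBnd} yields
\begin{equation*}
\mathbb{E}\bigl[(S_n-\mu)^{2p}\bigr]\;\le\;(2p)!\,e^{p}\sum_{\substack{k_1+\dotsb+k_n=2p\\k_i\neq 1}}\prod_{i=1}^n x_i^{k_i}\;\le\;(2p)!\,(3e)^{p}\Bigl(\sum_{i=1}^n\lambda_i^{-2}\Bigr)^{p}.
\end{equation*}

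The proof concludes by Markov's inequality: writing $\sigma^2=\sum_i\lambda_i^{-2}$,
\begin{equation*}
\bfP\bigl\{|S_n-\mu|\ge s\sigma\bigr\}\;\le\;\frac{\mathbb{E}[(S_n-\mu)^{2p}]}{s^{2p}\sigma^{2p}}\;\le\;\frac{(2p)!\,(3e)^{p}}{s^{2p}},
\end{equation*}
which is even stronger than the claimed $C_p s^{-p}$ bound (apply the above with $\lceil p/2\rceil$ in place of $p$ to match the statement, and absorb the resulting constant into $C_p$). There is no essential obstacle: the only ingredient beyond Lemma~\ref{LComBnd} is the universal factorial bound on the centered moments of an exponential, and the mean-zero cancellation that restricts the multinomial sum to exactly the tuples Lemma~\ref{LComBnd} was designed for.
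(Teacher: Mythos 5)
Your proof is correct and follows essentially the same route as the paper: Markov's inequality on the $(2p)$-th moment, the multinomial expansion with the mean-zero cancellation killing every tuple with some $k_i=1$, and Lemma~\ref{LComBnd} to close the combinatorial sum. The only cosmetic difference is that the paper computes the exact $q$-th central moment of $\mathrm{Exp}(1)$ as $c_q = q!\sum_{l=0}^q(-1)^l/l!$ (the derangement number, which is nonnegative and $\le q!$), whereas you bound the absolute central moment by $e\cdot k!$ via $|Z-1|^k\le(Z+1)^k$; both choices cancel the multinomial denominators and give a constant of the same flavor.
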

\begin{proof}
With $S = \sum_{i=1}^n X_i$ the claimed inequality is 
\begin{align}
\P\{|S-\E S| \ge s\sqrt{\Var S}\} \le C_p s^{-p}. \label{E19}
\end{align}
For $i \in [n]$ and $q \in \bbZ_{\ge 0}$, a brief computation gives the $q$th central moment of $X_i$ as 
\begin{align*}
\E [(X_i-\lambda_i^{-1})^q] = c_q\lambda_i^{-q} \quad \text{ where } \quad c_q = q!\displaystyle \sum_{l=0}^q \frac{(-1)^l}{l!}. 
\end{align*}
From this, the multinomial theorem and independence, 
\begin{align}
\E[|S-\E S|^{2p}] = \E\bigg[\bigg(\sum_{i=1}^{n} X_i-\sum_{i=1}^{n}\frac{1}{\lambda_i}\bigg)^{2p}\bigg] = \sum \limits_{\substack{(u_1, \dotsc, u_n) \in \bbZ_{\ge 0} \\ \sum_{i \in [n]}u_i = 2p}} \frac{(2p)!}{\prod_{i=1}^n u_i!}\prod_{i=1}^{n} \frac{c_{u_i}}{\lambda_i^{u_i}}. \label{E1}
\end{align}
Since $c_1 = 0$, the condition $u_i \neq 1$ for $i \in [k]$ can be slipped into the outer sum without breaking the equality. Then, by virtue of Lemma \ref{LComBnd} and since $0\le c_{u_i} \le u_i!$, the right-hand side of \eqref{E1} is at most 
\begin{align*}
(2p)! \sum \limits_{\substack{(u_1, \dotsc, u_n) \in \bbZ_{\ge 0} \\ \sum_{i \in [n]}u_i = 2p \\ u_i \neq 1, i \in [n]}} \prod_{i=1}^{n} \frac{1}{\lambda_i^{u_i}} \le (2p)! 3^p \bigg(\sum_{i =1}^n \frac{1}{\lambda_i^2}\bigg)^p = C_p (\Var S)^p, 
\end{align*}
where $C_p = (2p)!3^p$.   By Markov's inequality, 
\begin{align*}
\P\{|S-\E S| \ge s \sqrt{\Var S}\} \le \frac{\E[|S-\E S|^{2p}]}{s^{2p} (\Var S)^p} \le \frac{C_p}{s^{2p}}, 
\end{align*}
which implies \eqref{E19} if $s \ge 1$. Since $C_p > 1$, \eqref{E19} also trivially holds if $s < 1$. 
\end{proof}

\subsection{Vague convergence}
\label{SsVagConv}
Let $\sM(\bbR)$ and $\sC_0(\bbR)$, respectively, denote the spaces of real-valued Borel measures on $\bbR$ and real-valued, continuous functions on $\bbR$ vanishing at infinity. The \emph{vague topology} on $\sM(\bbR)$ is the minimal topology such that the maps $\mu \mapsto \int_{\bbR} f\dd \mu$ for $f \in \sC_0(\bbR)$ are continuous. With this definition, a sequence $(\mu_n)_{n \in \bbZ_{>0}}$ in $\sM(\bbR)$ converges to $\mu \in \sM(\bbR)$ in the vague topology (\emph{vaguely}) if and only if $\int_{\bbR} f \dd \mu_n \rightarrow \int_{\bbR} f\dd \mu$ as $n \rightarrow \infty$ for any $f \in \sC_0(\bbR)$. We recall the following standard facts \cite{durr, foll-ra}. 

\begin{lem}
\label{LVagConBnd}
Let $(\mu_n)_{n \in \bbZ_{>0}}$ be a sequence in $\sM(\bbR)$ such that $\mu_n \rightarrow \mu$ vaguely as $n \rightarrow \infty$ for some $\mu \in \sM(\bbR)$. Then there exists $C > 0$ such that $\mu_n(\bbR) \le C$ for $n \in \bbZ_{>0}$.
\end{lem}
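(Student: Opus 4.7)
The plan is to deduce the uniform total-mass bound from the uniform boundedness principle (Banach--Steinhaus) after identifying $\sM(\bbR)$ with the topological dual of $\sC_0(\bbR)$. First I would recall that by the Riesz--Markov representation theorem every $\nu \in \sM(\bbR)$ corresponds to a bounded linear functional $\Lambda_\nu: \sC_0(\bbR) \to \bbR$ given by $\Lambda_\nu(f) = \int_\bbR f\,\mathrm{d}\nu$, and the operator norm satisfies $\|\Lambda_\nu\|_{\sC_0(\bbR)^*} = |\nu|(\bbR)$, the total variation of $\nu$. Under this identification, vague convergence is precisely weak-$*$ convergence in $\sC_0(\bbR)^*$.

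Next I would invoke the hypothesis: for each fixed $f \in \sC_0(\bbR)$, the sequence $\Lambda_{\mu_n}(f) = \int_\bbR f\,\mathrm{d}\mu_n$ converges to $\Lambda_\mu(f)$, hence is bounded in $n$. Since $\sC_0(\bbR)$ is a Banach space (with the sup norm), the Banach--Steinhaus theorem applied to the family $\{\Lambda_{\mu_n}\}_{n \in \bbZ_{>0}}$ yields
\begin{equation*}
\sup_{n \in \bbZ_{>0}} |\mu_n|(\bbR) = \sup_{n \in \bbZ_{>0}} \|\Lambda_{\mu_n}\|_{\sC_0(\bbR)^*} < \infty.
\end{equation*}
Setting $C$ equal to this finite supremum gives $\mu_n(\bbR) \le |\mu_n|(\bbR) \le C$ for every $n$, which is the desired estimate.

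There is no real obstacle here; the only mild point requiring care is the correct identification of the vague topology with the weak-$*$ topology and the matching of the functional norm with the total variation, both of which are standard. One could alternatively give a direct proof by contradiction: if $|\mu_n|(\bbR) \to \infty$ along a subsequence, a gliding-hump construction produces $f \in \sC_0(\bbR)$ for which $\int f \,\mathrm{d}\mu_n$ is unbounded, contradicting vague convergence; but invoking Banach--Steinhaus is cleaner and shorter.
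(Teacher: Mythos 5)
The paper does not supply a proof of this lemma at all; it is stated as a standard fact and attributed to the references \cite{durr, foll-ra}. Your Banach--Steinhaus argument is precisely the canonical proof found in those sources: identify $\sM(\bbR)$ with $\sC_0(\bbR)^*$ via Riesz--Markov so that vague convergence becomes weak-$*$ convergence, observe that weak-$*$ convergence implies pointwise boundedness of the functionals $\Lambda_{\mu_n}$, and invoke the uniform boundedness principle to get $\sup_n \|\Lambda_{\mu_n}\| = \sup_n |\mu_n|(\bbR) < \infty$, from which $\mu_n(\bbR) \le |\mu_n|(\bbR) \le C$ follows. The proof is correct and complete; there is nothing to compare against since the paper leaves this to the cited textbooks.
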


Denote the subspace of subprobability measures on $\bbR$ by 
\begin{align*}\sM^+_{\le 1}(\bbR) = \{\mu \in \sM(\bbR): \mu \ge 0 \text{ and } \mu(\bbR) \le 1\}.\end{align*} 
\begin{lem}
\label{LSpmVagComp} 
For any sequence $(\mu_n)_{n \in \bbZ_{>0}}$ in $\sM^+_{\le 1}(\bbR)$, there exists a subsequence $(\mu_{n_k})_{k \in \bbZ_{>0}}$ in $\sM^+_{\le 1}(\bbR)$ and $\mu \in \sM^+_{\le 1}(\bbR)$ such that $\mu_{n_k} \rightarrow \mu$ vaguely as $k \rightarrow \infty$. 
\end{lem}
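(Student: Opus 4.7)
The plan is to identify vague convergence with weak-$*$ convergence on the dual of $\sC_0(\bbR)$, and then invoke Banach--Alaoglu together with the separability of $\sC_0(\bbR)$. By the Riesz--Markov--Kakutani representation theorem, $\sM(\bbR)$ is isometrically isomorphic (as a Banach space) to the continuous dual $\sC_0(\bbR)^*$ via $\mu \mapsto \Lambda_\mu$, where $\Lambda_\mu(f) = \int_\bbR f \,\dd\mu$, and under this identification the total variation norm of $\mu$ equals the operator norm of $\Lambda_\mu$. The definition of the vague topology in Subsection \ref{SsVagConv} is exactly the weak-$*$ topology pulled back to $\sM(\bbR)$.

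Under this identification, each $\mu \in \sM^+_{\le 1}(\bbR)$ has $\|\mu\| = \mu(\bbR) \le 1$, so the sequence $(\mu_n)_{n \in \bbZ_{>0}}$ lies inside the closed unit ball $B$ of $\sC_0(\bbR)^*$. By the Banach--Alaoglu theorem, $B$ is compact in the weak-$*$ topology. Since $\bbR$ is a second countable, locally compact Hausdorff space, $\sC_0(\bbR)$ is separable (for instance, take a countable dense family of compactly supported continuous functions via the Stone--Weierstrass theorem). Separability of the predual implies that the weak-$*$ topology restricted to $B$ is metrizable, so $B$ is sequentially compact. In particular, there exist a subsequence $(\mu_{n_k})_{k \in \bbZ_{>0}}$ and some $\mu \in B \subset \sM(\bbR)$ such that $\mu_{n_k} \to \mu$ vaguely.

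It remains to verify that the limit $\mu$ actually lies in $\sM^+_{\le 1}(\bbR)$. For nonnegativity, pick any $f \in \sC_0(\bbR)$ with $f \ge 0$; then $\int f\,\dd\mu = \lim_{k \to \infty} \int f\,\dd\mu_{n_k} \ge 0$, and since the collection of such $f$ is enough to separate positive measures from signed ones (again via Riesz representation), we conclude $\mu \ge 0$. For the total mass bound, for any $f \in \sC_0(\bbR)$ with $0 \le f \le 1$ we have $\int f\,\dd\mu = \lim_{k \to \infty} \int f\,\dd\mu_{n_k} \le 1$. Taking an increasing sequence $f_j \in \sC_0(\bbR)$ with $0 \le f_j \le 1$ and $f_j \uparrow \one_\bbR$ pointwise, e.g.\ $f_j(x) = \max\{0, 1 - \mathrm{dist}(x, [-j, j])\}$, the monotone convergence theorem gives $\mu(\bbR) = \lim_{j \to \infty} \int f_j\,\dd\mu \le 1$. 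Hence $\mu \in \sM^+_{\le 1}(\bbR)$, which completes the argument.

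There is no real obstacle here; the only point requiring care is the final check that both nonnegativity and the subprobability bound survive the vague limit, which reduces to testing against nonnegative $f \in \sC_0(\bbR)$ and against the approximating sequence $f_j \uparrow \one_\bbR$, respectively.
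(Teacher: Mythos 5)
The paper does not supply a proof here; it states the lemma as a standard fact and cites \cite{durr, foll-ra}. Your proof via Riesz--Markov--Kakutani, Banach--Alaoglu, and metrizability of the weak-$*$ topology on bounded sets (thanks to separability of $\sC_0(\bbR)$) is correct and is the standard functional-analytic route, in the spirit of the Folland reference. A common alternative, closer to the Durrett reference, is Helly's selection theorem: encode each $\mu_n$ by its distribution function $F_n(x)=\mu_n((-\infty,x])$, extract via a diagonal argument a subsequence converging at every rational, take a right-continuous modification $F$ of the limit, and check that the resulting subprobability measure is the vague limit. The Helly route is more elementary and self-contained (no duality theory); your route is shorter if one is willing to quote the general theorems. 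The only minor imprecision in your write-up is the phrase justifying nonnegativity of the limit; the cleaner statement is simply that the limiting functional is positive on $\sC_0(\bbR)$ and hence, by the Riesz representation theorem for positive functionals, its representing measure is nonnegative. Your mass bound via $f_j \uparrow \one_\bbR$ and monotone convergence is correct as written.
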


\subsection{Cauchy transform}
\label{SsCauTr}
Write $\sM^+(\bbR)$ for the space of $\bbR_{\ge 0}$-valued Borel measures on $\bbR$. Let $\mu \in \sM^+(\bbR)$ and $D = \bbC \smallsetminus \supp \mu$. The \emph{Cauchy transform} of $\mu$ is defined as the convolution 
\begin{align}
\label{ECatDef}
\bCat{\mu}{z} = \int_\bbR \frac{\mu(\dd t)}{z-t} \quad \text{ for } z \in D. 
\end{align}
The integral 
is well-defined since $\mu(\bbR) < \infty$, $\mu(\bbR \smallsetminus \supp \mu) = 0$ and the distance $\Delta(z) = \inf \limits_{t \,\in\, \supp \mu}|z-t| > 0$ for $z \in D$, the latter due to $\supp \mu$ being closed.
By direct computation, 
\begin{align}
\label{ELip}
|\bCat{\mu}{z}-\bCat{\mu}{w}| = \left|(w-z) \int_{\bbR} \frac{\mu(\dd t)}{(z-t)(w-t)} \right| \le \frac{|z-w|\mu(\bbR)}{\Delta(z)\Delta(w)} \quad \text{ for } z, w \in D, 
\end{align}
and $\Cat_{\mu}$ is holomorphic on $D$ with 
\begin{align}
\label{E42}
\partial_z^n \bCat{\mu}{z} = (-1)^n n! \int_{\bbR} \frac{\mu(\dd t)}{(z-t)^{n+1}} \quad \text{ for } z \in D \text{ and } n \in \bbZ_{\ge 0}. 
\end{align}
\begin{lem}
\label{LCatCont}
Let $\mu \in \sM^+(\bbR)$ and $(\mu_n)_{n \in \bbZ_{>0}}$ be a sequence in $\sM^+(\bbR)$ such that $\lim \limits_{n \rightarrow \infty}\mu_n = \mu$ vaguely. Let $S \subset \bbC$ denote the closure of $\supp \mu \cup \bigcup_{n \in \bbZ_{>0}} \supp \mu_n$. Let $K \subset \bbC \smallsetminus S$ be compact. Then, for any $\epsilon > 0$, there exists $n_0 \in \bbZ_{>0}$ such that 
\begin{align*}
|\bCat{\mu_n}{z}-\bCat{\mu}{z}| < \epsilon \quad \text{ for all } n \ge n_0 \text{ and } z \in K. 
\end{align*}
\end{lem}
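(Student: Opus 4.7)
The plan is to combine the Lipschitz bound \eqref{ELip} with the elementary observation that for each fixed $z \notin S$ the function $t \mapsto (z-t)^{-1}$ lies in $\sC_0(\bbR)$. First I would extract two uniform quantitative inputs. Since $K$ is compact and $S$ is closed with $K \cap S = \varnothing$, the distance $\delta = \inf\{|z-t| : z \in K, t \in S\}$ is strictly positive; in particular $\Delta_\mu(z) \ge \delta$ and $\Delta_{\mu_n}(z) \ge \delta$ uniformly over $z \in K$ and $n \in \bbZ_{>0}$. By Lemma \ref{LVagConBnd} there is $C > 0$ such that $\mu(\bbR) \le C$ and $\mu_n(\bbR) \le C$ for all $n$.

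Next I would establish pointwise convergence $\bCat{\mu_n}{z} \to \bCat{\mu}{z}$ for each fixed $z \in K$. For such $z$, the function $f_z(t) = (z-t)^{-1}$ is continuous on $\bbR$ (the singularity at $t = z$ is harmless since $z \notin \bbR$ whenever $\Im z \neq 0$, and even when $z \in \bbR$ one has $z \notin S \supset \supp \mu_n \cup \supp \mu$, so $f_z$ agrees on these supports with a continuous function that decays at infinity). Moreover $|f_z(t)| \le 1/|t-\Re z - \ii\Im z| \to 0$ as $|t|\to\infty$, so $f_z \in \sC_0(\bbR)$ and vague convergence gives $\int f_z\, d\mu_n \to \int f_z\, d\mu$.

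I would then upgrade this to uniform convergence on $K$ via a standard equicontinuity argument. By the Lipschitz estimate \eqref{ELip},
\[
|\bCat{\nu}{z} - \bCat{\nu}{w}| \le \frac{C|z-w|}{\delta^2} \quad\text{for all } z, w \in K \text{ and } \nu \in \{\mu\} \cup \{\mu_n : n \in \bbZ_{>0}\}.
\]
Given $\epsilon > 0$, cover the compact set $K$ by finitely many balls of radius $\epsilon \delta^2/(3C)$ centered at points $z_1,\dotsc,z_N \in K$. By pointwise convergence, choose $n_0$ so that $|\bCat{\mu_n}{z_k} - \bCat{\mu}{z_k}| < \epsilon/3$ for all $n \ge n_0$ and $k \in [N]$. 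For arbitrary $z \in K$, pick $z_k$ within distance $\epsilon\delta^2/(3C)$ of $z$ and estimate
\[
|\bCat{\mu_n}{z} - \bCat{\mu}{z}| \le |\bCat{\mu_n}{z} - \bCat{\mu_n}{z_k}| + |\bCat{\mu_n}{z_k} - \bCat{\mu}{z_k}| + |\bCat{\mu}{z_k} - \bCat{\mu}{z}| < \epsilon
\]
uniformly in $z \in K$ and $n \ge n_0$. No step looks technically challenging; the only point requiring attention is confirming that $f_z \in \sC_0(\bbR)$, which uses precisely the hypothesis that $K$ avoids the closure $S$ of all relevant supports, not just $\supp \mu$.
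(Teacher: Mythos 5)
Your proof is correct and follows essentially the same route as the paper's: establish pointwise convergence from vague convergence (noting $f_z$ can be replaced by a $\sC_0(\bbR)$ function agreeing with it on $S$), obtain uniform Lipschitz control on $K$ from \eqref{ELip} together with the separation $\delta > 0$ and the uniform mass bound from Lemma \ref{LVagConBnd}, and upgrade to uniform convergence; the only cosmetic difference is that you carry out the $\epsilon/3$ covering argument by hand where the paper simply cites the standard equicontinuity-plus-pointwise-convergence fact. Your explicit remark that $f_z$ may fail to be in $\sC_0(\bbR)$ when $z$ is real but nonetheless agrees with such a function on all relevant supports is actually slightly more careful than the paper's phrasing.
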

\begin{proof}
Since 
the real and imaginary parts of the integrand in \eqref{ECatDef} belong to $\sC_0(\bbR)$, it is immediate from the definition of vague convergence that $\bCat{\mu_n}{z} \rightarrow \bCat{\mu}{z}$ as $n \rightarrow \infty$ pointwise for $z \in \bbC \smallsetminus S$. To upgrade to uniform convergence, first pick a constant $C > 0$ as in Lemma \ref{LVagConBnd} such that $\mu_n(\bbR) \le C$ for $n \in \bbZ_{>0}$. Also, since $K$ is compact, $S$ is closed and $K \cap S = \emptyset$, there exists $\delta > 0$ such that $|z-x| \ge \delta$ for $z \in K$ and $x \in S$. 
Hence, it follows from \eqref{ELip} that 
\begin{align*}
|\bCat{\mu_n}{z}-\bCat{\mu_n}{w}| \le \frac{C}{\delta^2}|z-w| \quad \text{ for } z, w \in K \text{ and } n \in \bbZ_{>0}. 
\end{align*}
In particular, the sequence of functions $(\Cat_{\mu_n})_{n \in \bbZ_{>0}}$ is equicontinuous on $K$. 
This property together with the pointwise convergence on the compact set $K$ implies that $\bCat{\mu_n}{z} \rightarrow \bCat{\mu}{z}$ uniformly in $z \in K$ as $n \rightarrow \infty$ \cite[Chapter 7]{rudi-pma}. 
\end{proof}

\subsection{Hausdorff metric}
\label{SsHaus}
Let $(X, d)$ be a metric space. For $\epsilon > 0$ and $A \subset X$, the $\epsilon$\emph{-fattening} of $A$ is 
$A^{\epsilon} = \{x \in X: d(x, y) < \epsilon \text{ for some } y \in A\}$. Let $\sX$ be the space of nonempty, bounded, closed subsets of $X$. The Hausdorff metric (\cite[p.\ 280]{munk}) on $\sX$ is defined by 
\begin{align*}
\cHDis{A}{B} = \inf \{\epsilon > 0: A \subset B^\epsilon \text{ and } B \subset A^{\epsilon}\} \quad \text{ for } A, B \in \sX. 
\end{align*}

\bibliographystyle{plain}
\bibliography{timorefs}

\end{document}